\documentclass[preprint,12pt]{article}
\usepackage[top=1.1in, bottom=1.2in, left=0.8in, right=0.8in]{geometry}
\usepackage{amsmath,amsfonts,amssymb,amsthm}
\usepackage{mathrsfs,dsfont}
\usepackage{graphicx}
\usepackage{colortbl,dcolumn}
\usepackage{psfrag}
\usepackage{booktabs}
\usepackage{cite}
\usepackage{url}
\usepackage{comment}
\usepackage{subfigure}
\usepackage{hyperref}
\allowdisplaybreaks[4]
\numberwithin{equation}{section}

\newcommand{\R}{\mathbb{R}}

\newcommand{\diff}{{\,\rm{d}}}

\newtheorem{theorem}{Theorem}[section]

\newtheorem{lemma}[theorem]{Lemma}
\newtheorem{proposition}[theorem]{Proposition}
\newtheorem{corollary}[theorem]{Corollary}
\newtheorem{remark}[theorem]{Remark}

\newtheorem{assumption}[theorem]{Assumption}

\begin{document}
\title{Strong convergence rates of stochastic theta Milstein methods for index-1 stochastic differential algebraic equations under non-globally Lipschitz conditions\footnotemark[1]}

\footnotetext{\footnotemark[1] This work was supported by National Natural Science Foundation of China (Nos. 11961029, 12671472), Yunnan Fundamental Research Projects (No. 202601AT070161) and Jiangxi Provincial Natural Science Foundation (No. 20242BAB23004).}

\author{Caiyuan Zhu\footnotemark[2], \quad
Shiye Wan\footnotemark[2], \quad
Ziheng Chen\footnotemark[2],\quad
Lin Chen\footnotemark[3]}

\footnotetext{\footnotemark[2] School of Mathematics and Statistics, Yunnan University,
Kunming, Yunnan, 650500, China.}

%

\footnotetext{\footnotemark[3] School of Statistics and Data Science, Jiangxi University of Finance and Economics, Nanchang, 330013, China. Email: chenlin@jxufe.edu.cn. Corresponding author.}

\date{}

\maketitle

\begin{abstract}
      {\rm\small This paper studies the strong convergence order of structure-preserving stochastic theta Milstein methods for a class of index-$1$ stochastic differential algebraic equations (SDAEs) with time-dependent singular matrices and non-globally Lipschitz coefficients. The singular matrix is allowed to vary in time while preserving a fixed differential algebraic splitting, and the drift and diffusion coefficients may exhibit superlinear growth. By exploiting the index-$1$ algebraic-differential decomposition of the exact solution, we identify the Milstein coefficient of the reduced stochastic differential equation directly in the original SDAE variables and establish the well-posedness and constraint preserving property of the proposed method for $\theta\in[1/2,1]$. Under a coupled monotonicity condition and suitable polynomial regularity assumptions, the method is proved to preserve the algebraic constraints at all time levels and to converge with strong order one in the root mean square norm. Numerical experiments confirm the structure-preserving property and the theoretical convergence order.} \\

      \textbf{AMS subject classification: }
      {\rm\small 60H10, 65C20, 65L20}\\

      \textbf{Key Words: }{\rm\small Stochastic differential algebraic equations; Coupled monotonicity conditions; Superlinearly growing coefficients; Stochastic theta Milstein methods; Strong convergence rate}
\end{abstract}

\section{Introduction}

Stochastic differential equations (SDEs) provide a standard framework for modelling dynamical systems driven by random perturbations; see, for example, \cite{MR2295424,MR2380366,MR2723480}. In many applications, however, the state is also required to satisfy conservation, geometric, or other algebraic constraints. Combining these constraints with stochastic differential dynamics leads to stochastic differential algebraic equations (SDAEs) \cite{MR2450755,suthar2023explicitlyconstrainedstochasticdifferential}. The algebraic component changes both the analytical and numerical problems: a numerical method must approximate the stochastic dynamics and maintain compatibility with the constraint. General formulations, properties, and applications of SDAEs can be found in \cite{MR2990582,MR4839307,MR1998346,MR2683311,MR2266721,MR3241007} and the references therein.

In this paper, we consider the nonlinear SDAE
\begin{equation}\label{eq:SDAE}
      A_t \diff{X_t} 
      = 
      F(t,X_t) \diff{t}+G(t,X_t) \diff{W_t},
      \quad t \in (0,T]
\end{equation}
with a deterministic initial value $X_0$. Here, $\{W_t\}_{t\in[0,T]}$ is an $m$-dimensional standard Brownian motion, and $F\colon[0,T]\times\mathbb R^d\to\mathbb R^d$ and $G\colon[0,T]\times\mathbb R^d\to\mathbb R^{d\times m}$ are the drift and diffusion coefficients. For every $t\in[0,T]$, the matrix $A_t\in\mathbb R^{d\times d}$ is singular. Its singular value decomposition is assumed to have fixed left and right singular subspaces. Consequently, the projectors $P=A_t^-A_t$ and $R=I-A_tA_t^-$ are time independent, where $A_t^-$ denotes the Moore--Penrose inverse. The analysis is further carried out under the index-$1$ conditions that the noise has no algebraic component and that the algebraic variables are globally and uniquely determined; the precise conditions are stated in Assumptions \ref{asm:At} and \ref{ass:index1}.

Classical numerical studies of SDAEs include methods for systems with a constant singular matrix \cite{MR1998346,MR1658742,MR3979263,MR2931358,MR3373557}. The convergence theory has subsequently been extended to non-globally Lipschitz coefficients: in particular, stochastic theta methods for index-$1$ SDAEs with time-dependent singular matrices and non-globally Lipschitz coefficients achieve strong order $1/2$ \cite{chen2025strong}. For unconstrained SDEs, classical higher-order implicit and stability-oriented Milstein-type methods have been studied in \cite{MR1149488,MR3920055,MR3857916}. Milstein-type schemes for locally or non-globally Lipschitz and highly nonlinear coefficients were developed in \cite{MR3037286,MR3082312,MR3574237}. In particular, strong order one in the root mean square norm was established for implicit Milstein-type methods with non-globally Lipschitz drift and diffusion coefficients in \cite{MR4601143}. These results do not, however, directly show that a Milstein correction formulated in the original SDAE variables is compatible with the algebraic constraint or that it yields strong order one for time-dependent singular matrices with superlinearly growing coefficients. This question is also relevant to multilevel Monte Carlo simulation, whose efficiency depends on the strong accuracy of the underlying path approximation \cite{MR2479233}.

Let $\Delta=T/K$ for $K\in\mathbb N$ and $t_k=k\Delta$. In this work, for $\theta\in[\frac12,1]$, we propose the stochastic theta Milstein method defined by $x_0=X_0$ and
\begin{align}\label{eq:ABEM}
      A_{t_{k}}x_{k+1}
             =&~
       A_{t_k}x_k +\theta \big(R+A_{t_k}A_{t_{k+1}}^-\big)F(t_{k+1},x_{k+1}) \Delta
      +
      (1-\theta) F(t_{k},x_{k}) \Delta \notag
      \\&~+
      G(t_k,x_k) \Delta W_{k}
      +
      \sum_{j_1,j_2=1}^m \widetilde{\mathcal{L}}^{j_1} G_{j_2}(t_k, x_k)  I_{j_1,j_2}^{t_k,t_{k+1}}
\end{align}
for $k=0,1,\cdots,K-1$, where $\Delta W_k=W_{t_{k+1}}-W_{t_k}$ and $G_j(t,x)$ is the $j$th column of $G(t,x)$. With $J(t,x):=A_t+R\partial_xF(t,x)$, the SDAE-level Milstein coefficient is
\begin{equation*}
      \widetilde{\mathcal{L}}^{j_1}G_{j_2}(t,x) := \partial_xG_{j_2}(t,x)J(t,x)^{-1}G_{j_1}(t,x), \quad j_1,j_2=1,2,\cdots,m,
\end{equation*}
and
\begin{equation*}
      I_{j_1, j_2}^{t_k, t_{k+1}} := \int_{t_k}^{t_{k+1}}\int_{t_k}^{s_2} \diff W_{s_1}^{j_1}\,\diff W_{s_2}^{j_2}.
\end{equation*}
The explicit case $\theta=0$ is not considered, since the singularity of $A_{t_k}$ prevents an explicit update from uniquely determining the algebraic component of $x_{k+1}$, and the constraint at the new time level is not automatically enforced. Thus an implicit contribution with $\theta>0$ is essential for both well-posedness and constraint preservation. The further restriction $\theta\geq1/2$ is used in the mean square stability argument, where it makes the explicit drift-square contribution nonpositive. A central ingredient of our analysis is the index-$1$ algebraic-differential decomposition. At the continuous level, the exact solution can be written as $X_t=U_t+\hat V(t,U_t)$, where $\hat V$ is the algebraic reconstruction map and the differential component $U_t=PX_t$ satisfies an inherent SDE. At the discrete level, however, we do not discretize this inherent SDE directly. Instead, the stochastic theta Milstein method is formulated in the original SDAE variables, and we extract the induced Milstein dynamics of its differential component. The factor $R+A_{t_k}A_{t_{k+1}}^-$ produces the correct new-time implicit drift after projection, while the SDAE-level Milstein term induces precisely the Milstein correction of the inherent SDE. This viewpoint connects the constraint preserving SDAE discretization with the strong error analysis of the induced differential approximation.

Since $A_{t_k}$ is singular, neither the solvability of the implicit update nor the preservation of the algebraic constraint is automatic. Using the fixed projector structure and the reconstruction map, we reduce the SDAE-level update to a monotone implicit equation for the differential component $u_{k+1}=Px_{k+1}$ and then recover the algebraic component as $\hat V(t_{k+1},u_{k+1})$. This yields a unique adapted numerical solution for sufficiently small stepsizes and ensures that $x_k$ lies on the constraint manifold at every time level. To establish the strong convergence order, we derive an error-reduction estimate that bounds the global mean square error in terms of the local residual and its conditional expectation. The local residual is analyzed through drift and diffusion expansions of the inherent SDE, with the iterated stochastic integrals cancelling the leading diffusion term. Polynomial moment bounds, H\"{o}lder continuity and derivative estimates then give
\begin{equation*}
      \mathbb E[|\mathcal R_k|^2]\leq C\Delta^3, \quad \mathbb E\big[|\mathbb E[\mathcal R_k\mid\mathcal F_{t_{k-1}}]|^2\big] \leq C\Delta^4.
\end{equation*}
Combining these local estimates with the error-reduction argument yields strong convergence of order one in the root mean square norm.

The main contributions of this paper are summarized as follows. First, we formulate the stochastic theta Milstein method directly in the original SDAE variables and identify its Milstein correction with that of the inherent SDE. Second, we prove that the proposed method is well posed for sufficiently small stepsizes and preserves the algebraic constraint at every time level. Third, by combining an error-reduction argument with local Milstein expansions and polynomial moment estimates, we establish strong convergence of order one under a coupled monotonicity condition and polynomial regularity assumptions. Numerical experiments confirm both the theoretical convergence order and the constraint preserving property.

The rest of this paper is organized as follows. Section \ref{SDAEs} introduces the index-$1$ structure, the algebraic reconstruction, the reduced SDE, and the regularity estimates used in the analysis. Section \ref{sec:orderAt} proves the well-posedness and constraint preservation of the stochastic theta Milstein method, derives the local residual bounds, and establishes the global convergence rate. Finally, Section \ref{sec:experiments} presents numerical experiments for the convergence order and the constraint preserving property.

\section{Index-1 SDAE setting and reduced SDE}\label{SDAEs}

In this section, we introduce the structural assumptions on the index-$1$ SDAE \eqref{eq:SDAE} and derive the associated reduced dynamics for its differential component. We begin with some notation.  Let $\langle \cdot,\cdot \rangle$ and $|\cdot|$ denote the Euclidean inner product and norm in $\R^d$, respectively. For any matrix $B\in\R^{d\times m}$, $B^\top$ denotes its transpose, $|B|:=\sqrt{\operatorname{trace}(B^\top B)}$ its Frobenius norm, $\operatorname{Im}(B):=\{Bx:x\in\R^m\}$ its image, and $\operatorname{Ker}(B):=\{x\in\R^m:Bx=0\}$ its kernel. For a differentiable vector-valued function $H$, $\partial_xH$ denotes its Jacobian matrix with respect to $x$; analogous notation is used for other state variables. Moreover, the letter $C$ denotes a generic positive constant whose value may change from line to line but is independent of the stepsize $\Delta$.

\subsection{Matrix structure and constraint manifold}

We first make the following assumption on the singular matrix $A_{t}$ for $t \in [0,T]$.

\begin{assumption}\label{asm:At}
      For any $t \in [0,T]$, the singular value decomposition of matrix $A_t$ takes the form $A_t = M\Sigma_t N$, where $M,N \in \R^{d \times d}$ are orthogonal matrices and $\Sigma_t = \operatorname{diag}\bigl(\sigma_1(t), \cdots, \sigma_r(t), 0, \cdots, 0\bigr)$ is continuously differentiable with $r \in \{1,2,\cdots,d-1\}$. Moreover, for every $i \in \{1,2,\cdots,r\}$, $\sigma_i \in C^2([0,T];\R)$ and there exist constants $\underline\sigma, \overline\sigma, C_\sigma>0$ such that
      \begin{align*}
            0 < \underline{\sigma}
            \leq
			\sigma_{i}(t)
			\leq
			\overline{\sigma} < \infty,
            \quad
            |\sigma_i'(t)|+|\sigma_i''(t)|
            \leq C_\sigma,
            \quad t \in [0,T].
     \end{align*}
\end{assumption}

The Moore--Penrose inverse of any real matrix exists and is unique; see, e.g., \cite{MR4759037}. We denote by $A_t^-$ the Moore--Penrose inverse of $A_t$. Assumption \ref{asm:At} implies
\begin{equation*}
      A_t^-=N^\top\Sigma_t^-M^\top, \quad \Sigma_t^-=\operatorname{diag}\bigl(\sigma_1(t)^{-1},\cdots, \sigma_r(t)^{-1},0,\cdots,0\bigr).
\end{equation*}
Consequently, the time independent operator
\begin{equation*}
      P:=A_t^-A_t
      =N^\top
      \begin{pmatrix}I_r&0\\0&\mathbf 0_{d-r}\end{pmatrix}N
\end{equation*}
is the orthogonal projector onto the differential subspace. The complementary projector $Q:=I-P$ projects onto $\operatorname{Ker}(A_t)$ and satisfies $A_tQ=0$. Moreover, $R:=I-A_tA_t^-$ is the orthogonal projector onto the left null space of $A_t$ and satisfies $RA_t=A_t^-R=0$. These projectors enable us to split the solution of \eqref{eq:SDAE} into differential and algebraic components:
\begin{equation}\label{eq:PQ}
      X_t = PX_t+QX_t := U_t+V_t,
      \quad
      U_t \in \operatorname{Im}(P), \ V_t\in \operatorname{Im}(Q).
\end{equation}

\begin{assumption}\label{ass:index1}
      Let the initial value $X_0$ be deterministic and satisfy $RF(0,X_0) = 0$.
      Besides, the algebraic Jacobian $J(t,x) := A_t+R \partial_xF(t,x)$ is invertible for each $(t,x) \in [0,T] \times \R^{d}$ and there exists a constant $L_{J} > 0$ such that $\sup_{(t,x) \in [0,T] \times \R^{d}} |J(t,x)^{-1}| \leq L_{J}$. Moreover, the diffusion coefficient is compatible with the algebraic constraint, namely,
            \begin{equation} \label{eq:noise-compatibility}
            RG(t,x)=0, \quad (t,x)\in[0,T]\times\mathbb R^d. \end{equation}
            
\end{assumption}

For every $(t,x)\in[0,T]\times\R^d$, the algebraic equation
\begin{equation}\label{eq:v(t,x)}
      A_tv+RF(t,x+v)=0
\end{equation}
admits a unique solution $v=\hat V(t,x)$; see, e.g., \cite{MR4915812,MR1998346}.

\subsection{Assumptions on non-globally Lipschitz coefficients}

We next state the coefficient conditions used for \eqref{eq:SDAE}. Let $F\colon[0,T]\times\R^d\to\R^d$ and $G=(G_1,\ldots,G_m)\colon[0,T]\times\R^d\to\R^{d\times m}$ be the drift and diffusion coefficients, respectively, where $G_j(t,x)\in\R^d$ denotes the $j$-th column of $G(t,x)$ for $j=1,\cdots,m$.

\begin{assumption}\label{asm:FG}
      There exist constants $q>1$, $p_1>8$, $L_1>0$, and $\Delta_0\in(0,T]$ such that 
      for all $x,y \in \mathbb{R}^{d}$ and $s,t \in [0,T]$, 
      \begin{align}\label{asm:FG-FG}
            &~2\big\langle Px-Py, 
			A_t^- F(t,x) - A_t^- F(t,y) \big\rangle 
			+
			(p_1-1)|A_t^- G(t,x) - A_t^- G(t,y)|^2 \notag
			\\&~+
            \frac{q}{2}\Delta_0\sum_{j_1,j_2=1}^m 
            \big|A_t^-\widetilde{\mathcal{L}}^{j_1}G_{j_2}(t,x) 
            - A_t^-\widetilde{\mathcal{L}}^{j_1}G_{j_2}(t,y)\big|^2
            \leq 
			L_1|x-y|^2.
      \end{align}
\end{assumption}

\begin{assumption}\label{ass:time-regularity}
      Assume that $F$ and $G_j$, $j=1,\cdots,m$, belong to $C^{1,2}([0,T]\times\mathbb R^d)$.
      There exists constants $\gamma\in [1,\tfrac{p_1+2}{10})$ and $C > 0$, such that for all $x,y\in\mathbb R^d$ and $s,t\in[0,T]$, 
      \begin{equation}\label{asm:FG-absIa}
			\left|  \partial_xF(t,x) - 
            \partial_xF(s,y)  \right| 
            \leq C\big((1+|x|+| y|)^{\gamma-2} | x 
            - y |+(1+|x|+|y|)^{\gamma}|t-s|\big), 
      \end{equation}
      \begin{equation}\label{asm:FG-absJa}
			\left| \partial_tF(t,x) - 
            \partial_tF(s,y) \right| 
            \leq C\big((1+|x|+|y|)^{\gamma-1}
            |x-y|
            +
            (1+|x|+|y|)^{\gamma}|t-s|\big),
      \end{equation}
      \begin{equation}\label{asm:FG-absKa}
			\left|  \partial_xG_j(t,x) 
            - \partial_xG_j(s,y)\right|^2
			\leq C\big((1+|x|+|y|)^{\gamma-3} |x -y|^2+(1+|x|+|y|)^{\gamma+1}|t-s|^2\big), 
      \end{equation}
      \begin{equation}\label{asm:FG-absMa}
			\left|\partial_tG_j(t,x) 
            - \partial_tG_j(s,y)\right|^2 
			\leq C\big((1+|x|+|y|)^{\gamma-1} | x 
			- y |^2+(1+|x|+|y|)^{\gamma+1}|t-s|^2\big).
      \end{equation}
\end{assumption}

\begin{remark}\label{rm:FG}
Let $t=s$ and $y=0$, it follows from Assumption \ref{ass:time-regularity} that
      \begin{equation}\label{asm:dF}
			|\partial_x F(t,x)|
			\leq
			C (1+|x|)^{\gamma-1},  \ \ \ 
    		|\partial_t F(t,x)|
			\leq
			C (1+|x|)^{\gamma}        
      \end{equation}
      \begin{equation}\label{asm:dG}
			|\partial_x G(t,x)|^2
			\leq
			C (1+|x|)^{\gamma-1},  \ \ \ 
    		|\partial_t G(t,x)|^2
			\leq
			C (1+|x|)^{\gamma+1} .
      \end{equation}
      Moreover the mean value theorem yields that
      \begin{equation}\label{asm:FG-absFa}
			|F(t,x)-F(s,y)|
			\leq
			C\big((1+|x|+|y|)^{\gamma-1} |x-y|
			+ (1+|x|+|y|)^{\gamma} |t-s|\big),
      \end{equation}
      \begin{equation}\label{asm:FG-absG}
			|G(t,x)-G(s,y)|^{2}
			\leq
			C\big((1+|x|+|y|)^{\gamma-1} |x-y|^{2}
			+ (1+|x|+|y|)^{\gamma+1} |t-s|^2\big),
      \end{equation}
      \begin{equation}\label{asm:bound_FG}
			|F(t,x)|
			\leq
			C (1+|x|)^{\gamma},
            \ \ \ 
    		|G(t,x)|^2
			\leq
			C (1+|x|)^{\gamma+1} .       
      \end{equation}
\end{remark}     

We first record the existence, uniqueness, and constraint property of the exact solution.

\begin{proposition}\label{prop:exact-solution-constraint}
Suppose that Assumptions \ref{asm:At}, \ref{ass:index1}, \ref{asm:FG} and \ref{ass:time-regularity} hold. Then the SDAE \eqref{eq:SDAE} admits a unique continuous adapted solution $\{X_t\}_{t\in[0,T]}$, which satisfies
      \begin{equation}\label{eq:sdae_integral}
            \int_{0}^{t} A_s\,dX_{s}
            =
            \int_{0}^{t} F(s,X_{s})\,ds
            +
            \int_{0}^{t} G(s,X_{s})\,dW_s,
            \quad t \in [0,T].
      \end{equation}
      Moreover, the exact solution $\{X_{t}\}_{t \in [0,T]}$ satisfies the algebraic constraint
      \begin{equation*}
            X_{t} \in \mathcal{M}_t, \quad t \in [0,T], \quad \mathbb{P}\text{\rm{-a.s.}},
      \end{equation*}
      where $\mathcal{M}_{t} := \{x \in \R^{d} : RF(t,x) = 0\}$ for each $t \in [0,T]$.  
\end{proposition}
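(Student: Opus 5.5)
The plan is to reduce \eqref{eq:SDAE} to an SDE for the differential component $U_t=PX_t$, solve that SDE with a monotonicity argument, and then recover $X_t$ through the reconstruction map $\hat V$. Since $P$ and $R$ do not depend on time, multiplying \eqref{eq:SDAE} by $A_t^-$ and by $R$ splits it into two parts. For the differential part, $A_t^-A_t=P$, so formally
\begin{equation*}
      P\,dX_t = A_t^-F(t,X_t)\,dt + A_t^-G(t,X_t)\,dW_t .
\end{equation*}
For the algebraic part, $RA_t=0$ and $RG=0$ by \eqref{eq:noise-compatibility}, so $0=RF(t,X_t)\,dt$, i.e.\ $RF(t,X_t)=0$ for all $t$. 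One caveat concerns the meaning of $\int_0^t A_s\,dX_s$ when $A_s$ varies in time. I take it in the Itô sense, with $A_s$ deterministic and $C^1$. Because $A_s=A_sP$ and $PN^\top$ has a fixed range, $\int A_s\,dX_s=\int A_s\,dU_s$, so only $U$ needs to be a semimartingale.

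The first step is regularity of $\hat V$. For fixed $t$ and $u\in\operatorname{Im}(P)$, $\hat V(t,u)\in\operatorname{Im}(Q)$ is the unique solution of \eqref{eq:v(t,x)}. I would apply the implicit function theorem to $\Phi(t,u,v)=A_tv+RF(t,u+v)$. Its $v$-derivative restricted to $\operatorname{Im}(Q)$ is $A_tQ+R\partial_xF\,Q=J(t,u+v)Q$, and this is injective on $\operatorname{Im}(Q)$ since $J$ is invertible. The map $\Phi$ sends $\operatorname{Im}(Q)$ into $\operatorname{Im}(R)$ (using $A_tQ=0$), and these spaces have equal dimension $d-r$. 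Hence $\hat V\in C^{1,2}$, and the uniform bound $|J^{-1}|\le L_J$ gives global bounds $|\partial_u\hat V|\le C(1+|u|+|\hat V|)^{\gamma-1}$. I would also show $|\hat V(t,u)|\le C(1+|u|)^{\kappa}$ for some polynomial power $\kappa$. One route is to integrate the derivative along $su$, $s\in[0,1]$, and bound $\hat V(t,0)$ via continuity in $t$. Another is to observe that $\hat V(t,u)$ equals $-QJ^{-1}$-type expressions evaluated at intermediate points, which gives a linear bound $|\hat V(t,u)|\le C(1+|u|)$ directly from $|J^{-1}|\le L_J$ and the mean value theorem applied to $\Phi(t,u,\cdot)$ between $0$ and $\hat V$. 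I expect the second route to work and would use it.

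Next I define the reduced SDE
\begin{equation*}
      dU_t = f(t,U_t)\,dt + g(t,U_t)\,dW_t, \qquad U_0=PX_0,
\end{equation*}
with $f(t,u)=A_t^-F(t,u+\hat V(t,u))$ and $g(t,u)=A_t^-G(t,u+\hat V(t,u))$. These coefficients are locally Lipschitz by the regularity of $\hat V$, so a unique local strong solution exists up to explosion. To make the solution global I need a one-sided growth bound. Taking $x=u+\hat V(t,u)$ and $y=\hat V(t,0)$ in \eqref{asm:FG-FG}, and using $Px-Py=u$ together with $p_1>8$, gives
\begin{equation*}
      2\langle u,f(t,u)\rangle+(p_1-1)|g(t,u)|^2\le C(1+|u|^2).
\end{equation*}
This requires controlling $|x-y|\le C|u|$, which follows from the Lipschitz-type bound on $\hat V$. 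A Lyapunov/Khasminskii argument with $V(u)=1+|u|^2$ then rules out explosion, and the same bound gives uniqueness.

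Finally, I set $X_t=U_t+\hat V(t,U_t)$, which is continuous and adapted. Then $PX_t=U_t$ and $RF(t,X_t)=0$ by the definition of $\hat V$, so $X_t\in\mathcal M_t$. To verify \eqref{eq:sdae_integral}, I would compute $A_t\,dU_t$ by multiplying the reduced SDE by $A_t$. This uses $A_tA_t^-=I-R$, the constraint $RF=0$ and $RG=0$, and returns exactly $F\,dt+G\,dW$.

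For uniqueness, take any solution $\tilde X$. Applying $R$ to the equation forces $RF(t,\tilde X_t)=0$ a.s. The unique solvability of \eqref{eq:v(t,x)} then gives $Q\tilde X_t=\hat V(t,P\tilde X_t)$. Applying $A_t^-$ shows that $P\tilde X$ solves the reduced SDE, so $P\tilde X=U$ and hence $\tilde X=X$.

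I expect the main obstacle to be the non-explosion step, specifically transferring the coupled monotonicity \eqref{asm:FG-FG}, which is stated in $x$-variables, into a coercivity bound for $(f,g)$ in $u$. This needs the global linear growth of $\hat V$, and if that growth is only polynomial the argument needs more care. I would try the mean value argument based on $J^{-1}$ first.
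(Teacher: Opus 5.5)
Your route differs from the paper's. The paper does not construct the solution. It takes $y=0$ in \eqref{asm:FG-FG}, derives $\langle Px,A_t^-F(t,x)\rangle+\frac12|A_t^-G(t,x)|^2\le C(1+|x|^2)$ directly in the $x$-variables, and cites \cite[Theorem 1]{MR4915812} for existence and uniqueness. It then obtains the constraint by applying $R$ to the integral equation and using path continuity.

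Your self-contained plan (inherent SDE for $U$, Khasminskii, reconstruction, uniqueness via $R$ and $A_t^-$) is sound in outline. However, it has a genuine gap at exactly the step you flag: the global Lipschitz bound for $\hat V(t,\cdot)$, on which your coercivity estimate $|x-y|\le C|u|$ rests. Your preferred mean value argument in $v$ gives
\begin{equation*}
\Bigl(\int_0^1 J\bigl(t,u+s\hat V(t,u)\bigr)\,\diff s\Bigr)\,\hat V(t,u)=-RF(t,u).
\end{equation*}
Even if the averaged Jacobian had an inverse bounded by $L_J$ (which the pointwise bound on $J^{-1}$ does not guarantee), you would only get $|\hat V(t,u)|\le L_J|RF(t,u)|\le C(1+|u|)^{\gamma}$. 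This is because $RF$ is evaluated at $u\notin\mathcal M_t$ and is superlinear in general. For instance, take $A_t=\operatorname{diag}(1+t,2+t,0)$ and $RF(t,x)=h(x_1+x_3)e_3$ with $h(z)=z^3+z$. Then $J^{-1}$ is uniformly bounded because $h'\ge 1$, and $\hat V(t,u)=-u_1e_3$ on $\operatorname{Im}(P)$, yet $RF(t,u)=h(u_1)e_3$ is cubic. Your first route, fed with $|\partial_u\hat V|\le C(1+|u|+|\hat V|)^{\gamma-1}$, does not yield linear growth either. With only $|x-y|\lesssim(1+|u|)^{\gamma}$, the coercivity bound degenerates to $C(1+|u|^{2\gamma})$, and the quadratic Lyapunov function no longer rules out explosion.

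The missing idea is the identity $R\partial_xF=J-A_t$. Differentiating \eqref{eq:v(t,x)} in $u$ then gives
\begin{equation*}
\partial_u\hat V(t,u)=-J(t,x)^{-1}R\partial_xF(t,x)=J(t,x)^{-1}A_t-I_d,\quad x:=u+\hat V(t,u),
\end{equation*}
as in \eqref{eq:DuV-DuPsi}. Its norm is at most $L_J\sup_t|A_t|+\sqrt d\le\hat L$ uniformly in $(t,u)$, even though $\partial_xF$ itself grows. So $\hat V(t,\cdot)$ is globally $\hat L$-Lipschitz; this is \eqref{lem-v-rs2} with $s=t$. With $y=\hat V(t,0)$ you get $|x-y|\le(1+\hat L)|u|$, and your coercivity and non-explosion steps go through.

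Three smaller repairs are needed:
\begin{enumerate}
\item After separating $g(t,u)$ from $A_t^-G(t,y)$ by Young's inequality, you keep $(p-1)|g|^2$ only for $p<p_1$, not $p=p_1$. Coefficient $1$ suffices for non-explosion, and $p_1>8$ plays no role here.
\item Matching the initial value, $PX_0+\hat V(0,PX_0)=X_0$, uses the consistency condition $RF(0,X_0)=0$.
\item In the uniqueness step, applying $R$ only yields $\int_0^tRF(s,\tilde X_s)\,\diff s=0$. You need the continuity argument of the paper to get $RF(t,\tilde X_t)=0$ for every $t$.
\end{enumerate}

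Once repaired, your argument no longer depends on the hypotheses of the cited SDAE theorem, and the constraint holds automatically for the constructed solution. The price is redoing the reduction that the paper outsources.
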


\begin{proof}
      From \eqref{asm:FG-absFa} and \eqref{asm:FG-absG}, $F$ and $G$ are locally Lipschitz in $x$, uniformly in $t\in[0,T]$. Moreover, setting $y=0$ in \eqref{asm:FG-FG} and discarding the nonnegative Milstein term give
      \begin{align*}
            &\big\langle Px,A_t^-F(t,x)-A_t^-F(t,0)\big\rangle
            +\frac{p_1-1}{2}\big|A_t^-G(t,x)-A_t^-G(t,0)\big|^2
            \leq \frac{L_1}{2}|x|^2.
      \end{align*}
      Since $p_1>2$, the weighted Young inequality with $\varepsilon = p_1-2 > 0$ together with $|Px|\leq|x|$ yields
      \begin{align*}
            &\big\langle Px,A_t^-F(t,x)\big\rangle
            +\frac12\big|A_t^-G(t,x)\big|^2
            \\
            =&~
            \big\langle Px,A_t^-F(t,x)-A_t^-F(t,0)\big\rangle
            +\big\langle Px,A_t^-F(t,0)\big\rangle
            \\
            &~+\frac12\big|
            A_t^-G(t,x)-A_t^-G(t,0)+A_t^-G(t,0)
            \big|^2
            \\
            \leq&~
            \big\langle Px,A_t^-F(t,x)-A_t^-F(t,0)\big\rangle
            +\frac{p_1-1}{2}
            \big|A_t^-G(t,x)-A_t^-G(t,0)\big|^2
            \\
            &~+\frac12|x|^2
            +\frac12|A_t^-F(t,0)|^2
            +\frac{p_1-1}{2(p_1-2)}|A_t^-G(t,0)|^2
            \\
            \leq&~
            \frac{L_1+1}{2}|x|^2
            +\frac12|A_t^-F(t,0)|^2
            +\frac{p_1-1}{2(p_1-2)}|A_t^-G(t,0)|^2.
      \end{align*}
      Assumption \ref{asm:At} implies that $A_t^-$ is uniformly bounded on $[0,T]$. Moreover, Assumption \ref{ass:time-regularity} and the compactness of $[0,T]$ imply that $F(t,0)$ and $G(t,0)$ are uniformly bounded. Consequently,
      \begin{equation}\label{eq:exact-solution-coercivity}
            \big\langle Px,A_t^-F(t,x)\big\rangle
            +\frac12\big|A_t^-G(t,x)\big|^2
            \leq C(1+|x|^2).
      \end{equation}
      Assumption \ref{asm:At} also ensures that $A_t$ is continuously differentiable and that $P=A_t^-A_t$ is constant. Assumption \ref{ass:index1} gives the index-$1$ conditions, the continuity and uniform invertibility of $J$, and the noise compatibility condition $RG(t,x)=0$, which is equivalent to $\operatorname{Im}(G(t,x))\subseteq\operatorname{Im}(A_t)$. Finally, the deterministic initial value $X_0$ belongs to $L^2(\Omega;\mathbb R^d)$. Consequently, \eqref{eq:exact-solution-coercivity} and \cite[Theorem 1]{MR4915812} imply that \eqref{eq:SDAE} admits a unique continuous adapted solution.

      It remains to verify the constraint property. Writing \eqref{eq:SDAE} in integral form, applying the time independent projector $R$, and using $RA_s=0$ and $RG(s,X_s)=0$, we obtain, on a set of probability one,
      \begin{equation}\label{eq:constraint-integral-zero}
      \int_0^t RF(s,X_s)\,\diff s=0, \quad t\in[0,T].
      \end{equation}
      For every sample point in this set, the integrand $t\mapsto RF(t,X_t)$ is continuous because $X$ has continuous sample paths and $F$ is continuous. By \eqref{eq:constraint-integral-zero} and the fundamental theorem of calculus,
      \begin{equation*}
      RF(t,X_t)=0, \quad t\in[0,T].
      \end{equation*}
      Consequently, $X_t\in\mathcal M_t$ for every $t\in[0,T]$ almost surely.
\end{proof}

\subsection{Algebraic reconstruction and reduced SDE}

By Proposition \ref{prop:exact-solution-constraint}, the exact solution belongs to $\mathcal M_t$. Since $X_t=U_t+V_t$ and $A_tV_t=A_tQX_t=0$, the defining constraint of $\mathcal M_t$ is equivalently written as
\begin{equation}\label{eq:constraint}
      A_tV_t+RF(t,U_t+V_t)=0,
      \quad t\in[0,T].
\end{equation}
Equation \eqref{eq:v(t,x)} defines the algebraic reconstruction map $(t,x)\mapsto\hat V(t,x)$. Multiplying \eqref{eq:v(t,x)} by $A_t^-$ and using $A_t^-R=0$, we obtain $P\hat V(t,x) = 0$, i.e., $\hat V(t,x) \in \operatorname{Im}(Q)$. We next establish the reconstruction formula \eqref{def:Qx}. Since $A_tQx=0$, for any $x\in\R^d$ and $t\in[0,T]$,
\begin{align*}
      A_t\big(Qx+\hat V(t,x)\big)
      +RF\big(t,Px+Qx+\hat V(t,x)\big)
      =A_t\hat V(t,x)+RF\big(t,x+\hat V(t,x)\big)
      =0.
\end{align*}
Thus $Qx+\hat V(t,x)$ solves \eqref{eq:v(t,x)} with $x$ replaced by $Px$. By the uniqueness of the solution to \eqref{eq:v(t,x)},
\begin{equation}\label{def:Qx}
      x=Px+Qx,
      \quad
      \hat V(t,Px)=Qx+\hat V(t,x).
\end{equation}
Moreover, $V_t=\hat V(t,U_t)$ is the unique solution of \eqref{eq:constraint}, and \eqref{eq:PQ} becomes
\begin{equation}\label{eq:XU}
      X_t=U_t+\hat V(t,U_t),
      \quad t\in[0,T].
\end{equation}
Since $P=A_t^-A_t$ is time independent under Assumption \ref{asm:At}, multiplying \eqref{eq:SDAE} by $A_t^-$ gives
\begin{equation*}
      P\,\text{d}X_t =A_t^-F(t,X_t)\,\text{d}t +A_t^-G(t,X_t)\,\text{d}W_t, \quad t\in[0,T].
\end{equation*}
Together with $U_t=PX_t$ and \eqref{eq:XU}, this yields the following unconstrained SDE for the differential component:
\begin{equation}\label{eq:inherent}
      U_t-U_0
      =
      \int_0^t A_s^-F\big(s,U_s+\hat V(s,U_s)\big)\,\text{d}s
      +
      \int_0^t A_s^-G\big(s,U_s+\hat V(s,U_s)\big)\,\text{d}W_s,
      \quad t\in[0,T].
\end{equation}
Here $U_0=PX_0$ and $RF(0,X_0)=0$. Consequently, solving \eqref{eq:SDAE} is equivalent to solving the coupled system \eqref{eq:constraint} and \eqref{eq:inherent}.

Define the reconstruction map and the reduced coefficients on the full space by
\begin{equation*}
      \Psi_t(u):=u+\hat V(t,u), \quad f(t,u):=A_t^-F\bigl(t,\Psi_t(u)\bigr), \quad g(t,u):=A_t^-G\bigl(t,\Psi_t(u)\bigr), \quad u\in\R^d.
\end{equation*}
Equation \eqref{def:Qx} implies, for every $t\in[0,T]$ and $u\in\R^d$, we have
\begin{equation}\label{eq:projection-invariant-extension}
      \Psi_t(u)=\Psi_t(Pu),
      \quad
      f(t,u)=f(t,Pu),
      \quad
      g(t,u)=g(t,Pu),
\end{equation}
and
\begin{equation}\label{eq:projection-invariant-extension2}
      Pf(t,u)=f(t,u),
      \quad
      Pg(t,u)=g(t,u).
\end{equation}
Here \eqref{eq:projection-invariant-extension} follow from $PA_t^-=A_t^-$, since $P=A_t^-A_t$ and $A_t^-A_tA_t^-=A_t^-$. Moreover, $\Psi_t(u)\in\mathcal M_t$ and $P\Psi_t(u)=Pu$. Thus $\Psi_t$, $f$, and $g$ are projection-invariant extensions from the differential subspace $\operatorname{Im}(P)$ to $\R^d$; the exact differential process itself still takes values in $\operatorname{Im}(P)$. With these definitions, \eqref{eq:inherent} can be rewritten as
\begin{equation}\label{eq:inherent_U}
      U_t-U_0
      =
      \int_0^t f(s,U_s)\,\text{d}s
      +
      \int_0^t g(s,U_s)\,\text{d}W_s,
      \quad t\in[0,T].
\end{equation}

\subsection{Properties of the algebraic map and reduced coefficients}

We next collect the estimates for the matrix family, the reconstruction map, and the reduced coefficients that will be used in the numerical analysis. The first lemma concerns uniform bounds for $A_t$, its Moore--Penrose inverse, and their time derivatives.

\begin{lemma}\label{lem:prop:A}
Suppose that Assumption \ref{asm:At} holds. Then there exist constants $C_A := \max_{t \in [0,T]} |\Sigma_t'| < \infty$ and $C_B:=\max_{t \in [0,T]} |\Sigma_t''| < \infty$ such that
      \begin{equation*}
            |A_t|
            \leq
            \sqrt{r}\,\overline{\sigma},
            \quad
            |A_t' |
            \leq
            C_A,
            \quad
            |A_t'' |
            \leq
            C_B,\quad t \in [0,T].
      \end{equation*}
Moreover, the map $t\mapsto A_t^-$ is twice differentiable, and there exists a constant $C>0$ such that
      \begin{equation*}
           |A_t^-|
            \leq
            \frac{\sqrt{r}}{\underline{\sigma}},
            \quad 
            \left| (A_t^-)' \right| \vee \left| (A_t^-)'' \right| \leq C,\quad t \in [0, T].
      \end{equation*}
\end{lemma}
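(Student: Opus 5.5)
The plan is to work entirely through the representation $A_t=M\Sigma_tN$ of Assumption \ref{asm:At}. Since $M$ and $N$ are constant orthogonal matrices, the Frobenius norm is invariant under left and right multiplication by them. Hence every quantity can be reduced to the diagonal matrix $\Sigma_t$ and its derivatives.

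First, $|A_t|=|\Sigma_t|=\bigl(\sum_{i=1}^r\sigma_i(t)^2\bigr)^{1/2}\leq\sqrt r\,\overline\sigma$. Because $M,N$ do not depend on $t$, we have $A_t'=M\Sigma_t'N$ and $A_t''=M\Sigma_t''N$. These give $|A_t'|=|\Sigma_t'|$ and $|A_t''|=|\Sigma_t''|$.

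The entries of $\Sigma_t'$ and $\Sigma_t''$ are $\sigma_i'$ and $\sigma_i''$ for $i\le r$, and zero otherwise. These entries are continuous on the compact interval $[0,T]$ because $\sigma_i\in C^2$. Therefore the maxima $C_A$ and $C_B$ are attained and finite; explicitly, each is at most $\sqrt r\,C_\sigma$ by the bound $|\sigma_i'|+|\sigma_i''|\le C_\sigma$.

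For the Moore--Penrose inverse, I use the formula $A_t^-=N^\top\Sigma_t^-M^\top$ recorded after Assumption \ref{asm:At}. Since $\sigma_i(t)\ge\underline\sigma$, this gives $|A_t^-|=|\Sigma_t^-|=\bigl(\sum_{i=1}^r\sigma_i(t)^{-2}\bigr)^{1/2}\le\sqrt r/\underline\sigma$.

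The one point needing care is differentiability. The general map $B\mapsto B^-$ is not continuous when the rank changes, so I avoid it altogether and argue entrywise. The rank is fixed at $r$ and the nonzero entries of $\Sigma_t^-$ are $1/\sigma_i(t)$, where $\sigma_i\in C^2$ and $\sigma_i\ge\underline\sigma>0$. Hence each entry is $C^2$, and so is $A_t^-$. Direct computation gives
\begin{equation*}
(1/\sigma_i)'=-\sigma_i'/\sigma_i^2,\qquad (1/\sigma_i)''=-\sigma_i''/\sigma_i^2+2(\sigma_i')^2/\sigma_i^3 .
\end{equation*}
These are bounded by $C_\sigma/\underline\sigma^2$ and $C_\sigma/\underline\sigma^2+2C_\sigma^2/\underline\sigma^3$, respectively. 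Using orthogonal invariance once more, $|(A_t^-)'|=|(\Sigma_t^-)'|$ and $|(A_t^-)''|=|(\Sigma_t^-)''|$, each at most $\sqrt r$ times these entry bounds. This yields the constant $C$, which depends only on $r$, $\underline\sigma$ and $C_\sigma$. I expect no real obstacle beyond noting that the fixed rank and the lower bound $\underline\sigma$ are what make $A_t^-$ smooth.
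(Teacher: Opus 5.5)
Your proposal is correct and follows essentially the same route as the paper: orthogonal invariance of the Frobenius norm reduces everything to $\Sigma_t$ and $\Sigma_t^-$, compactness gives $C_A,C_B$, and the $C^2$ regularity and derivative bounds for $A_t^-$ come from differentiating $1/\sigma_i$ entrywise using $\sigma_i\geq\underline\sigma>0$. The only cosmetic difference is that you express the constants for $(A_t^-)'$ and $(A_t^-)''$ via $C_\sigma$ and $\sqrt r$ (e.g.\ $\sqrt r\,(C_\sigma/\underline\sigma^2+2C_\sigma^2/\underline\sigma^3)$), whereas the paper writes them as $C_A/\underline\sigma^2$ and $2C_A^2/\underline\sigma^3+C_B/\underline\sigma^2$.
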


\begin{proof}
      Using $A_t=M\Sigma_tN$ and $A_t^-=N^\top\Sigma_t^-M^\top$, the orthogonality of $M$ and $N$ gives
      \begin{equation*}
            |A_t| = |\Sigma_t|, \quad |A_t^-| = |\Sigma_t^-|, \quad |A_t'| = |\Sigma_t'|, \quad t \in [0, T].
      \end{equation*}
      The diagonal forms of $\Sigma_t$ and $\Sigma_t^-$ imply
      \begin{equation*}
            |A_t|^2 = \sum_{i=1}^r \sigma_i(t)^2 \leq r \overline{\sigma}^2, \quad |A_t^-|^2 = \sum_{i=1}^r (\sigma_i(t))^{-2} \leq \frac{r}{\underline{\sigma}^2}.
      \end{equation*}
      Thus $|A_t|\leq\sqrt r\,\overline\sigma$ and $|A_t^-|\leq\sqrt r/\underline\sigma$. Since $t\mapsto\Sigma_t'$ and $t\mapsto\Sigma_t''$ are continuous on $[0,T]$, we also have $|A_t'|\leq C_A$ and $|A_t''|\leq C_B$. Because $\sigma_i\in C^2([0,T])$ and $\sigma_i(t)\geq\underline\sigma>0$, the reciprocal $t\mapsto\sigma_i(t)^{-1}$ belongs to $C^2([0,T])$ and satisfies
      \begin{equation*}
            \left(\frac{1}{\sigma_i}\right)'(t) = -\sigma_i'(t) \sigma_i(t)^{-2},\quad\left(\frac{1}{\sigma_i}\right)''(t) = \frac{2(\sigma_i'(t))^2 - \sigma_i(t)\sigma_i''(t)}{\sigma_i(t)^3}, \quad i = 1, \dots, r.    
      \end{equation*}
      Consequently, $t\mapsto\Sigma_t^-$ is twice differentiable and satisfies
      \begin{align*}
            &~(\Sigma_t^-)' = \operatorname{diag}\left(-\sigma_1'(t)\sigma_1(t)^{-2}, \cdots, -\sigma_r'(t)\sigma_r(t)^{-2}, 0, \cdots, 0\right), \notag \\
            &~(\Sigma_t^-)''=\operatorname{diag}\left(\frac{2(\sigma_1'(t))^2 - \sigma_1(t)\sigma_1''(t)}{\sigma_1(t)^3}, \cdots, \frac{2(\sigma_r'(t))^2 - \sigma_r(t)\sigma_r''(t)}{\sigma_r(t)^3}, 0, \cdots, 0\right).\notag 
      \end{align*}
      Using $(A_t^-)'=N^\top\bigl((\Sigma_t^-)'\bigr)M^\top$ and $(A_t^-)''=N^\top\bigl((\Sigma_t^-)''\bigr)M^\top$ yields
      \begin{gather*}
            \left| (A_t^-)' \right| = \left| (\Sigma_t^-)' \right| = \left( \sum_{i=1}^r \frac{|\sigma_i'(t)|^2}{\sigma_i(t)^4} \right)^{1/2} \leq \frac{|\Sigma_t'|}{\underline{\sigma}^2} \leq \frac{C_A}{\underline{\sigma}^2},
            \\
            \left| (A_t^-)'' \right| = \left( \sum_{i=1}^r \left|\frac{2(\sigma_i'(t))^2-\sigma_i(t)\sigma_i''(t)} {\sigma_i(t)^3}\right|^2 \right)^{1/2} \leq \frac{2C_A^2}{\underline{\sigma}^3}+\frac{C_B}{\underline{\sigma}^2}.
      \end{gather*}
      This completes the proof.
\end{proof}

\begin{lemma}\label{lem-v}
Denote $\hat{L} := \Big(\sup\limits_{t \in [0,T]}|A_t|\Big)L_{J}+d$ and let $\hat V$ be the reconstruction map defined by \eqref{eq:v(t,x)}. Suppose that Assumptions \ref{asm:At}, \ref{ass:index1}, \ref{asm:FG}, and \ref{ass:time-regularity} hold. Then $\hat V\in C^{1,2}([0,T]\times\mathbb R^d;\mathbb R^d)$, and there exists a constant $C>0$ such that for any $s,t\in[0,T]$ and $u,v\in\mathbb R^d$,
      \begin{gather}
            \label{lem-v-rs1}
            \big|\hat{V}(t,u)\big|
            \leq
            C\big(1+|u|\big),
            \quad
            \big|u+\hat{V}(t,u)\big|
            \leq 
            C\big(1+|Pu|\big),
            \\\label{lem-v-rs2}
            \big|\hat{V}(t,u) - \hat{V}(s,v)\big|
            \leq
            \hat{L}\big|u-v\big| 
           +C\big(1+|u|+|v|\big)^{\gamma}
            \big|t-s\big|,
                        \\\label{lem-v-rs0}
            \big|\big(u+\hat{V}(t,u)\big)
            - \big(v+\hat{V}(s,v)\big)\big|
            \leq
            \big(1+\hat{L}\big)\big|Pu - Pv\big| 
             +C\big(1+|Pu|+|Pv|\big)^\gamma\big|t-s\big|,
           \\\label{lem-v-rs3}
            \big|\partial_t\hat V(t,u)\big|
            \leq C(1+|u|)^\gamma,
            \quad
            \big|\partial_t\hat{V}(t,u)-\partial_t\hat{V}(s,v)\big| 
            \leq 
            C(1+|u|+|v|)^{3\gamma-1}
            \big(|u-v|+|t-s|\big).
      \end{gather}
\end{lemma}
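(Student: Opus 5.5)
The approach is the implicit function theorem applied to $\Phi(t,u,v):=A_tv+RF(t,u+v)$, combined with a global Hadamard-type argument for $\hat V$ itself. Differentiating in $v$ gives $\partial_v\Phi=A_t+R\partial_xF(t,u+v)=J(t,u+v)$. By Assumption \ref{ass:index1} this is invertible with $|J^{-1}|\le L_J$ uniformly. Since $F\in C^{1,2}$ and $A_t$ is $C^2$ by Lemma \ref{lem:prop:A}, the implicit function theorem with global uniqueness of solutions of \eqref{eq:v(t,x)} shows that $\hat V$ is $C^1$ in $(t,u)$. It inherits the $C^{1,2}$ regularity of $F$. Differentiating the identity $A_t\hat V+RF(t,u+\hat V)=0$ yields the key formulas
\begin{equation*}
\partial_u\hat V(t,u)=-J(t,\Psi_t(u))^{-1}R\,\partial_xF(t,\Psi_t(u)),\qquad
\partial_t\hat V(t,u)=-J(t,\Psi_t(u))^{-1}\big(A_t'\hat V(t,u)+R\,\partial_tF(t,\Psi_t(u))\big).
\end{equation*}
Writing $R\partial_xF=J-A_t$ gives $\partial_u\hat V=-I+J^{-1}A_t$. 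Hence $|\partial_u\hat V|\le d+L_J\sup_t|A_t|=\hat L$, where $|I|=\sqrt d\le d$ in the Frobenius norm. Thus $\hat V(t,\cdot)$ is globally $\hat L$-Lipschitz. This immediately gives the first bound in \eqref{lem-v-rs1} once $\hat V(t,0)$ is uniformly bounded, which follows from continuity of $\hat V$ on the compact set $[0,T]\times\{0\}$.

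For the second bound in \eqref{lem-v-rs1}, I use \eqref{def:Qx}: $u+\hat V(t,u)=Pu+\hat V(t,Pu)$. Applying the first bound with $Pu$ gives $C(1+|Pu|)$. For \eqref{lem-v-rs2}, I split
\begin{equation*}
\hat V(t,u)-\hat V(s,v)=[\hat V(t,u)-\hat V(t,v)]+[\hat V(t,v)-\hat V(s,v)].
\end{equation*}
The first bracket is bounded by $\hat L|u-v|$. The second is controlled by the mean value theorem in time using the formula for $\partial_t\hat V$. Lemma \ref{lem:prop:A}, \eqref{asm:dF}, $|\Psi_t(v)|\le C(1+|v|)$ and the first bound give
\begin{equation*}
|\partial_t\hat V(t,v)|\le L_J\big(C(1+|v|)+C(1+|v|)^\gamma\big)\le C(1+|v|)^\gamma,
\end{equation*}
using $\gamma\ge1$. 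This is also the first claim of \eqref{lem-v-rs3}. Estimate \eqref{lem-v-rs0} follows by applying the same split to $Pu+\hat V(t,Pu)$ and $Pv+\hat V(s,Pv)$ via \eqref{def:Qx}.

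The main obstacle is the Lipschitz bound on $\partial_t\hat V$ in \eqref{lem-v-rs3}. I will write $\partial_t\hat V=-J^{-1}B$ with $B:=A_t'\hat V+R\partial_tF(t,\Psi_t(u))$. Then
\begin{equation*}
\partial_t\hat V(t,u)-\partial_t\hat V(s,v)=-J_1^{-1}(B_1-B_2)+J_1^{-1}(J_1-J_2)J_2^{-1}B_2,
\end{equation*}
where subscripts $1,2$ denote evaluation at $(t,u)$ and $(s,v)$. For the first term, $|B_1-B_2|$ is controlled by Lemma \ref{lem:prop:A} (Lipschitz continuity of $A_t'$ via $A''$), \eqref{lem-v-rs2}, and \eqref{asm:FG-absJa} composed with \eqref{lem-v-rs0}. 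This produces factors $(1+|u|+|v|)^{\gamma-1}\cdot(1+|u|+|v|)^{\gamma}$ at worst, i.e.\ the $\gamma$ from the time part of \eqref{lem-v-rs0} multiplied by the $\gamma-1$ weight.

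For the second term, $|J_1-J_2|\le |A_t-A_s|+|\partial_xF(t,\Psi_1)-\partial_xF(s,\Psi_2)|$. By \eqref{asm:FG-absIa} and \eqref{lem-v-rs0} this is bounded by $C(1+|u|+|v|)^{2\gamma-2}(|u-v|+|t-s|)$, using $\gamma-2+\gamma$ and $\gamma$ exponents and taking maxima. Multiplying by $|B_2|\le C(1+|v|)^\gamma$ gives at most the exponent $3\gamma-2\le 3\gamma-1$. Collecting all exponents, and bounding each by $3\gamma-1$ since the base is at least $1$, yields \eqref{lem-v-rs3}. The bookkeeping of powers, in particular ensuring that the time increments of $\Psi$ contribute $(1+|u|+|v|)^\gamma$ rather than something larger, is the delicate step. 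I will handle it carefully by always passing through \eqref{lem-v-rs0} and \eqref{lem-v-rs1}.
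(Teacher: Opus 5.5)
Your proposal is correct. For the part the paper actually proves, namely the $C^{1,2}$ regularity and \eqref{lem-v-rs3}, it is the same argument. The paper also uses the implicit function theorem for $h(t,u,v)=A_tv+RF(t,u+v)$, writes $\partial_t\hat V=-B^{-1}D$ with your $J$ and $B$, and splits the difference exactly as $B^{-1}(D_1-D_2)$ plus $B^{-1}(B_1-B_2)B^{-1}D$.

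For \eqref{lem-v-rs1}--\eqref{lem-v-rs0} the paper gives no proof and simply cites \cite[Lemma 2.2]{chen2025strong}. Your direct derivation from $\partial_u\hat V=J^{-1}A_t-I$ and \eqref{def:Qx} is a valid self-contained replacement. It also explains where the constant $\hat L=(\sup_t|A_t|)L_J+d$ comes from.

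There is one bookkeeping slip, and it is harmless. You claim $|J_1-J_2|\le C(1+|u|+|v|)^{2\gamma-2}(|u-v|+|t-s|)$, but this is false when $\gamma\in[1,2)$, which Assumption \ref{ass:time-regularity} allows. Two things go wrong in that range:
\begin{itemize}
\item The time term of \eqref{asm:FG-absIa} contributes $(1+|\Psi_1|+|\Psi_2|)^{\gamma}|t-s|$, and for $\gamma<2$ this exceeds $(1+|u|+|v|)^{2\gamma-2}|t-s|$.
\item For $\gamma<2$ the weight $(1+|\Psi_1|+|\Psi_2|)^{\gamma-2}$ has a negative exponent. It can only be bounded by $1$, not by $C(1+|u|+|v|)^{\gamma-2}$, because $|\Psi_i|$ is controlled by $|u|,|v|$ only from above.
\end{itemize}
The correct exponent is $\max\{2\gamma-2,\gamma\}$. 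Multiplying by $|B_2|\le C(1+|v|)^\gamma$ then gives $\max\{3\gamma-2,2\gamma\}$, which is at most $3\gamma-1$ since $\gamma\ge 1$. So \eqref{lem-v-rs3} still follows as stated.
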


\begin{proof}
      Under the assumptions stated above, the algebraic reconstruction map $\hat{V}$ satisfies \eqref{lem-v-rs1}, \eqref{lem-v-rs2}, and \eqref{lem-v-rs0}(see \cite[Lemma 2.2]{chen2025strong}). It remains to prove \eqref{lem-v-rs3}.  
      For any $t\in[0,T], u\in\mathbb{R}^{d}$, let 
      $$B(t,u) := A_t+R\partial_xF(t,u+\hat V(t,u)),
      \quad
      D(t,u) := A_t'\hat V(t,u)+R\partial_tF(t,u+\hat V(t,u)).$$ 
      Then Assumptions \ref{asm:FG}, \ref{ass:time-regularity} together with \eqref{lem-v-rs1} yield $B(t,u) = J(t,u+\hat V(t,u))$ is nonsingular, $|B(t,u)^{-1}|\leq L_J$ and $|D(t,u)|\leq C(1+|u|)^\gamma$.
      Define
      \begin{equation*}
      h(t,u,v):=A_t v+RF(t,u+v), \quad t\in[0,T],u,v \in \mathbb{R}^{d}.
      \end{equation*}
      The asserted $C^{1,2}$ regularity follows from the parameter-dependent implicit function theorem, because $h$ has the required regularity and its derivative with respect to $v$ is the invertible matrix $J(t,u+v)$.      
      By the definition of $\hat{V}$, one has
      \begin{equation}\label{eq:h-zero}
            h\big(t,u,\hat{V}(t,u)\big)\equiv 0,
            \quad
            t\in[0,T], u\in\mathbb{R}^{d}.
      \end{equation}
      Differentiating \eqref{eq:h-zero} with respect to $t$ gives
     \begin{align*}
           0
           =&~
           \frac{\mathrm d}{\mathrm dt}h\big(t,u,\hat{V}(t,u)\big) \notag
           \\=&~
           A_t'\hat{V}(t,u) + A_t\partial_t\hat{V}(t,u)
           +
           R\big(\partial_tF\big(t,u+\hat{V}(t,u)\big)
           +
           \partial_xF\big(t,u+\hat{V}(t,u)\big)\partial_t\hat{V}(t,u)\big)
           \\=&~
           D(t,u) + B(t,u)\partial_t\hat{V}(t,u).
     \end{align*}
     The fact $|B(t,u)^{-1}|\leq L_J$ and $|D(t,u)|\leq C(1+|u|)^\gamma$ give
      \begin{align*}
            \big|\partial_t\hat V(t,u)\big| 
            =
            \left|- B(t,u)^{-1} D(t,u) \right| \nonumber 
            \leq
            \left|B(t,u)^{-1}\right| \left| D(t,u) \right| \nonumber
            \leq
            C\big(1+|u|^{\gamma}\big),
      \end{align*}
      and
      \begin{align*}
            &~\big|\partial_t\hat{V}(t,u)-\partial_t\hat{V}(s,v)\big|
            =
            \big|-B(t,u)^{-1}D(t,u)+B(s,v)^{-1}D(s,v)\big|\\
            =&~
            \big|B(s,v)^{-1}D(s,v)-B(s,v)^{-1}D(t,u)
            +B(s,v)^{-1}D(t,u)-B(t,u)^{-1}D(t,u)\big|
            \\\leq&~
            \big|B(s,v)^{-1}D(s,v)-B(s,v)^{-1}D(t,u)\big|
            +
            \big|B(s,v)^{-1}D(t,u)-B(t,u)^{-1}D(t,u)\big|
            \\\leq&~
            |B(s,v)^{-1}| |D(s,v)-D(t,u)|+|B(s,v)^{-1} -B(t,u)^{-1}|\,|D(t,u)|\\
            =&~
            |B(s,v)^{-1}|\,|D(s,v)-D(t,u)|
            +
            |B(s,v)^{-1}\big(B(t,u)-B(s,v)\big)B(t,u)^{-1}|\,|D(t,u)|
            \\\leq&~
            |B(s,v)^{-1}|\,|D(s,v)-D(t,u)|
            +
            |B(s,v)^{-1}|\,|B(t,u)-B(s,v)|\,|B(t,u)^{-1}|\,|D(t,u)| 
            \\\leq&~
            C \,|D(s,v)-D(t,u)| + C(1+|u|)^\gamma\,|B(t,u)-B(s,v)|.
      \end{align*}
      Using Assumption \ref{ass:time-regularity} and Lemma \ref{lem:prop:A}, we obtain
      \begin{align*}
            &~|D(s,v)-D(t,u)|
            =
            \big|A_s' \hat{V}(s,v)+R \partial_tF \big(s,v+\hat{V}(s,v)\big)-A_t' \hat{V}(t,u) - R \partial_tF \big(t,u+\hat{V}(t,u)\big)\big|
            \\\leq&~
            |A_s' - A_t'||\hat{V}(s,v)| + |A_t'||\hat{V}(s,v)-\hat{V}(t,u)|
            +
            |R|\big|\partial_tF \big(s,v+\hat{V}(s,v)\big)-\partial_tF \big(t,u+\hat{V}(t,u)\big)\big|
            \\\leq&~
            C|s-t|\big(1+|v|\big)+C\big(1+|u|+|v|\big)^{\gamma}
            \big|s-t\big|+C\big|v-u\big|
            \\&~+
            C(1+| u+\hat{V}(t,u) |+| v+\hat{V}(s,v) |)^{\gamma-1} | (v+\hat{V}(s,v)) - (u+\hat{V}(t,u)) |
            \\
            &~+C(1+| u+\hat{V}(t,u) |+| v+\hat{V}(s,v) |)^{\gamma}|s-t|\\
            \leq&~
            C(1+| u |+| v |)^{\gamma-1} | v - u |+C(1+| u |+| v |)^{2\gamma-1}|s-t|,
      \end{align*}
      and
      \begin{align*}
            &~|B(t,u)-B(s,v)|
            =
            \big|A_t+R\partial_xF(t,u+\hat{V}(t,u))
            -A_s - R\partial_xF(s,v+\hat{V}(s,v))\big|
            \\
            \leq&~
            |A_t-A_s|
            +
            |R|\,\big|\partial_xF(t,u+\hat{V}(t,u))
            -\partial_xF(s,v+\hat{V}(s,v))\big|
            \\
            \leq&~
            C|t-s|
            +
            C(1+|u+\hat{V}(t,u)|+|v+\hat{V}(s,v)|)^{\gamma-2}
            |(u+\hat{V}(t,u))-(v+\hat{V}(s,v))|
            \\
            &~+
            C(1+|u+\hat{V}(t,u)|+|v+\hat{V}(s,v)|)^\gamma
            |t-s|
            \\
            \leq&~
            C|t-s|
            +
            C(1+|u|+|v|)^{\gamma-2}
            \big(|u-v| + (1+|u|+|v|)^\gamma |t-s|\big)
            +
            C(1+|u|+|v|)^\gamma |t-s|
            \\
            \leq&~
            C(1+|u|+|v|)^\gamma |u-v|
            +
            C(1+|u|+|v|)^{2\gamma-1}|t-s|.
      \end{align*}
      Finally, we obtain
      \begin{align*}
            \big|\partial_t\hat{V}(t,u)-\partial_t\hat{V}(s,v)\big|
            \leq&~
            C(1+|u|+|v|)^{\gamma-1}|u-v|
            +
            C(1+|u|+|v|)^{2\gamma-1}|t-s|
            \\
            &~+
            C(1+|u|)^\gamma
            \big(
            C(1+|u|+|v|)^\gamma |u-v|
            +
            C(1+|u|+|v|)^{2\gamma-1}|t-s|
            \big)
            \\
            \leq&~
            C(1+|u|+|v|)^{3\gamma-1}
            \big(|u-v|+|t-s|\big).
      \end{align*}
      This proves \eqref{lem-v-rs3} and completes the proof.
\end{proof}

The following lemma follows from \cite[Lemma 2.4]{chen2025strong}. The proof is omitted.

\begin{lemma}\label{lem-v-3}
Suppose that Assumptions \ref{asm:At}, \ref{ass:index1}, \ref{asm:FG}, and \ref{ass:time-regularity} hold. Then, for any $x,y \in \mathbb{R}^{d}$ and $0 \leq t \leq T$,
      \begin{equation}\label{lem:fg:rs111}
            2\big\langle x - y, 
            f(t,x) - f(t,y) \big\rangle 
           +\big(p_{1}-1\big)
            \big|g(t,x) - g(t,y)\big|^{2}
            \leq
            2L_{1}\big(1+\hat{L}^2\big)
            \big|x-y\big|^{2}.
      \end{equation}
Moreover, for any $2 \leq p < p_{1}$, there exists a constant $C > 0$ such that
      \begin{equation}\label{lem:fg:rs4}
            2\big\langle x, f(t,x) \big\rangle 
           +\big(p-1\big)\big|g(t,x)\big|^{2}
            \leq 
            C\big(1+\big|x\big|^{2}\big),
            \quad x \in \R^{d}, t \in [0,T].
      \end{equation}
\end{lemma}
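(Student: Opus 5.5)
The plan is to derive both inequalities from the coupled monotonicity condition \eqref{asm:FG-FG} of Assumption \ref{asm:FG}, transported to the reduced coefficients through the reconstruction map $\Psi_t$. For \eqref{lem:fg:rs111}, fix $t$ and $x,y\in\mathbb R^d$, and set $\bar x:=\Psi_t(x)=x+\hat V(t,x)$ and $\bar y:=\Psi_t(y)$. By definition, $f(t,x)=A_t^-F(t,\bar x)$ and $g(t,x)=A_t^-G(t,\bar x)$, and likewise at $y$. Since $f$ and $g$ take values in $\operatorname{Im}(P)$ by \eqref{eq:projection-invariant-extension2}, we have $\langle x-y,f(t,x)-f(t,y)\rangle=\langle Px-Py,f(t,x)-f(t,y)\rangle$. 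Moreover, $P\bar x=Px$ because $\hat V\in\operatorname{Im}(Q)$. Therefore
\begin{equation*}
2\langle x-y,f(t,x)-f(t,y)\rangle+(p_1-1)|g(t,x)-g(t,y)|^2
=2\langle P\bar x-P\bar y,A_t^-F(t,\bar x)-A_t^-F(t,\bar y)\rangle+(p_1-1)|A_t^-G(t,\bar x)-A_t^-G(t,\bar y)|^2 .
\end{equation*}
Dropping the nonnegative Milstein term, \eqref{asm:FG-FG} bounds the right-hand side by $L_1|\bar x-\bar y|^2$.

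Next I control $|\bar x-\bar y|$ by the differential components. Applying \eqref{lem-v-rs0} with $s=t$ gives $|\bar x-\bar y|\le (1+\hat L)|Px-Py|\le(1+\hat L)|x-y|$. Squaring yields $(1+\hat L)^2\le 2(1+\hat L^2)$, so the bound becomes $2L_1(1+\hat L^2)|x-y|^2$, which is exactly \eqref{lem:fg:rs111}. This constant also explains the form of the stated bound. The only subtle point in this step is to make sure that the orthogonality $\langle Qz, Pw\rangle=0$ is used correctly, so that the inner product with $x-y$ reduces to the one with $Px-Py$. This holds because $P$ is an orthogonal projector.

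For \eqref{lem:fg:rs4}, take $y=0$ and split off the values at zero, imitating the computation in the proof of Proposition \ref{prop:exact-solution-constraint}. Write $f(t,x)=(f(t,x)-f(t,0))+f(t,0)$ and $g(t,x)=(g(t,x)-g(t,0))+g(t,0)$. Young's inequality with weight $\varepsilon$ gives
\begin{equation*}
(p-1)|g(t,x)|^2\le (p-1)(1+\varepsilon)|g(t,x)-g(t,0)|^2+(p-1)(1+\varepsilon^{-1})|g(t,0)|^2 .
\end{equation*}
Since $p<p_1$, I choose $\varepsilon$ so that $(p-1)(1+\varepsilon)=p_1-1$. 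Then \eqref{lem:fg:rs111} absorbs the difference terms. The cross term is bounded by $2\langle x,f(t,0)\rangle\le|x|^2+|f(t,0)|^2$.

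It remains to bound $f(t,0)$ and $g(t,0)$ uniformly in $t$. By \eqref{lem-v-rs1}, $|\Psi_t(0)|\le C$. Combining this with \eqref{asm:bound_FG} and the bound $|A_t^-|\le\sqrt r/\underline\sigma$ from Lemma \ref{lem:prop:A} gives $|f(t,0)|+|g(t,0)|\le C$. Collecting the terms yields $C(1+|x|^2)$. The main obstacle is just the bookkeeping of constants in this absorption. I expect the only genuine structural input to be the Lipschitz bound \eqref{lem-v-rs0} for $\Psi_t$ in terms of $P$-components, which is already available from Lemma \ref{lem-v}.
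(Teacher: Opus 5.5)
Your proof is correct. The paper gives no argument of its own and simply cites \cite[Lemma 2.4]{chen2025strong}. Your route is the natural self-contained one: pull \eqref{asm:FG-FG} back through $\Psi_t$ (using $P\Psi_t(x)=Px$, $f,g\in\operatorname{Im}(P)$, and dropping the nonnegative Milstein term), control $|\Psi_t(x)-\Psi_t(y)|$ by \eqref{lem-v-rs0}, and then split at $y=0$ with a weighted Young inequality. The only cosmetic difference is in how the constant $2L_1(1+\hat L^2)$ arises. It presumably comes from $|\Psi_t(x)-\Psi_t(y)|^2\le 2|x-y|^2+2|\hat V(t,x)-\hat V(t,y)|^2$ together with \eqref{lem-v-rs2}, whereas you obtain it via $(1+\hat L)^2\le 2(1+\hat L^2)$.
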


The following moment estimates for the solution processes $\{U_t\}_{t\in[0,T]}$ and $\{X_t\}_{t\in[0,T]}$ follow from \cite[Lemma 2.5]{chen2025strong}. The proof is omitted.

\begin{lemma}\label{lem:bound}
Suppose that Assumptions \ref{asm:At}, \ref{ass:index1}, \ref{asm:FG}, and \ref{ass:time-regularity} hold. Then for any $2 \leq p < p_{1}$, there exists a constant $C > 0$ such that
      \begin{align*}
            \sup_{0 \leq t \leq T} 
            \mathbb{E}\big[|U_{t}|^{p}\big]
            +
            \sup_{0 \leq t \leq T} 
            \mathbb{E}\big[|X_{t}|^{p}\big] \leq C.
      \end{align*}
\end{lemma}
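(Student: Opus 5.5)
The moment bound will come from the reduced SDE \eqref{eq:inherent_U} for $U_t$, which is unconstrained, together with the reconstruction \eqref{eq:XU}, which transfers the bound to $X_t$. Thus the argument has two steps. First, I bound $\sup_t\mathbb E[|U_t|^p]$ by an Itô/Lyapunov argument using the coercivity estimate \eqref{lem:fg:rs4}. Second, I pass to $X_t$ using \eqref{lem-v-rs1}.

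\emph{Step 1: moments of $U_t$.} Fix $2\le p<p_1$ and apply Itô's formula to $|U_t|^p$ along \eqref{eq:inherent_U}. The drift of $|U_t|^p$ is
\begin{equation*}
p|U_t|^{p-2}\Big(\langle U_t,f(t,U_t)\rangle+\tfrac12|g(t,U_t)|^2\Big)+\tfrac{p(p-2)}{2}|U_t|^{p-4}|U_t^\top g(t,U_t)|^2 .
\end{equation*}
It is bounded above by
\begin{equation*}
\tfrac p2|U_t|^{p-2}\big(2\langle U_t,f\rangle+(p-1)|g|^2\big),
\end{equation*}
and by \eqref{lem:fg:rs4} this is at most $C(1+|U_t|^p)$.

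The stochastic integral is only a local martingale, because $f$ and $g$ grow superlinearly. I therefore introduce the stopping times $\tau_n:=\inf\{t:|U_t|\ge n\}\wedge T$. For the stopped process the stochastic integral has zero expectation, which gives
\begin{equation*}
\mathbb E|U_{t\wedge\tau_n}|^p\le |PX_0|^p+C\int_0^t\big(1+\mathbb E|U_{s\wedge\tau_n}|^p\big)\,\diff s .
\end{equation*}
Grönwall's inequality then yields a bound uniform in $n$. Since $U$ has continuous paths, $\tau_n\to T$ almost surely, and Fatou's lemma gives $\sup_t\mathbb E|U_t|^p\le C$.

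\emph{Step 2: moments of $X_t$.} By \eqref{eq:XU} and the second inequality in \eqref{lem-v-rs1}, $|X_t|=|U_t+\hat V(t,U_t)|\le C(1+|PU_t|)\le C(1+|U_t|)$. Hence $\mathbb E|X_t|^p\le C(1+\mathbb E|U_t|^p)\le C$, uniformly in $t$.

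The only delicate point is the localization in Step 1, needed because the coefficients are not globally Lipschitz. Existence and continuity of the solution, which the localization uses, are already provided by Proposition \ref{prop:exact-solution-constraint}. The restriction $p<p_1$ enters exactly through \eqref{lem:fg:rs4}.
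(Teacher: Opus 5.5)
Your proposal is correct: Itô's formula for $|U_t|^p$ with the drift bounded via the coercivity estimate \eqref{lem:fg:rs4}, localization by $\tau_n$, Gr\"onwall and Fatou bound $\sup_t\mathbb E|U_t|^p$, and the second estimate in \eqref{lem-v-rs1} transfers this to $X_t=U_t+\hat V(t,U_t)$. The paper omits the proof and cites \cite[Lemma 2.5]{chen2025strong}, where the reduced coercivity estimate (stated here as \eqref{lem:fg:rs4}) and the reconstruction bound \eqref{lem-v-rs1} play the same roles, so your argument is essentially the intended one.
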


The following H\"{o}lder continuity estimate for $\{U_t\}_{t\in[0,T]}$ and $\{X_t\}_{t\in[0,T]}$ follows from \cite[Lemma 2.6]{chen2025strong}. The proof is omitted.

\begin{lemma}\label{lem:UX}
Suppose that Assumptions \ref{asm:At}, \ref{ass:index1}, \ref{asm:FG}, and \ref{ass:time-regularity} hold. Then, for any $2\leq \nu<p_1/\gamma$, there exists a constant $C>0$ such that, for all $s,t\in[0,T]$,
      \begin{align*}
            \mathbb{E}\big[|U_t - U_s|^{\nu}\big]
            +
            \mathbb{E}\big[\big|X_t- X_s\big|^{\nu}\big]
            \leq 
            C\Big(|t-s|^{\nu}
            +
            |t-s|^{\frac{\nu}{2}}\Big).
      \end{align*}
\end{lemma}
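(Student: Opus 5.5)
The estimate for the differential component $U$ comes first, from the reduced SDE \eqref{eq:inherent_U}; the bound for $X$ is then transferred through the reconstruction formula \eqref{eq:XU}. Without loss of generality let $s<t$.

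\emph{Step 1: the differential component.} Writing
\begin{equation*}
U_t-U_s=\int_s^t f(r,U_r)\,\diff r+\int_s^t g(r,U_r)\,\diff W_r ,
\end{equation*}
the elementary inequality $|a+b|^\nu\le 2^{\nu-1}(|a|^\nu+|b|^\nu)$ splits the error into a drift part and a diffusion part.

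For the drift integral, H\"older's inequality gives
\begin{equation*}
\mathbb E\Big[\Big|\int_s^t f\,\diff r\Big|^\nu\Big]\le |t-s|^{\nu-1}\int_s^t\mathbb E[|f(r,U_r)|^\nu]\,\diff r .
\end{equation*}
For the stochastic integral, the Burkholder--Davis--Gundy inequality followed by H\"older's inequality gives
\begin{equation*}
\mathbb E\Big[\Big|\int_s^t g\,\diff W_r\Big|^\nu\Big]\le C|t-s|^{\nu/2-1}\int_s^t\mathbb E[|g(r,U_r)|^\nu]\,\diff r .
\end{equation*}

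It remains to bound the integrands uniformly in $r$. By Lemma \ref{lem:prop:A}, $|A_r^-|$ is bounded. By \eqref{asm:bound_FG} and \eqref{lem-v-rs1},
\begin{equation*}
|f(r,u)|\le C(1+|u|)^{\gamma},\qquad |g(r,u)|\le C(1+|u|)^{(\gamma+1)/2}\le C(1+|u|)^{\gamma},
\end{equation*}
where the last inequality uses $\gamma\ge1$. Hence
\begin{equation*}
\mathbb E[|f(r,U_r)|^\nu]+\mathbb E[|g(r,U_r)|^\nu]\le C\bigl(1+\mathbb E[|U_r|^{\gamma\nu}]\bigr).
\end{equation*}
Since $\nu<p_1/\gamma$, we have $\gamma\nu<p_1$, so Lemma \ref{lem:bound} applies with $p=\max(2,\gamma\nu)$ and bounds this uniformly in $r$. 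This yields
\begin{equation*}
\mathbb E[|U_t-U_s|^\nu]\le C\bigl(|t-s|^\nu+|t-s|^{\nu/2}\bigr).
\end{equation*}

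\emph{Step 2: the full solution.} By \eqref{eq:XU} and \eqref{lem-v-rs0}, and using $PU_r=U_r$,
\begin{equation*}
|X_t-X_s|\le(1+\hat L)|U_t-U_s|+C(1+|U_t|+|U_s|)^{\gamma}|t-s| .
\end{equation*}
Raising this to the power $\nu$ and taking expectations, the first term is controlled by Step 1. The second term is at most $C|t-s|^\nu\bigl(1+\mathbb E[|U_t|^{\gamma\nu}]+\mathbb E[|U_s|^{\gamma\nu}]\bigr)\le C|t-s|^\nu$, again by Lemma \ref{lem:bound} since $\gamma\nu<p_1$. Adding the two estimates gives the claim.

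\emph{Main obstacle.} The only delicate point is the moment bookkeeping: every polynomial growth factor must stay within the admissible range $p<p_1$ of Lemma \ref{lem:bound}. The restriction $\nu<p_1/\gamma$ in the statement is exactly what makes the exponent $\gamma\nu$ admissible. The diffusion growth $(\gamma+1)/2\le\gamma$ and the growth $\gamma$ in the time-increment term of \eqref{lem-v-rs0} both stay within this range.
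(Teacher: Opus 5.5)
Your argument is correct. The H\"older and BDG estimates for the reduced SDE \eqref{eq:inherent_U} go through, as do the growth bounds $|f(r,u)|\le C(1+|u|)^{\gamma}$ and $|g(r,u)|\le C(1+|u|)^{(\gamma+1)/2}$ from \eqref{asm:bound_FG} and \eqref{lem-v-rs1}, and the transfer to $X$ via \eqref{eq:XU} and \eqref{lem-v-rs0}; the condition $\gamma\nu<p_1$ is exactly the integrability Lemma \ref{lem:bound} must supply. The paper gives no proof here and defers to Lemma 2.6 of \cite{chen2025strong}; your write-up is the standard self-contained derivation of that result from lemmas already stated in the paper.
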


As a direct consequence of Lemmas \ref{lem:bound} and \ref{lem:UX}, we obtain the following weighted increment estimate.

\begin{corollary}\label{cor:mixed-moment}
Suppose that Assumptions \ref{asm:At}, \ref{ass:index1}, \ref{asm:FG}, and \ref{ass:time-regularity} hold. Let $a,b>0$ satisfy $2\gamma\leq a+b\gamma < p_1 $. Then there exists a constant $C>0$ such that, for all $s,t\in[0,T]$,
      \begin{equation*}
      \mathbb E\left[ (1+|X_t|+|X_s|)^a |X_t-X_s|^b \right] \leq C|t-s|^{\frac b2}.
      \end{equation*}
\end{corollary}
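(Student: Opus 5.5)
The plan is to apply Hölder's inequality with a pair of conjugate exponents chosen so that each factor is controlled by one of the two preceding lemmas: the polynomial weight by Lemma \ref{lem:bound} and the increment by Lemma \ref{lem:UX}. Concretely, for an exponent $\rho>1$ with conjugate $\rho'=\rho/(\rho-1)$, we write
\begin{equation*}
\mathbb E\left[(1+|X_t|+|X_s|)^a|X_t-X_s|^b\right]
\leq
\Big(\mathbb E\big[(1+|X_t|+|X_s|)^{a\rho'}\big]\Big)^{1/\rho'}
\Big(\mathbb E\big[|X_t-X_s|^{b\rho}\big]\Big)^{1/\rho}.
\end{equation*}
We choose $\nu:=b\rho$. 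The first factor is bounded by a constant via Lemma \ref{lem:bound}, provided $a\rho'<p_1$; when $a\rho'<2$ we simply use Jensen's inequality to pass to the second moment. The second factor is bounded via Lemma \ref{lem:UX}, provided $2\leq\nu<p_1/\gamma$. That lemma gives $\mathbb E[|X_t-X_s|^\nu]\leq C(|t-s|^\nu+|t-s|^{\nu/2})\leq C|t-s|^{\nu/2}$, since $|t-s|\leq T$. Raising this to the power $1/\rho=b/\nu$ produces exactly $|t-s|^{b/2}$.

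The main obstacle is the choice of $\rho$, that is, of $\nu$. We need simultaneously
\begin{equation*}
\nu\in\Big[\max(2,b),\,\frac{p_1}{\gamma}\Big)
\quad\text{and}\quad
\frac{a\nu}{\nu-b}<p_1 .
\end{equation*}
The second condition rearranges to $\nu\,(p_1-a)>p_1 b$, i.e. $\nu>p_1b/(p_1-a)$; here $a<p_1$ follows from $a+b\gamma<p_1$. So an admissible $\nu$ exists iff
\begin{equation*}
\frac{p_1 b}{p_1-a}<\frac{p_1}{\gamma}
\quad\Longleftrightarrow\quad
b\gamma<p_1-a,
\end{equation*}
which is precisely the hypothesis $a+b\gamma<p_1$. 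We then also need $2<p_1/\gamma$ and $b<p_1/\gamma$. The second holds because $b\gamma<p_1-a<p_1$. The first holds because $\gamma<(p_1+2)/10$ gives $2\gamma<p_1$ (equivalently, it follows from the hypothesis $2\gamma\leq a+b\gamma<p_1$). Hence we pick $\nu$ strictly inside the interval
\begin{equation*}
\Big(\max\Big\{2,\,b,\,\frac{p_1b}{p_1-a}\Big\},\ \frac{p_1}{\gamma}\Big),
\end{equation*}
which is nonempty. If $b\geq\nu$ were forced the Hölder splitting would degenerate, but $\nu>b$ is built into this choice.

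With this $\nu$ and $\rho=\nu/b>1$, the two bounds combine to give the claim with a constant $C$ depending only on $a,b,p_1,\gamma,T$ and the constants from Lemmas \ref{lem:bound} and \ref{lem:UX}. The case $b\geq 2$ and the case $b<2$ are both handled uniformly by this choice, since only $\nu\geq2$ is needed.
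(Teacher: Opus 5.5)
Your proposal is correct and follows the same route as the paper: a H\"older splitting in which Lemma \ref{lem:bound} controls the polynomial weight, Lemma \ref{lem:UX} controls the increment, and $|t-s|\leq T$ absorbs the $|t-s|^\nu$ term. The paper simply fixes $\nu=(a+b\gamma)/\gamma$, i.e.\ H\"older exponents $(a+b\gamma)/a$ and $(a+b\gamma)/(b\gamma)$. This lies in your admissible range and makes the weight moment exactly $a+b\gamma<p_1$, so your interval argument is a slightly more flexible version of the same computation.
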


\begin{proof}
Set $\nu=(a+b\gamma)/\gamma$. Then $2\leq\nu<p_1/\gamma$. By H\"older's inequality, Lemma \ref{lem:bound}, and Lemma \ref{lem:UX},
      \begin{align*}
            \mathbb E\left[
            (1+|X_t|+|X_s|)^a|X_t-X_s|^b
            \right]
            &\leq
            \left(
            \mathbb E\left[(1+|X_t|+|X_s|)^{a+b\gamma}\right]
            \right)^{\frac{a}{a+b\gamma}}
            \left(\mathbb E\left[|X_t-X_s|^\nu\right]\right)^{\frac{b\gamma}{a+b\gamma}}
            \\
            &\leq
            C\left(|t-s|^\nu+|t-s|^{\nu/2}\right)^{b/\nu}
            \\
            &\leq C|t-s|^{b/2}.
      \end{align*}
      The last inequality follows from $0\leq|t-s|\leq T$.
\end{proof}

The following lemma gives the mean square time-regularity of the SDAE coefficients and the reduced coefficients along the exact solutions of \eqref{eq:SDAE} and \eqref{eq:inherent_U}.

\begin{lemma}\label{lem:reduced-ms-regularity}
      Suppose that Assumptions \ref{asm:At}, \ref{ass:index1}, \ref{asm:FG}, and \ref{ass:time-regularity} hold. Then there exists a constant $C>0$ such that for all $s,t\in[0,T]$,
      \begin{align}
            &~\mathbb{E}\left[|F(t,X_t)-F(s,X_s)|^2\right]
            +
            \mathbb{E}\left[|G(t,X_t)-G(s,X_s)|^2\right]
            \leq C|t-s|,
            \notag\\
            &~\mathbb{E}\left[|f(t,U_t)-f(s,U_s)|^2\right]
            +
            \mathbb{E}\left[|g(t,U_t)-g(s,U_s)|^2\right]
            \leq C|t-s|.
            \label{eq:red-ms-fg}
      \end{align}
\end{lemma}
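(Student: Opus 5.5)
The plan is to bound each of the four expectations pointwise using the polynomial Lipschitz estimates of Remark~\ref{rm:FG} and Lemma~\ref{lem-v}. The resulting weighted increments are then controlled by Corollary~\ref{cor:mixed-moment} and by the moment bound of Lemma~\ref{lem:bound}.

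\textbf{Step 1: $F$ and $G$.} By \eqref{asm:FG-absFa},
\begin{equation*}
|F(t,X_t)-F(s,X_s)|^2\le C(1+|X_t|+|X_s|)^{2\gamma-2}|X_t-X_s|^2+C(1+|X_t|+|X_s|)^{2\gamma}|t-s|^2 .
\end{equation*}
\begin{itemize}
\item For the first term, apply Corollary~\ref{cor:mixed-moment} with $a=2\gamma-2$ and $b=2$, which requires $2\gamma\le 4\gamma-2<p_1$. The left inequality holds since $\gamma\ge1$. The right one follows from $\gamma<(p_1+2)/10$, which gives $4\gamma-2<p_1$. This yields $C|t-s|$. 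If $\gamma=1$, then $a=0$; in that case use Lemma~\ref{lem:UX} with $\nu=2$ directly.
\item For the second term, Lemma~\ref{lem:bound} with $p=2\gamma<p_1$ gives $C|t-s|^2\le CT|t-s|$.
\end{itemize}
For $G$, \eqref{asm:FG-absG} gives the analogous bound with weights $(1+\cdot)^{\gamma-1}|X_t-X_s|^2$ and $(1+\cdot)^{\gamma+1}|t-s|^2$. Corollary~\ref{cor:mixed-moment} then applies with $a=\gamma-1$, $b=2$, since $3\gamma-1\in[2\gamma,p_1)$. The time term uses moments of order $\gamma+1<p_1$.

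\textbf{Step 2: $f$ and $g$.} Write $f(t,U_t)=A_t^-F(t,\Psi_t(U_t))$ and note that $\Psi_t(U_t)=X_t$ by \eqref{eq:XU}. Then
\begin{equation*}
f(t,U_t)-f(s,U_s)=(A_t^--A_s^-)F(t,X_t)+A_s^-\big(F(t,X_t)-F(s,X_s)\big).
\end{equation*}
Lemma~\ref{lem:prop:A} gives $|A_t^--A_s^-|\le C|t-s|$ and $|A_s^-|\le C$. The first piece is then at most $C|t-s|^2\,\mathbb E[(1+|X_t|)^{2\gamma}]\le C|t-s|$, using \eqref{asm:bound_FG} and Lemma~\ref{lem:bound}. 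The second piece is handled by Step~1. The same decomposition works for $g$, with the second-moment bound $|G|^2\le C(1+|x|)^{\gamma+1}$.

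\textbf{Main obstacle.} The only delicate point is bookkeeping of exponents, so that every moment used stays strictly below $p_1$ and Corollary~\ref{cor:mixed-moment} is admissible. The strongest requirement is $4\gamma-2<p_1$, and it is comfortably implied by the assumed range of $\gamma$.

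Expressing $f$ and $g$ through $X$ is what makes the argument short: it avoids differentiating $\hat V$ and the constant $\hat L$ of Lemma~\ref{lem-v} altogether. If one preferred to work in $U$, one would instead use \eqref{lem-v-rs0} together with Lemma~\ref{lem:UX} for $U$, at the cost of slightly more bookkeeping.
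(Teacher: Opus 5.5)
Your proposal is correct and follows the paper's proof: the pointwise bounds from \eqref{asm:FG-absFa} and \eqref{asm:FG-absG}, then Corollary~\ref{cor:mixed-moment} together with Lemma~\ref{lem:bound}, and the same splitting $(A_t^--A_s^-)F(t,X_t)+A_s^-\bigl(F(t,X_t)-F(s,X_s)\bigr)$ for the reduced coefficients. The only difference is in the exponent bookkeeping. The paper raises all weights to $(1+|X_t|+|X_s|)^{2\gamma}$ and applies the corollary once with $a=2\gamma$, $b=2$, which needs $4\gamma<p_1$. That cruder choice also avoids the degenerate case $a=0$ at $\gamma=1$. Your sharper choices $a=2\gamma-2$ and $a=\gamma-1$ force you to handle that case separately, and you must do so for the $G$ term as well as for $F$.
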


\begin{proof}
      By \eqref{asm:FG-absFa}, \eqref{asm:FG-absG} and $\gamma \geq 1$,
      \begin{align*}
            &~|F(t, X_t) - F(s, X_s)|^2+|G(t, X_t) - G(s, X_s)|^2 \\
            \leq&~ 
            C \left( (1+|X_t|+|X_s|)^{\gamma - 1} |X_t - X_s|+(1+|X_t|+|X_s|)^{\gamma} |t - s| \right)^2 \\
            &~+ 
            C \left( (1+|X_t|+|X_s|)^{\gamma - 1} |X_t - X_s|^2+(1+|X_t|+|X_s|)^{\gamma+1} |t - s|^2 \right) \\
            \leq&~ C \left( (1+|X_t|+|X_s|)^{2\gamma - 2} |X_t - X_s|^2+(1+|X_t|+|X_s|)^{2\gamma } |t - s|^2 \right) \\
            \leq&~ C \left( (1+|X_t|+|X_s|)^{2\gamma } |X_t - X_s|^2+(1+|X_t|+|X_s|)^{2\gamma } |t - s|^2 \right) .
      \end{align*}
      Since $2\gamma + 2\gamma = 4\gamma<p_1$, Corollary \ref{cor:mixed-moment}, with $a=2\gamma$ and $b=2$, and Lemma \ref{lem:bound} give
      \begin{align}\label{FG-absDD}
            &~\mathbb{E} \left[ |F(t, X_t) - F(s, X_s)|^2 \right]
           +\mathbb{E} \left[ |G(t, X_t) - G(s, X_s)|^2 \right]
            \notag
            \\\leq&~
            C\mathbb E\left[
            (1+|X_t|+|X_s|)^{2\gamma}|X_t-X_s|^2
            \right]
            +
            C|t-s|^2\mathbb E\left[(1+|X_t|+|X_s|)^{2\gamma}\right]
            \leq C|t-s|.
      \end{align}
      By \eqref{FG-absDD} and Lemma \ref{lem:prop:A} we have
      \begin{align*}
            &~\mathbb{E} \left[ |f(t, U_t) - f(s, U_s)|^2 \right]
            =
            \mathbb{E} \left[ |A_t ^-F(t, X_t) - A_s ^-F(s, X_s)|^2 \right]
            \notag \\
            \leq&~
            2\mathbb{E} \left[ |A_t ^-F(t, X_t) - A_s ^-F(t, X_t)|^2 \right]+
            2\mathbb{E} \left[ |A_s ^-F(t, X_t) - A_s ^-F(s, X_s)|^2 \right]
            \notag \\
            \leq&~
            2 |A_t ^- - A_s ^-|^2\mathbb{E} \left[|F(t, X_t)|^2 \right]+
            2|A_s ^-|^2\mathbb{E} \left[ |F(t, X_t) - F(s, X_s)|^2 \right] \notag
            \\
            \leq&~ C|t-s|.
      \end{align*}
      Similarly, we obtain $\mathbb{E} \left[ |g(t, U_t) - g(s, U_s)|^2 \right] \leq C|t-s|$. Therefore \eqref{eq:red-ms-fg} follows.
\end{proof}

\subsection{Differential properties of the reconstruction map}

Fix $t\in[0,T]$ and $u\in\R^d$, and set $x:=\Psi_t(u)\in\mathcal M_t$. Differentiating the reconstruction equation
\begin{equation*}
      A_t\hat V(t,u)+RF\bigl(t,u+\hat V(t,u)\bigr)=0
\end{equation*}
with respect to $u$ gives $J(t,x)\partial_u\hat V(t,u) = -R\partial_xF(t,x)$. It follows from $J(t,x)=A_t+R\partial_xF(t,x)$ that
\begin{align}\label{eq:DuV-DuPsi}
      \partial_u\hat V(t,u) = J(t,x)^{-1}A_t-I_d,
      \quad
      \partial_u\Psi_t(u) = J(t,x)^{-1}A_t.
\end{align}
The formulas in \eqref{eq:DuV-DuPsi} will be used both in the derivative estimates below and in the identification of the Milstein coefficient in Section \ref{sec:orderAt}.

For the Milstein analysis, we need the following polynomial growth estimates for the reduced coefficients and their first-order derivatives with respect to the space and time variables.

\begin{lemma}\label{lem:reduced-growth-estimates}
Suppose that Assumptions \ref{asm:At}, \ref{ass:index1}, \ref{asm:FG}, and \ref{ass:time-regularity} hold. Then there exists a constant $C>0$ such that, for all $t\in[0,T]$, $u\in\R^d$, and $j=1,\cdots,m$, with $x:=\Psi_t(u)$, one has
      \begin{alignat}{2}
            \left|\partial_ug_j(t,u)\right|^2
            \leq&~ C(1+|x|)^{\gamma-1}, \quad&
            \left|\partial_tg_j(t,u)\right|^2
            \leq&~ C(1+|x|)^{3\gamma-1},
            \label{eq:red-growth-gu}\\
            \left|\partial_uf(t,u)\right|^2
            \leq&~ C(1+|x|)^{2\gamma}, \quad&
            \left|\partial_tf(t,u)\right|^2
            \leq&~ C(1+|x|)^{4\gamma},
            \label{eq:red-growth-fu}\\
            |f(t,u)|^2
            \leq&~ C(1+|x|)^{2\gamma}, \quad&
            |g(t,u)|^2
            \leq&~ C(1+|x|)^{\gamma+1}.
            \label{eq:red-growth-fg}
      \end{alignat}
\end{lemma}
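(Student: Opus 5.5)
The plan is to differentiate the definitions $f(t,u)=A_t^-F(t,\Psi_t(u))$ and $g_j(t,u)=A_t^-G_j(t,\Psi_t(u))$ by the chain rule. We then bound each factor using Lemma \ref{lem:prop:A}, Remark \ref{rm:FG}, the explicit Jacobian \eqref{eq:DuV-DuPsi}, and the time-derivative bound \eqref{lem-v-rs3}. Throughout, $x=\Psi_t(u)$, and all growth factors are expressed in terms of $(1+|x|)$.

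First, \eqref{eq:red-growth-fg} follows at once from \eqref{asm:bound_FG} and the bound $|A_t^-|\leq\sqrt r/\underline\sigma$:
\[
|f(t,u)|^2\leq C|F(t,x)|^2\leq C(1+|x|)^{2\gamma},\qquad |g(t,u)|^2\leq C(1+|x|)^{\gamma+1}.
\]

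For the spatial derivatives, \eqref{eq:DuV-DuPsi} gives $\partial_u\Psi_t(u)=J(t,x)^{-1}A_t$. This is uniformly bounded by $L_J\sqrt r\,\overline\sigma$, using Assumption \ref{ass:index1} and Lemma \ref{lem:prop:A}. Hence
\[
\partial_ug_j=A_t^-\partial_xG_j(t,x)J(t,x)^{-1}A_t,
\]
and \eqref{asm:dG} yields $|\partial_ug_j|^2\leq C(1+|x|)^{\gamma-1}$. Similarly, $|\partial_uf|^2\leq C|\partial_xF(t,x)|^2\leq C(1+|x|)^{2\gamma-2}$, which is dominated by $C(1+|x|)^{2\gamma}$.

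For the time derivatives, we write
\[
\partial_tf(t,u)=(A_t^-)'F(t,x)+A_t^-\bigl(\partial_tF(t,x)+\partial_xF(t,x)\,\partial_t\hat V(t,u)\bigr),
\]
and analogously for $g_j$.

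The one step needing care is converting $\partial_t\hat V(t,u)$, which \eqref{lem-v-rs3} bounds by $(1+|u|)^\gamma$, into powers of $(1+|x|)$. I would not go through $|u|$, since $u$ need not lie in $\operatorname{Im}(P)$ and $|u|$ is not controlled by $|x|$. Instead I would redo the computation from the proof of Lemma \ref{lem-v}:
\[
\partial_t\hat V(t,u)=-J(t,x)^{-1}\bigl(A_t'\hat V(t,u)+R\partial_tF(t,x)\bigr).
\]
Here $\hat V(t,u)=\hat V(t,Pu)$ is not literally true; what holds is $Q x=\hat V(t,Pu)$. The issue is resolved by differentiating the defining relation at the projected point. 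By \eqref{eq:projection-invariant-extension} we have $f(t,u)=f(t,Pu)$, so we may take $u\in\operatorname{Im}(P)$. Then $\hat V(t,u)=Qx$, and $|\hat V(t,u)|\leq|x|$. Hence $|\partial_t\hat V|\leq C(1+|x|)^\gamma$, using \eqref{asm:dF}.

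Combining, and using \eqref{asm:dF} and \eqref{asm:dG},
\[
|\partial_tf|\leq C(1+|x|)^\gamma+C(1+|x|)^{\gamma-1}(1+|x|)^\gamma\leq C(1+|x|)^{2\gamma},
\]
so $|\partial_tf|^2\leq C(1+|x|)^{4\gamma}$. For $g_j$,
\[
|\partial_tg_j|^2\leq C\bigl((1+|x|)^{\gamma+1}+(1+|x|)^{\gamma+1}+(1+|x|)^{\gamma-1}(1+|x|)^{2\gamma}\bigr)\leq C(1+|x|)^{3\gamma-1},
\]
where we used $\gamma\geq1$ so that $\gamma+1\leq3\gamma-1$.

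This projection-invariance reduction is the only nontrivial point; everything else is routine chain-rule bookkeeping.
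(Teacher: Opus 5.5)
Your proof is correct and follows essentially the same route as the paper: the chain rule, the uniform bound on $\partial_u\Psi_t(u)=J(t,x)^{-1}A_t$, and projection invariance to turn the $(1+|u|)^\gamma$ control of the time derivative of the reconstruction into $(1+|x|)^\gamma$. The only cosmetic difference is that the paper writes $\partial_t\Psi_t(u)=\partial_t\Psi_t(Pu)$ and applies \eqref{lem-v-rs3} at $Pu$ with $|Pu|=|Px|\leq|x|$, whereas you reduce to $u\in\operatorname{Im}(P)$ and re-derive the bound from the implicit-differentiation formula using $|\hat V(t,u)|=|Qx|\leq|x|$.
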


\begin{proof}
      Note that $\Psi_t(u)\in\mathcal M_t$, then $Px=P \Psi_t(u)=Pu$.
      Moreover, \eqref{eq:DuV-DuPsi}, Assumption \ref{ass:index1}, and Lemma \ref{lem:prop:A} imply $\partial_u\Psi_t(u)=J(t,x)^{-1}A_t$ and $|\partial_u\Psi_t(u)|\leq C$. Since $\Psi_t(u)=\Psi_t(Pu)$, differentiation with respect to $t$ and Lemma \ref{lem-v} give
      \begin{equation*}
            \partial_t\Psi_t(u)=\partial_t\Psi_t(Pu),
            \quad
            |\partial_t\Psi_t(u)| \leq C(1+|Pu|)^\gamma \leq C(1+|x|)^\gamma .
      \end{equation*}
      By Assumption \ref{ass:time-regularity} and Remark \ref{rm:FG}, using the uniform boundedness of the coefficients and their derivatives at the origin on the compact interval $[0,T]$, we obtain
      \begin{gather}
            |F(t,x)|^2+|\partial_xF(t,x)|^2+|\partial_tF(t,x)|^2
            \leq C(1+|x|)^{2\gamma},\label{eq:red-growth-F-basic}
            \\
            |G_j(t,x)|^2+|\partial_tG_j(t,x)|^2
            \leq C(1+|x|)^{\gamma+1},
            \quad
            |\partial_xG_j(t,x)|^2
            \leq C(1+|x|)^{\gamma-1}.
            \label{eq:red-growth-DxG-bound}
      \end{gather}
      For the spatial derivatives, the chain rule gives
      \begin{equation}\label{eq:reduced-spatial-chain-rule}
            \partial_ug_j(t,u)
            =A_t^-\partial_xG_j(t,x)\partial_u\Psi_t(u),
            \quad
            \partial_uf(t,u)
            =A_t^-\partial_xF(t,x)\partial_u\Psi_t(u).
      \end{equation}
      Combining \eqref{eq:reduced-spatial-chain-rule} with Lemma \ref{lem:prop:A}, \eqref{eq:DuV-DuPsi}, \eqref{eq:red-growth-F-basic} and \eqref{eq:red-growth-DxG-bound} gives the spatial-derivative bounds in \eqref{eq:red-growth-gu} and \eqref{eq:red-growth-fu}.

      For the time derivatives, again by the chain rule,
      \begin{gather}
            \partial_tg_j(t,u)
            =
            (A_t^-)'G_j(t,x)
            +
            A_t^-\left(
            \partial_tG_j(t,x)+\partial_xG_j(t,x)\partial_t\Psi_t(u)
            \right),\label{eq:gt-time-derivative}
            \\
            \partial_tf(t,u)
            =
            (A_t^-)'F(t,x)
            +
            A_t^-\left(
            \partial_tF(t,x)+\partial_xF(t,x)\partial_t\Psi_t(u)
            \right). \label{eq:ft-time-derivative}
      \end{gather}
      Hence, using Lemmas \ref{lem:prop:A} and \ref{lem-v}, together with \eqref{eq:red-growth-F-basic} and \eqref{eq:red-growth-DxG-bound}, we have
      \begin{align*}
            \left|\partial_tg_j(t,u)\right|^2
            \leq&~
            C(1+|x|)^{\gamma+1}
            +
            C(1+|x|)^{\gamma-1}(1+|x|)^{2\gamma}
            \leq C(1+|x|)^{3\gamma-1},
            \\
            \left|\partial_tf(t,u)\right|^2
            \leq&~
            C(1+|x|)^{2\gamma}
            +
            C(1+|x|)^{2\gamma}(1+|x|)^{2\gamma}
            \leq C(1+|x|)^{4\gamma}.
      \end{align*}
      Finally, the definitions of $f$ and $g$, Lemma \ref{lem:prop:A}, and \eqref{eq:red-growth-F-basic}--\eqref{eq:red-growth-DxG-bound} imply
      \begin{equation*}
            |f(t,u)|^2 = |A_t^-F(t,x)|^2 \leq C(1+|x|)^{2\gamma}, 
            \quad 
            |g(t,u)|^2 =|A_t^-G(t,x)|^2 \leq C(1+|x|)^{\gamma+1}.
      \end{equation*}
      This proves \eqref{eq:red-growth-gu}--\eqref{eq:red-growth-fg}.
\end{proof}

The following lemma provides polynomially weighted difference estimates for the first-order derivatives of the reduced coefficients. These estimates will be used to control the Taylor remainder terms in the local truncation error analysis.

\begin{lemma}\label{lem:reduced-derivative-difference-estimates}
Suppose that Assumptions \ref{asm:At}, \ref{ass:index1}, \ref{asm:FG}, and \ref{ass:time-regularity} hold. Then there exists a constant $C>0$ such that, for any $s,t\in[0,T]$, $u,v\in\R^d$, and $j=1,\cdots,m$, with $x:=\Psi_t(u)$ and $y:=\Psi_s(v)$, one has
      \begin{align}
            \left|
            \partial_ug_j(t,u)
            -
            \partial_ug_j(s,v)
            \right|^2
            \leq&~
            C(1+|x|+|y|)^{3\gamma-1}
            \big(|x-y|^2+|t-s|^2\big),
            \label{eq:red-diff-gu}
            \\
            \left|
            \partial_tg_j(t,u)
            -
            \partial_tg_j(s,v)
            \right|^2
            \leq&~
            C(1+|x|+|y|)^{7\gamma-3}
            \big(|x-y|^2+|t-s|^2\big),
            \label{eq:red-diff-gt}
            \\
            \left|
            \partial_uf(t,u)
            -
            \partial_uf(s,v)
            \right|^2
            \leq&~
            C(1+|x|+|y|)^{4\gamma}
            \big(|x-y|^2+|t-s|^2\big),
            \label{eq:red-diff-fu}
            \\
            \left|
            \partial_tf(t,u)
            -
            \partial_tf(s,v)
            \right|^2
            \leq&~
            C(1+|x|+|y|)^{8\gamma-2}
            \big(|x-y|^2+|t-s|^2\big).
            \label{eq:red-diff-ft}
      \end{align}
\end{lemma}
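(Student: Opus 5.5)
The plan is to argue directly from the chain-rule representations \eqref{eq:reduced-spatial-chain-rule}, \eqref{eq:gt-time-derivative} and \eqref{eq:ft-time-derivative}. Every term is then a product of a few basic factors:
\begin{itemize}
\item $A_t^-$, $(A_t^-)'$ and $A_t$, which are bounded and Lipschitz in $t$ by Lemma \ref{lem:prop:A};
\item $F$, $G_j$ and their derivatives evaluated at $x=\Psi_t(u)$;
\item $\partial_u\Psi_t(u)=J(t,x)^{-1}A_t$;
\item $\partial_t\Psi_t(u)=\partial_t\hat V(t,Pu)$.
\end{itemize}
For each product I will telescope, $a_1a_2a_3-b_1b_2b_3=(a_1-b_1)a_2a_3+b_1(a_2-b_2)a_3+b_1b_2(a_3-b_3)$. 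Each difference of factors is bounded by a polynomial weight in $(1+|x|+|y|)$ times $(|x-y|+|t-s|)$, and each undifferenced factor by its growth bound from Lemma \ref{lem:reduced-growth-estimates}, \eqref{eq:red-growth-F-basic} and \eqref{eq:red-growth-DxG-bound}. Since $Px=Pu$ and $Py=Pv$, we have $|Pu|\le|x|$, so weights in $u$ and $v$ convert to weights in $x$ and $y$. Since $1+|x|+|y|\ge1$, lower exponents can always be raised to the stated ones.

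The key auxiliary estimate is the difference of $\partial_u\Psi$. Using $J(t,x)^{-1}-J(s,y)^{-1}=J(t,x)^{-1}\bigl(J(s,y)-J(t,x)\bigr)J(s,y)^{-1}$, the bound $|J^{-1}|\le L_J$, Lemma \ref{lem:prop:A} and \eqref{asm:FG-absIa}, I get
\[
|\partial_u\Psi_t(u)-\partial_u\Psi_s(v)|\le C\big((1+|x|+|y|)^{\gamma-2}|x-y|+(1+|x|+|y|)^{\gamma}|t-s|\big).
\]
For the time factor I use \eqref{lem-v-rs3} with $Pu$ and $Pv$, together with $|Pu-Pv|\le|x-y|$. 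This gives $|\partial_t\Psi_t(u)-\partial_t\Psi_s(v)|\le C(1+|x|+|y|)^{3\gamma-1}(|x-y|+|t-s|)$.

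The estimates then go as follows.
\begin{itemize}
\item \textbf{\eqref{eq:red-diff-gu}.} Combine \eqref{asm:FG-absKa} (squared weight $(\cdot)^{\gamma-3}|x-y|^2+(\cdot)^{\gamma+1}|t-s|^2$) with $|\partial_xG_j|^2\le C(\cdot)^{\gamma-1}$ and the squared $\partial_u\Psi$ difference, which is at most $(\cdot)^{2\gamma}$. The worst product is $(\cdot)^{\gamma-1}(\cdot)^{2\gamma}=(\cdot)^{3\gamma-1}$, which matches the claim.
\item \textbf{\eqref{eq:red-diff-fu}.} The same scheme applies with \eqref{asm:FG-absIa} and $|\partial_xF|\le C(\cdot)^{\gamma-1}$. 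The squared products are at most $(\cdot)^{2\gamma}$ and $(\cdot)^{2\gamma-2}(\cdot)^{2\gamma}$, hence at most $(\cdot)^{4\gamma}$.
\item \textbf{Time derivatives.} The worst term in \eqref{eq:gt-time-derivative} and \eqref{eq:ft-time-derivative} is $A_t^-\partial_xG_j(t,x)\partial_t\Psi_t(u)$ (resp. with $F$). Its telescoped pieces are the $\partial_xG_j$ difference times $|\partial_t\Psi|^2\le C(\cdot)^{2\gamma}$, and $|\partial_xG_j|^2\le C(\cdot)^{\gamma-1}$ times the squared $\partial_t\Psi$ difference $(\cdot)^{6\gamma-2}$. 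These give $(\cdot)^{7\gamma-3}$ for $g$ and $(\cdot)^{2\gamma-2+6\gamma-2}\le(\cdot)^{8\gamma-2}$ for $f$.
\item \textbf{Remaining time-derivative terms.} The terms $(A_t^-)'G_j$ and $A_t^-\partial_tG_j$ use \eqref{asm:FG-absG} and \eqref{asm:FG-absMa}; the analogous $F$ terms use \eqref{asm:FG-absFa} and \eqref{asm:FG-absJa}. Their weights are lower.
\end{itemize}

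The main obstacle is the exponent bookkeeping in the time-derivative estimates. In particular, I must check that the $|t-s|$ contributions never exceed the stated exponents. For example, $|\partial_xG_j(t,x)-\partial_xG_j(s,y)|^2|\partial_t\Psi|^2$ yields $(\cdot)^{\gamma+1+2\gamma}=(\cdot)^{3\gamma+1}$, and this needs $3\gamma+1\le7\gamma-3$, which holds since $\gamma\ge1$. All other comparisons likewise reduce to $\gamma\ge1$.
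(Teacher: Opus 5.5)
Your proposal is correct and follows essentially the same route as the paper. Both arguments use the chain-rule representations of $\partial_u g_j,\partial_t g_j,\partial_u f,\partial_t f$, the resolvent identity for $J^{-1}$ to control $J(t,x)^{-1}A_t-J(s,y)^{-1}A_s$, the estimate \eqref{lem-v-rs3} applied at $Pu,Pv$ for the $\partial_t\Psi$ difference, and factor-by-factor telescoping whose exponent comparisons reduce to $\gamma\ge 1$. The one minor difference is that you keep the sharper weight $(1+|x|+|y|)^{\gamma-2}$ on $|x-y|$ in the $\partial_u\Psi$ difference, whereas the paper coarsens it to $(1+|x|+|y|)^{\gamma}$; this does not affect the stated exponents.
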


\begin{proof}
      Note that $\Psi_t(u)\in\mathcal M_t$ and $\Psi_s(v)\in\mathcal M_t$, then 
      $$Px=P \Psi_t(u)=Pu, \quad Py=P \Psi_s(v)=Pv.$$
      By \eqref{eq:DuV-DuPsi}, $\partial_u\Psi_t(u)=J(t,x)^{-1}A_t$ and $\partial_u\Psi_s(v)=J(s,y)^{-1}A_s$. Using Assumptions \ref{ass:index1} and \ref{asm:FG-absIa}, we obtain
      \begin{align*}
            |J(t,x)^{-1}-J(s,y)^{-1}|
            \leq&~
            |J(t,x)^{-1}| \, \big| J(s,y)-J(t,x)\big| \, |J(s,y)^{-1}|
            \notag\\
            \leq&~
            L_J^2 |A_t+R\partial_x F(t,x) -A_s-R\partial_x F(s,y) |
            \notag\\
            \leq&~
            C \big(|t-s|+ (1+|x|+| y|)^{\gamma-2} | x 
            - y |+(1+|x|+|y|)^{\gamma}|t-s| \big)
            \notag\\
            \leq&~
            C(1+|x|+|y|)^\gamma
            \big(|x-y|+|t-s|\big).
      \end{align*}
      This together with Lemma \ref{lem:prop:A} yields
      \begin{align}\label{eq:red-diff-B}
            |J(t,x)^{-1}A_t-J(s,y)^{-1}A_s|
            \leq&~
            |J(t,x)^{-1}|\,|A_t-A_s|
            +
            |J(t,x)^{-1}|\,
            |J(t,x)-J(s,y)|\,
            |J(s,y)^{-1}|\,|A_s|
            \notag\\
            \leq&~
            C(1+|x|+|y|)^\gamma
            \big(|x-y|+|t-s|\big).
      \end{align}
      Also, differentiating the first relation in \eqref{eq:projection-invariant-extension} with respect to time and applying \eqref{lem-v-rs3} at $Pu$ and $Pv$ give
      \begin{align}\label{eq:red-diff-DtPsi}
            |\partial_t\Psi_t(u)-\partial_t\Psi_s(v)|
            =&
            |\partial_t\Psi_t(Pu)-\partial_t\Psi_s(Pv)|
            \notag\\
            \leq&~
            C(1+|Px|+|Py|)^{3\gamma-1}
            \big(|Px-Py|+|t-s|\big)
            \notag\\
            \leq&~
            C(1+|x|+|y|)^{3\gamma-1}
            \big(|x-y|+|t-s|\big).
      \end{align}
      We first prove \eqref{eq:red-diff-gu}. By \eqref{eq:reduced-spatial-chain-rule} and \eqref{eq:DuV-DuPsi},
      \begin{align*}
            &~\left|\partial_ug_j(t,u)-\partial_ug_j(s,v)\right|^2
            \\\leq&~
            C|A_t^--A_s^-|^2|\partial_xG_j(t,x)|^2|J(t,x)^{-1}A_t|^2
            +
            C|A_s^-|^2|\partial_xG_j(t,x)-\partial_xG_j(s,y)|^2|J(t,x)^{-1}A_t|^2
            \\
            &~+
            C|A_s^-|^2|\partial_xG_j(s,y)|^2|J(t,x)^{-1}A_t-J(s,y)^{-1}A_s|^2 .
      \end{align*}
      This together with \eqref{asm:FG-absKa}, \eqref{asm:dG}, \eqref{eq:red-diff-B}, and Lemma \ref{lem:prop:A} yields \eqref{eq:red-diff-gu}. 
      For $\partial_uf$, the second formula in \eqref{eq:reduced-spatial-chain-rule} and the same decomposition, together with \eqref{asm:FG-absIa}, \eqref{asm:dF}, \eqref{eq:red-diff-B}, and Lemma \ref{lem:prop:A} yield \eqref{eq:red-diff-fu}. The exponent $4\gamma$ results from combining the polynomial growth of $\partial_xF$ with the polynomial difference bound for the reconstruction derivative $J(t,x)^{-1}A_t$.

      It remains to estimate the time derivatives. For $g_j$, applying \eqref{eq:gt-time-derivative} at $(t,u)$ and $(s,v)$ gives
      \begin{align*}
            \left|
            \partial_tg_j(t,u)
            -
            \partial_tg_j(s,v)
            \right|^2
            \leq&~
            C\left|
            (A_t^-)'G_j(t,x)-(A_s^-)'G_j(s,y)
            \right|^2
            +
            C\left|
            A_t^-\partial_tG_j(t,x)-A_s^-\partial_tG_j(s,y)
            \right|^2
            \\
            &~+
            C\left|
            A_t^-\partial_xG_j(t,x)\partial_t\Psi_t(u)
            -
            A_s^-\partial_xG_j(s,y)\partial_t\Psi_s(v)
            \right|^2 .
      \end{align*}
      By Lemma \ref{lem:prop:A}, \eqref{asm:FG-absG} and \eqref{asm:bound_FG},
      \begin{align*}
            \left|
            (A_t^-)'G_j(t,x)-(A_s^-)'G_j(s,y)
            \right|^2
            \leq&~
            C|(A_t^-)'-(A_s^-)'|^2|G_j(t,x)|^2
            +C|(A_s^-)'|^2|G_j(t,x)-G_j(s,y)|^2
            \\\leq&~
            C(1+|x|+|y|)^{2\gamma}
            \big(|x-y|^2+|t-s|^2\big).
      \end{align*}
      Similarly, Lemma \ref{lem:prop:A}, \eqref{asm:FG-absMa} and \eqref{asm:dG} imply
      \begin{align*}
            \left|
            A_t^-\partial_tG_j(t,x)-A_s^-\partial_tG_j(s,y)
            \right|^2
            \leq&~
            C|A_t^--A_s^-|^2|\partial_tG_j(t,x)|^2
            +C|A_s^-|^2|\partial_tG_j(t,x)-\partial_tG_j(s,y)|^2
            \\\leq&~
            C(1+|x|+|y|)^{2\gamma}
            \big(|x-y|^2+|t-s|^2\big).
      \end{align*}
      For the last difference, we split it according to the three factors. Using \eqref{asm:FG-absKa}, \eqref{asm:dG}, \eqref{eq:red-diff-DtPsi}, Lemma \ref{lem:prop:A}, and the estimate $|\partial_t\Psi_t(u)|\leq C(1+|x|)^\gamma$ as in Lemma \ref{lem:reduced-growth-estimates}, we derive
      \begin{align*}
            &~\left|A_t^-\partial_xG_j(t,x)\partial_t\Psi_t(u)
            - A_s^-\partial_xG_j(s,y)\partial_t\Psi_s(v)\right|^2
            \\\leq&~
            C|A_t^--A_s^-|^2
            |\partial_xG_j(t,x)|^2
            |\partial_t\Psi_t(u)|^2
            +
            C|A_s^-|^2
            |\partial_xG_j(t,x)-\partial_xG_j(s,y)|^2
            |\partial_t\Psi_t(u)|^2
            \\&~+
            C|A_s^-|^2
            |\partial_xG_j(s,y)|^2
            |\partial_t\Psi_t(u)-\partial_t\Psi_s(v)|^2
            \\\leq&~
            C(1+|x|+|y|)^{3\gamma-1}|t-s|^2
            +
            C(1+|x|+|y|)^{3\gamma-3}|x-y|^2
            +C(1+|x|+|y|)^{3\gamma+1}|t-s|^2
            \\&~+
            C(1+|x|+|y|)^{7\gamma-3}
            \big(|x-y|^2+|t-s|^2\big)
            \\\leq&~
            C(1+|x|+|y|)^{7\gamma-3}
            \big(|x-y|^2+|t-s|^2\big).
      \end{align*}
      Combining the above three estimates gives \eqref{eq:red-diff-gt}.

      It remains to prove \eqref{eq:red-diff-ft}. By \eqref{eq:ft-time-derivative},
      \begin{align*}
            \partial_tf(t,u) - \partial_tf(s,v)
            =&~
            \big((A_t^-)'-(A_s^-)'\big)F(t,x)
            +
            (A_s^-)'\big(F(t,x)-F(s,y)\big)
            \\
            &~+
            (A_t^- - A_s^-)\partial_tF(t,x)
            +
            A_s^-\big(\partial_tF(t,x)-\partial_tF(s,y)\big)
            \\
            &~+
            (A_t^- - A_s^-)\partial_xF(t,x)\partial_t\Psi_t(u)
            +
            A_s^-\big(\partial_xF(t,x)-\partial_xF(s,y)\big)\partial_t\Psi_t(u)
            \\
            &~+
            A_s^-\partial_xF(s,y)
            \big(\partial_t\Psi_t(u)-\partial_t\Psi_s(v)\big).
      \end{align*}
      Hence, by Lemma \ref{lem:prop:A}, Lemma \ref{lem-v}, and Assumption \ref{ass:time-regularity},
      \begin{align*}
            &~\left|\partial_tf(t,u) - \partial_tf(s,v)\right|^2
            \\\leq&~
            C|t-s|^2|F(t,x)|^2
            +
            C|F(t,x)-F(s,y)|^2
            +
            C|t-s|^2|\partial_tF(t,x)|^2
            +
            C|\partial_tF(t,x)-\partial_tF(s,y)|^2
            \\&~+
            C|t-s|^2
            |\partial_xF(t,x)|^2
            |\partial_t\Psi_t(u)|^2
            +
            C|\partial_xF(t,x)-\partial_xF(s,y)|^2
            |\partial_t\Psi_t(u)|^2
            \\
            &~+
            C|\partial_xF(s,y)|^2
            |\partial_t\Psi_t(u)-\partial_t\Psi_s(v)|^2
            \\\leq&~
            C(1+|x|+|y|)^{8\gamma-2}
            \big(|x-y|^2+|t-s|^2\big).
      \end{align*}
      Therefore \eqref{eq:red-diff-ft} follows.
\end{proof}

\section{Stochastic theta Milstein method and its mean square convergence}\label{sec:orderAt}
The stochastic theta Milstein method analyzed below is defined by \eqref{eq:ABEM}. Under Assumptions \ref{asm:At}, \ref{ass:index1}, \ref{asm:FG}, and \ref{ass:time-regularity}, we establish its well-posedness, constraint preserving property, and mean square convergence rate. Proposition \ref{prop:exact-solution-constraint} ensures that the exact solution satisfies $X_t\in\mathcal M_t$ for all $t\in[0,T]$. Moreover, for any $t\in[0,T]$ and $x\in\mathcal M_t$,
\begin{gather}\label{def:Qx=v}
      x = Px+\hat{V}(t,Px),
      \quad Qx = \hat{V}(t,Px),
      \\\label{def:Ff}
      A_t^- F(t,x) = f(t,Px),
      \quad
      A_t^- G(t,x) = g(t,Px).
\end{gather}
Throughout this section, let $\theta\in[\frac12,1]$ and define
\begin{equation}\label{eq:stepsize-threshold}
      \bar\Delta
      :=
      \min\left\{
      \Delta_0,
      \frac{1}{4\theta L_1(1+\hat L^2)}
      \right\}.
\end{equation}
We consider $\Delta=T/K$ satisfying $0<\Delta\leq\bar\Delta$.

\subsection{SDAE-level Milstein coefficient and reduced formulation}

Fix $t\in[0,T]$ and $u\in\R^d$, and set $x:=\Psi_t(u)\in\mathcal M_t$. By \eqref{eq:DuV-DuPsi}, the chain rule, $A_tA_t^-=I-R$, and $RG_{j_1}(t,x)=0$, we obtain
\begin{align}\label{eq:reduced-milstein-coefficient}
      \partial_ug_{j_2}(t,u)g_{j_1}(t,u)
      =&~
      A_t^-\partial_xG_{j_2}(t,x)
      \partial_u\Psi_t(u)A_t^-G_{j_1}(t,x)
      \notag\\
      =&~
      A_t^-\partial_xG_{j_2}(t,x)
      J(t,x)^{-1}(I-R)G_{j_1}(t,x)
      \notag\\
      =&~
      A_t^-\widetilde{\mathcal L}^{j_1}G_{j_2}(t,x).
\end{align}
Thus, after projection by $A_t^-$, the SDAE-level correction in \eqref{eq:ABEM} coincides exactly with the standard Milstein coefficient of the reduced SDE. Moreover, differentiating the $j_2$th column of \eqref{eq:noise-compatibility} with respect to $x$ and using that $R$ is independent of $x$ gives
\begin{equation}\label{eq:R-tildeL-zero}
      R\widetilde{\mathcal L}^{j_1}G_{j_2}(t,x)
      =
      R\partial_xG_{j_2}(t,x)
      J(t,x)^{-1}G_{j_1}(t,x)
      =0.
\end{equation}
Hence the Milstein correction has no algebraic component and is compatible with the constraint.

\subsection{Well-posedness of numerical solutions}
We now prove the existence and uniqueness of numerical solutions $\{x_k\}_{k=0,1,\cdots,K}$ satisfying $x_k\in\mathcal M_{t_k}$.

\begin{lemma}\label{lem:def}
Suppose that Assumptions \ref{asm:At}, \ref{ass:index1}, \ref{asm:FG}, and \ref{ass:time-regularity} hold. Then, for every $0<\Delta\leq\bar\Delta$, the stochastic theta Milstein method \eqref{eq:ABEM} is well defined and $x_k\in\mathcal M_{t_k}$ for $k=0,1,\cdots,K$.
\end{lemma}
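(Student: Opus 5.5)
The proof goes by induction on $k$. Assume $x_k$ is $\mathcal F_{t_k}$-measurable and $x_k\in\mathcal M_{t_k}$; the case $k=0$ holds by Assumption \ref{ass:index1}. We split the implicit equation \eqref{eq:ABEM} for $x_{k+1}$ into its $A_{t_k}^-$-projection and its $R$-projection. The idea is to show that the whole system is equivalent to a monotone equation for $u:=Px_{k+1}$ together with the algebraic reconstruction $Qx_{k+1}=\hat V(t_{k+1},u)$.

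\emph{Step 1 (algebraic part).} Multiply \eqref{eq:ABEM} by $R$ and use $RA_{t_k}=0$, $RR=R$ and $RA_{t_k}=0$ in the term $RA_{t_k}A_{t_{k+1}}^-$. The noise-compatibility condition \eqref{eq:noise-compatibility} removes the $G$ term, and \eqref{eq:R-tildeL-zero} removes the Milstein term. The hypothesis $x_k\in\mathcal M_{t_k}$ gives $RF(t_k,x_k)=0$. What remains is $\theta\Delta\,RF(t_{k+1},x_{k+1})=0$. Since $\theta>0$, this says exactly $x_{k+1}\in\mathcal M_{t_{k+1}}$. Because $A_{t_{k+1}}Qx_{k+1}=0$ and \eqref{eq:v(t,x)} has a unique solution, this is in turn equivalent to $Qx_{k+1}=\hat V(t_{k+1},Px_{k+1})$, that is, $x_{k+1}=\Psi_{t_{k+1}}(Px_{k+1})$.

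\emph{Step 2 (differential part).} Multiply \eqref{eq:ABEM} by $A_{t_k}^-$. We use $A_{t_k}^-A_{t_k}=P$, $A_{t_k}^-R=0$, and $A_{t_k}^-A_{t_k}A_{t_{k+1}}^-=PA_{t_{k+1}}^-=A_{t_{k+1}}^-$. Together with \eqref{def:Ff}, \eqref{eq:reduced-milstein-coefficient} and $x_k\in\mathcal M_{t_k}$, this yields the reduced equation
\begin{equation*}
u-\theta\Delta f(t_{k+1},u)=b_k,\quad b_k:=u_k+(1-\theta)f(t_k,u_k)\Delta+g(t_k,u_k)\Delta W_k+\sum_{j_1,j_2}\partial_ug_{j_2}g_{j_1}(t_k,u_k)I^{t_k,t_{k+1}}_{j_1,j_2},
\end{equation*}
with $u_k=Px_k$. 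Here $b_k\in\operatorname{Im}(P)$ is $\mathcal F_{t_{k+1}}$-measurable. Conversely, suppose $u$ solves this equation and set $x_{k+1}:=\Psi_{t_{k+1}}(u)$. Then the $A_{t_k}^-$- and $R$-projections of \eqref{eq:ABEM} both hold. Since $I=A_{t_k}A_{t_k}^-+R$ and $A_{t_k}A_{t_k}^-A_{t_k}=A_{t_k}$, this means \eqref{eq:ABEM} itself holds. The SDAE step and the reduced step are therefore equivalent.

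\emph{Step 3 (solvability, the main point).} Define $\Phi(u):=u-\theta\Delta f(t_{k+1},u)$ on $\R^d$. By \eqref{lem:fg:rs111}, after discarding the nonnegative $g$-term, we get
\begin{equation*}
\langle u-v,\Phi(u)-\Phi(v)\rangle\geq \bigl(1-\theta\Delta L_1(1+\hat L^2)\bigr)|u-v|^2\geq \tfrac34|u-v|^2
\end{equation*}
for $\Delta\leq\bar\Delta$ by \eqref{eq:stepsize-threshold}. The map $\Phi$ is continuous, since $f$ is continuous by Lemma \ref{lem-v} and the continuity of $F$. A continuous, strongly monotone map on $\R^d$ is a homeomorphism onto $\R^d$ by the Browder--Minty (uniform monotonicity) theorem. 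Hence $\Phi(u)=b_k$ has a unique solution $u_{k+1}=\Phi^{-1}(b_k)$. The inverse $\Phi^{-1}$ is Lipschitz continuous, so $u_{k+1}$ is $\mathcal F_{t_{k+1}}$-measurable, and $x_{k+1}=\Psi_{t_{k+1}}(u_{k+1})$ is measurable as well. It remains to check that $u_{k+1}\in\operatorname{Im}(P)$, as required for it to be $Px_{k+1}$. This follows from $u_{k+1}=b_k+\theta\Delta f(t_{k+1},u_{k+1})$ together with \eqref{eq:projection-invariant-extension2}.

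We also need uniqueness of $x_{k+1}$ in the original equation. Any solution $x$ of \eqref{eq:ABEM} gives, by Steps 1 and 2, a solution $Px$ of $\Phi(Px)=b_k$ with $x=\Psi_{t_{k+1}}(Px)$, so it coincides with the one constructed above. The delicate point is Step 3: one must confirm that the monotonicity constant from Lemma \ref{lem-v-3}, and not merely a one-sided estimate on $\operatorname{Im}(P)$, controls $\Phi$ on all of $\R^d$. This is exactly why the projection-invariant extensions \eqref{eq:projection-invariant-extension} are used.
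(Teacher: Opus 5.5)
Your proposal is correct and follows essentially the same route as the paper: split \eqref{eq:ABEM} into its $A_{t_k}^-$- and $R$-projections, solve the strongly monotone reduced equation $u-\theta\Delta f(t_{k+1},u)=b_k$ by the uniform monotonicity theorem with the same constant $\tfrac34$, reconstruct $x_{k+1}=\Psi_{t_{k+1}}(u_{k+1})$, and then argue uniqueness and adaptedness. The differences are only in ordering and notation. You derive $x_{k+1}\in\mathcal M_{t_{k+1}}$ from the $R$-projection before reducing, and you write $b_k$ in reduced variables via \eqref{eq:reduced-milstein-coefficient}. Your check that $u_{k+1}\in\operatorname{Im}(P)$, which the converse direction in Step 2 needs, corresponds to the paper's verification $Pu_{k+1}=u_{k+1}$.
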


\begin{proof}
We proceed by induction on $k$. Assumption \ref{ass:index1} gives $x_0\in\mathcal M_{t_0}$. Assume that $x_k\in\mathcal M_{t_k}$ and define
      \begin{align*}
            H(x)
            :=&~ 
            A_{t_{k}}x - \theta \big(R+A_{t_k}A_{t_{k+1}}^-\big)
            F(t_{k+1},x)\Delta 
            - \Big(A_{t_k}x_k 
           +
            (1-\theta) F(t_{k},x_{k})\Delta \\&~
           +G(t_k,x_k) \Delta W_k
            +\sum_{j_1,j_2=1}^m \widetilde{\mathcal{L}}^{j_1} G_{j_2}(t_k, x_k)  
            I_{j_1,j_2}^{t_k,t_{k+1}}\Big),
            \quad x \in \R^{d}.
      \end{align*}
      Then \eqref{eq:ABEM} is equivalent to $H(x_{k+1})=0$. Since $R=I-A_{t_k}A_{t_k}^-$, the equation $H(x)=0$ is equivalent to
      \begin{align*}
            \begin{cases}
                   A_{t_{k}}A_{t_{k}}^- H(x) = 0,
                   \\
                   RH(x)= 0.
            \end{cases}
      \end{align*}
      Because $A_{t_k}^-A_{t_k}A_{t_k}^-=A_{t_k}^-$, the first equation above is equivalent to $A_{t_k}^-H(x)=0$. Hence $H(x)=0$ is equivalent to
      \begin{align}\label{lem:def:A^-H}
             \begin{cases}
                   A_{t_{k}}^- H(x)= 0,
                   \\
                   R H(x)= 0.
            \end{cases}
      \end{align}
      We now show that \eqref{lem:def:A^-H} admits a unique solution $x=x_{k+1}$. Define $\Phi_{k+1}(u):=u-\theta\Delta f(t_{k+1},u)$. By \eqref{lem:fg:rs111}, for any $u,v\in\mathbb R^d$,
      \begin{align*}
            \langle u-v,\Phi_{k+1}(u)-\Phi_{k+1}(v)\rangle
            \geq
            \bigl(1-\theta\Delta L_1(1+\hat L^2)\bigr)|u-v|^2
            \geq\frac34|u-v|^2.
      \end{align*}
      Thus $\Phi_{k+1}$ is continuous and uniformly strongly monotone. The uniform monotonicity theorem in \cite[Theorem C.2]{MR1402909} therefore implies that the following equation
      \begin{align*}
            &~x - \theta f(t_{k+1},x) \Delta 
            - \big(P x_k + (1-\theta)A_{t_{k}}^-F(t_{k},x_{k}) \Delta
            \\&~
            + A_{t_{k}}^-G(t_{k},x_{k}) \Delta W_k+\sum_{j_1,j_2=1}^m 
            A_{t_{k}}^-\widetilde{\mathcal{L}}^{j_1} G_{j_2}(t_k, x_k)  
            I_{j_1,j_2}^{t_k,t_{k+1}}\big)
            = 0, \quad x \in \mathbb{R}^{d},
      \end{align*}
      has a unique solution $u_{k+1}\in\mathbb R^d$, namely,
      \begin{align}\label{lem:eq:u}
            &~u_{k+1} - \theta f(t_{k+1},u_{k+1}) \Delta 
            - \big(P x_k + (1-\theta)A_{t_{k}}^-F(t_{k},x_{k}) \Delta \notag
            \\&~+
            A_{t_{k}}^-G(t_{k},x_{k}) \Delta W_k+\sum_{j_1,j_2=1}^m 
            A_{t_{k}}^-\widetilde{\mathcal{L}}^{j_1} G_{j_2}(t_k, x_k)  
            I_{j_1,j_2}^{t_k,t_{k+1}}\big)
            = 0.
      \end{align}
      On the one hand, applying $Pf(t,x)=f(t,x)$ and $PA^-_t=A^-_t$ leads to
      \begin{align*}
            Pu_{k+1}
             =&~
            \theta Pf(t_{k+1},u_{k+1}) \Delta 
           +P^{2} x_k
           +(1-\theta) P A_{t_{k}}^-F(t_{k},x_{k}) \Delta 
            \\&~
           +PA_{t_{k}}^-G(t_{k},x_{k}) \Delta W_k+\sum_{j_1,j_2=1}^m 
            PA_{t_{k}}^-\widetilde{\mathcal{L}}^{j_1} G_{j_2}(t_k, x_k)  
            I_{j_1,j_2}^{t_k,t_{k+1}}
            \\
            =&~
            \theta f(t_{k+1},u_{k+1}) \Delta 
           +P x_k
           +(1-\theta)A_{t_{k}}^-F(t_{k},x_{k}) \Delta 
            \\&~
           +A_{t_{k}}^-G(t_{k},x_{k}) \Delta W_k+\sum_{j_1,j_2=1}^m 
            A_{t_{k}}^-\widetilde{\mathcal{L}}^{j_1} G_{j_2}(t_k, x_k)  
            I_{j_1,j_2}^{t_k,t_{k+1}}
            \\
             =&~
            u_{k+1} \in \operatorname{Im}(P).
      \end{align*}
      On the other hand, there exists a unique $v_{k+1} = \hat{V}(t_{k+1},u_{k+1}) \in \mathbb{R}^{d}$ such that
      \begin{align*}
            A_{t_{k+1}}v_{k+1} = 0,
            \quad 
            RF(t_{k+1},u_{k+1}+v_{k+1})=0.
      \end{align*}
      Consequently, $Pv_{k+1}=A_{t_{k+1}}^-A_{t_{k+1}}v_{k+1}=0$ and $Qv_{k+1}=(I-P)v_{k+1}=v_{k+1}$. Set $x_{k+1}:=u_{k+1}+v_{k+1}$. Then $Px_{k+1}=u_{k+1}$, $Qx_{k+1}=v_{k+1}$, and $x_{k+1}\in\mathcal M_{t_{k+1}}$. It follows from \eqref{lem:eq:u}, $A_t^-R=0$, and $A_{t_{k+1}}^-F(t_{k+1},x_{k+1})=f(t_{k+1},u_{k+1})$ that
      \begin{align*}
            A_{t_{k}}^- H(x_{k+1})
            =&~
            A_{t_{k}}^-A_{t_{k}}x_{k+1} 
            - \theta A_{t_{k}}^-
            \big(R+A_{t_k}A_{t_{k+1}}^-\big)
            F(t_{k+1},x_{k+1}) \Delta 
            -
            \big( A_{t_{k}}^-A_{t_k}x_k
            \\&~+ (1-\theta) A_{t_{k}}^-
            F(t_{k},x_{k}) \Delta  
           +A_{t_{k}}^-G(t_k,x_k) 
            \Delta W_k+\sum_{j_1,j_2=1}^m A_{t_{k}}^-
            \widetilde{\mathcal{L}}^{j_1} G_{j_2}(t_k, x_k)  
            I_{j_1,j_2}^{t_k,t_{k+1}}\big)
            \\
            =&~
            P x_{k+1} 
            - \theta A_{t_{k+1}}^-F(t_{k+1},x_{k+1}) \Delta 
            -\big( P x_k+ (1-\theta) A_{t_{k}}^-
            F(t_{k},x_{k}) \Delta  
            \\&~+ A_{t_{k}}^-G(t_k,x_k) \Delta W_k
            +\sum_{j_1,j_2=1}^m A_{t_{k}}^-
            \widetilde{\mathcal{L}}^{j_1} G_{j_2}(t_k, x_k)  
            I_{j_1,j_2}^{t_k,t_{k+1}}\big)
            \\
            =&~
            u_{k+1} - \theta f(t_{k+1},u_{k+1}) \Delta 
            - \big(P x_k+ (1-\theta)A_{t_{k}}^-
            F(t_{k},x_{k}) \Delta\\&~
           +A_{t_{k}}^-G(t_{k},x_{k}) \Delta W_k+
            \sum_{j_1,j_2=1}^m A_{t_{k}}^-
            \widetilde{\mathcal{L}}^{j_1} G_{j_2}(t_k, x_k)  
            I_{j_1,j_2}^{t_k,t_{k+1}}\big)
            \\
            =&~
            0.
      \end{align*}
      Furthermore, using \eqref{eq:R-tildeL-zero}, $R^2=R$, $RA_{t_k}=0$, and $RG(t,x)=0$, we have
      \begin{align*}
            R H(x_{k+1})
            =&~
            R\bigg(A_{t_{k}}x_{k+1} - 
            \theta \big(R+A_{t_k}A_{t_{k+1}}^-\big)
            F(t_{k+1},x_{k+1})\Delta 
            - \Big(A_{t_k}x_k+
            (1-\theta) F(t_{k},x_{k})\Delta 
            \\&~
           +G(t_k,x_k) \Delta W_k
            +\sum_{j_1,j_2=1}^m 
            \widetilde{\mathcal{L}}^{j_1} G_{j_2}(t_k, x_k)  
            I_{j_1,j_2}^{t_k,t_{k+1}}\Big)\bigg)
            \\
            =&~
            -\theta R F(t_{k+1},x_{k+1}) \Delta 
            -(1-\theta) R F(t_{k},x_{k}) \Delta
            -\sum_{j_1,j_2=1}^m R 
            \widetilde{\mathcal{L}}^{j_1} G_{j_2}(t_k, x_k)  
            I_{j_1,j_2}^{t_k,t_{k+1}}\\
            =&~0.
      \end{align*}
      Conversely, let $x\in\mathbb R^d$ satisfy $H(x)=0$. From $RH(x)=0$, $x_k\in\mathcal M_{t_k}$, $RG(t_k,x_k)=0$, and \eqref{eq:R-tildeL-zero}, we obtain $RF(t_{k+1},x)=0$. Hence $x\in\mathcal M_{t_{k+1}}$ and $x=Px+\hat V(t_{k+1},Px)$. Moreover, $A_{t_k}^-H(x)=0$ shows that $Px$ satisfies \eqref{lem:eq:u}. By uniqueness, $Px=u_{k+1}$ and therefore $x=x_{k+1}$. Thus \eqref{eq:ABEM} has a unique solution.

      Finally, since $\Phi_{k+1}^{-1}$ and $u\mapsto u+\hat V(t_{k+1},u)$ are continuous and the right-hand side of \eqref{lem:eq:u} is $\mathcal F_{t_{k+1}}$-measurable, both $u_{k+1}$ and $x_{k+1}$ are $\mathcal F_{t_{k+1}}$-measurable. Thus the numerical solution is adapted, and the induction is complete.
\end{proof}


\subsection{Local truncation error analysis}
The next lemma reduces the global mean square error to local residual estimates. The existence of the exact solution follows from Assumptions \ref{asm:At}, \ref{ass:index1}, \ref{asm:FG}, and \ref{ass:time-regularity}. The well-posedness and constraint preservation of the numerical solution are supplied by Lemma \ref{lem:def}.

\begin{lemma}\label{lem:error-reduction}
Let the assumptions of Lemma \ref{lem:def} hold. Then, for every $0<\Delta\leq\bar\Delta$, there exists a constant $C>0$, independent of $\Delta$, such that

      \begin{align*}
            \max_{1 \leq k \leq K} \mathbb{E} 
            \left[ |X_{t_k} - x_k|^2 \right] 
            \leq C \left( \sum_{k=1}^K \mathbb{E} 
            \left[ |{\mathcal{R}_k}|^2 \right]+
            \Delta^{-1} \sum_{k=1}^K \mathbb{E} 
            \left[ |\mathbb{E}\left[\mathcal{R}_k | \mathcal{F}_{t_{k-1}}\right]|^2 \right] \right),
      \end{align*}
where
      \begin{align}
            \mathcal{R}_k
            :=&~
            \theta
            \int_{t_{k-1}}^{t_k}
            \left(
                  f(s,U_s)
                  -
                  f(t_k,U_{t_k})
            \right) ds
            +
            (1-\theta)
            \int_{t_{k-1}}^{t_k}
            \left(
                  f(s,U_s)
                  -
                  f(t_{k-1},U_{t_{k-1}})
            \right) ds
            \notag
            \\
            &~+
            \int_{t_{k-1}}^{t_k}
            \left(
                  g(s,U_s)
                  -
                  g(t_{k-1},U_{t_{k-1}})
            \right) dW_s
            -
            \sum_{j_1,j_2=1}^m
            A_{t_{k-1}}^-
            \widetilde{\mathcal{L}}^{j_1}G_{j_2}
            (t_{k-1},X_{t_{k-1}})
            I_{j_1,j_2}^{t_{k-1},t_k}.
            \label{eq:local-residual}
      \end{align}

\end{lemma}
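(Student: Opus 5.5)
The plan is to reduce everything to the differential components and run a standard error-recursion argument for implicit theta-type schemes with a monotone drift. Set $u_k:=Px_k$ and $e_k:=U_{t_k}-u_k\in\operatorname{Im}(P)$. By Lemma \ref{lem:def}, $x_k\in\mathcal M_{t_k}$. By Proposition \ref{prop:exact-solution-constraint}, $X_{t_k}\in\mathcal M_{t_k}$. Hence both are reconstructed through $\Psi_{t_k}$, and \eqref{lem-v-rs0} with $t=s$ gives $|X_{t_k}-x_k|\le(1+\hat L)|e_k|$. It therefore suffices to bound $\max_k\mathbb E|e_k|^2$.

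\textbf{Error recursion.} Projecting the scheme by $A_{t_{k-1}}^-$ and using \eqref{def:Ff} and \eqref{eq:reduced-milstein-coefficient} (this is \eqref{lem:eq:u}), we get
\[
u_k=u_{k-1}+\theta f(t_k,u_k)\Delta+(1-\theta)f(t_{k-1},u_{k-1})\Delta+g(t_{k-1},u_{k-1})\Delta W_{k-1}+\sum_{j_1,j_2}\partial_ug_{j_2}g_{j_1}(t_{k-1},u_{k-1})I^{t_{k-1},t_k}_{j_1,j_2}.
\]
Writing \eqref{eq:inherent_U} on $[t_{k-1},t_k]$ and subtracting, we obtain
\[
e_k-\theta\Delta\big(f(t_k,U_{t_k})-f(t_k,u_k)\big)=e_{k-1}+(1-\theta)\Delta\,\delta f_{k-1}+\delta g_{k-1}\Delta W_{k-1}+\sum_{j_1,j_2}\delta L^{j_1j_2}_{k-1}I^{t_{k-1},t_k}_{j_1,j_2}+\mathcal R_k,
\]
where the $\delta$'s denote differences of the reduced coefficients, and of $\partial_ug_{j_2}g_{j_1}=A^-\widetilde{\mathcal L}^{j_1}G_{j_2}$, evaluated at $U_{t_{k-1}}$ and $u_{k-1}$.

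Take the squared norm of both sides. On the left, expand and use \eqref{lem:fg:rs111}: $-2\theta\Delta\langle e_k,\delta f_k\rangle\ge -2\theta\Delta L_1(1+\hat L^2)|e_k|^2$, and drop the nonnegative $\theta^2\Delta^2|\delta f_k|^2$. This gives the lower bound
\[
|e_k|^2-\theta\Delta L_1(1+\hat L^2)|e_k|^2-\theta\Delta\,\Big(2\langle e_k,\delta f_k\rangle+(p_1-1)|\delta g_k|^2\Big)_{\text{part}}+\theta\Delta(p_1-1)|\delta g_k|^2.
\]
The bookkeeping follows the usual theta-method trick: add $\theta\Delta\big(2\langle e_k,\delta f_k\rangle+(p_1-1)|\delta g_k|^2+\tfrac q2\Delta\sum|\delta L_k|^2\big)$ to both sides. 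This builds the telescoping quantity
\[
\mathcal E_k:=|e_k|^2-\theta\Delta\Big(2\langle e_k,\delta f_k\rangle+\ldots\Big).
\]

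\textbf{Expanding the right-hand side.} Take expectations of the square of the right-hand side. The martingale terms are orthogonal to $\mathcal F_{t_{k-1}}$-measurable quantities, and $\mathbb E|I_{j_1,j_2}|^2\le C\Delta^2$. The cross terms between $\Delta W$, $I$ and $\delta$'s are independent mean-zero. The terms $(1-\theta)^2\Delta^2|\delta f_{k-1}|^2$ are absorbed because $\theta\ge\frac12$ gives $(1-\theta)^2\le\theta^2$, so they cancel against $\theta^2\Delta^2|\delta f_{k-1}|^2$ from the previous step. The diffusion square $\Delta|\delta g_{k-1}|^2$ and the Milstein square $\tfrac12\Delta^2\sum|\delta L|^2$ are dominated through Assumption \ref{asm:FG}, with its weight $(p_1-1)$ and $\tfrac q2\Delta_0$ and $\Delta\le\Delta_0$, together with the monotone combination in \eqref{lem:fg:rs111}. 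This yields $\mathbb E\mathcal E_k\le(1+C\Delta)\mathbb E\mathcal E_{k-1}+C\Delta\mathbb E|e_{k-1}|^2+(\text{terms with }\mathcal R_k)$. Since $\Delta\le\bar\Delta$ gives $4\theta L_1(1+\hat L^2)\Delta\le1$, $\mathcal E_k$ is equivalent to $|e_k|^2$ up to constants.

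\textbf{Cross terms with $\mathcal R_k$.} The term $2\langle e_{k-1}+(1-\theta)\Delta\delta f_{k-1},\mathcal R_k\rangle$ has an $\mathcal F_{t_{k-1}}$-measurable first factor, so conditioning gives
\[
2\,\mathbb E\langle\cdot,\mathbb E[\mathcal R_k|\mathcal F_{t_{k-1}}]\rangle\le\Delta\,\mathbb E|\cdot|^2+\Delta^{-1}\mathbb E|\mathbb E[\mathcal R_k|\mathcal F_{t_{k-1}}]|^2 .
\]
The cross terms of $\mathcal R_k$ with the martingale parts are handled by Young's inequality as $\Delta\cdot$(their squares)$+C\,\mathbb E|\mathcal R_k|^2$. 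Their squares are already $O(\Delta)\mathbb E|\delta g|^2$ or $O(\Delta^2)\mathbb E|\delta L|^2$; bounding these requires the weight $p_1-1>1$ and the $q>1$ margin in Assumption \ref{asm:FG}. The term $|\mathcal R_k|^2$ contributes directly. The factor $\Delta\mathbb E|\delta f_{k-1}|^2$ arising from the first Young step appears to be the main obstacle, since $f$ is only one-sidedly Lipschitz. I would route it into the $\theta^2\Delta^2|\delta f|^2$ reservoir: carry $\theta^2\Delta^2|\delta f_k|^2$ inside $\mathcal E_k$ rather than discarding it, and use $\theta>1-\theta$ or, when $\theta=\frac12$, split the Young constant. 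Failing that, I would restructure the recursion by writing $e_k-\theta\Delta\delta f_k$ as the unknown, as is standard for split-step/theta methods. Finally, a discrete Gronwall inequality over $k\le K$ gives the stated bound.
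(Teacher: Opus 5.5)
\textbf{This is a gap.} Your architecture matches the paper's: reduction to $e_k=P(X_{t_k}-x_k)$ via \eqref{lem-v-rs0}, the same error recursion, Assumption \ref{asm:FG} paid for with Young weights $p_1-2$ and $q-1$, conditioning on $\mathcal F_{t_{k-1}}$ for the measurable cross term, and discrete Gronwall. But the step you yourself call the main obstacle is left open, and the bookkeeping you sketch around it does not close as written.

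Your $\mathcal E_k$ is never pinned down, and the version your text describes has three problems.
\begin{itemize}
\item You first discard $\theta^2\Delta^2|\delta f_k|^2$ and then cancel $(1-\theta)^2\Delta^2|\delta f_{k-1}|^2$ against it.
\item Suppose the nonnegative diffusion and Milstein terms that $\mathcal E_k$ subtracts from $|e_k|^2$ are likewise discarded. Then rewriting $|e_{k-1}|^2$ through $\mathcal E_{k-1}$ puts $\theta(p_1-1)\Delta|\delta g_{k-1}|^2$ back on the right, on top of the It\^o term $\Delta|\delta g_{k-1}|^2$. This exceeds what Assumption \ref{asm:FG} pays for: at $\theta=1$ at least $\Delta|\delta g_{k-1}|^2$ survives, and nothing controls it since $g$ is not globally Lipschitz.
\item Your remedy for the $\Delta^3|\delta f_{k-1}|^2$ term is off target. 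At $\theta=\frac12$ the margin $\theta^2-(1-\theta)^2$ vanishes. No choice of Young constant removes the term, because the weight on the $\mathcal F_{t_{k-1}}$-measurable factor must stay of order $\Delta$ to leave only $\Delta^{-1}$ in front of $\mathbb E[|\mathbb E[\mathcal R_k|\mathcal F_{t_{k-1}}]|^2]$.
\end{itemize}
Also, $\mathcal E_k$ is not bounded above by $C|e_k|^2$, since $f$ grows superlinearly; only the lower bound holds, and only it is needed.

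\textbf{The fix is your own fallback, which is exactly what the paper does.} Take $\mathcal E_k:=|e_k-\theta\Delta\,\delta f_k|^2$ and discard nothing from it. Then
\begin{equation*}
|e_{k-1}+(1-\theta)\Delta\,\delta f_{k-1}|^2=\mathcal E_{k-1}+2\Delta\langle e_{k-1},\delta f_{k-1}\rangle+(1-2\theta)\Delta^2|\delta f_{k-1}|^2 .
\end{equation*}
The last term is nonpositive precisely because $\theta\geq\frac12$. The middle term is the full monotone term that Assumption \ref{asm:FG}, with $\Delta\leq\Delta_0$, pairs with $(p_1-1)\Delta|\delta g_{k-1}|^2$ (It\^o plus Young) and $\frac q2\Delta^2\sum_{j_1,j_2}|\delta L^{j_1j_2}_{k-1}|^2$. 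This leaves $L_1(1+\hat L)^2\Delta|e_{k-1}|^2$, so no diffusion terms need to enter $\mathcal E_k$.

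For the cross term, write $e_{k-1}+(1-\theta)\Delta\,\delta f_{k-1}=\frac{\theta-1}{\theta}(e_{k-1}-\theta\Delta\,\delta f_{k-1})+\frac1\theta e_{k-1}$. Young's inequality with weight $\Delta$ then gives $\Delta\,\mathbb E[\mathcal E_{k-1}]+\Delta\,\mathbb E[|e_{k-1}|^2]+\frac{1+\theta^2}{\theta^2}\Delta^{-1}\mathbb E[|\mathbb E[\mathcal R_k|\mathcal F_{t_{k-1}}]|^2]$, with no bare $\Delta^3|\delta f_{k-1}|^2$. Equivalently, $\Delta^2|\delta f_{k-1}|^2\leq2\theta^{-2}(\mathcal E_{k-1}+|e_{k-1}|^2)$, so that term is absorbed by the $(1+C\Delta)$ factor for every $\theta\in[\frac12,1]$, with no strict margin needed.

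This gives
\begin{equation*}
\mathbb E[\mathcal E_k]\leq(1+\Delta)\mathbb E[\mathcal E_{k-1}]+C\Delta\,\mathbb E[|e_{k-1}|^2]+C\,\mathbb E[|\mathcal R_k|^2]+C\Delta^{-1}\mathbb E[|\mathbb E[\mathcal R_k|\mathcal F_{t_{k-1}}]|^2].
\end{equation*}
The proof then finishes with $\mathcal E_0=0$, the lower bound $\mathcal E_k\geq(1-2\theta L_1(1+\hat L^2)\Delta)|e_k|^2\geq\frac12|e_k|^2$ from \eqref{lem:fg:rs111} and $\Delta\leq\bar\Delta$, and discrete Gronwall.
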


\begin{proof} 
Set $u_k:=Px_k$ for $k=0,1,\cdots,K$. Since $P=A_{t_k}^-A_{t_k}$ and $A_{t_k}^-\big(R+A_{t_k}A_{t_{k+1}}^-\big)=A_{t_{k+1}}^-$, applying $A_{t_k}^-$ to \eqref{eq:ABEM} gives
      \begin{align}\label{eq:projected-stm-step}
            u_{k+1}
            =&~A_{t_k}^- A_{t_k} x_{k+1}\notag
            \\
            =&~
            A_{t_k}^-\Big(\theta \big(R+A_{t_k}A_{t_{k+1}}^-\big)
            F(t_{k+1},x_{k+1})\Delta 
           +A_{t_k}x_k 
           +
            (1-\theta) F(t_{k},x_{k})\Delta \notag
            \\&~
           +G(t_k,x_k) \Delta W_k
            +\sum_{j_1,j_2=1}^m \widetilde{\mathcal{L}}^{j_1} G_{j_2}(t_k, x_k)  
            I_{j_1,j_2}^{t_k,t_{k+1}}\Big) \notag
            \\
            =&~
            \theta A_{t_{k+1}}^-F(t_{k+1},x_{k+1}) \Delta 
            +
            P x_k+ (1-\theta) A_{t_{k}}^-
            F(t_{k},x_{k}) \Delta  \notag
            \\
            &~+
            A_{t_{k}}^-G(t_k,x_k) 
            \Delta W_k
            +\sum_{j_1,j_2=1}^m A_{t_{k}}^-
            \widetilde{\mathcal{L}}^{j_1} G_{j_2}(t_k, x_k)  
            I_{j_1,j_2}^{t_k,t_{k+1}}.
      \end{align}
      By Lemma \ref{lem:def}, $x_k\in\mathcal M_{t_k}$ for $k=0,1,\cdots,K$. Hence \eqref{def:Ff} and \eqref{eq:projected-stm-step} yield
      \begin{align}\label{eq:reduced-stm-step}
            u_{k+1} 
            =&~
            u_{k} +\theta f(t_{k+1},u_{k+1}) \Delta 
            +
            (1-\theta)f(t_{k},u_{k}) \Delta \notag
            \\&~+
            g(t_{k},u_{k}) \Delta W_k+
            \sum_{j_1,j_2=1}^m A_{t_{k}}^-
            \widetilde{\mathcal{L}}^{j_1} G_{j_2}(t_k, x_k)  
            I_{j_1,j_2}^{t_k,t_{k+1}}.
      \end{align}
      Letting $e_{k} :=U_{t_k}-u_{k}=PX_{t_k}-Px_{k}=P\big(X_{t_k}-x_{k}\big)$ and using \eqref{eq:inherent_U} and \eqref{eq:reduced-stm-step} yield
      \begin{align*}
            &~e_k - \theta \left( f(t_k, U_{t_k}) - f(t_k, u_k) \right) \Delta
            \\=&~ e_{k-1}+(1 - \theta) \left( f(t_{k-1}, U_{t_{k-1}}) - f(t_{k-1}, u_{k-1}) \right) \Delta 
            + \left( g(t_{k-1}, U_{t_{k-1}}) - g(t_{k-1}, u_{k-1}) \right) \Delta W_{k-1} 
            \\&~+ 
            \sum_{j_1,j_2=1}^m  \big(A_{t_{k-1}}^-\widetilde{\mathcal{L}}^{j_1} G_{j_2}(t_{k-1}, X_{t_{k-1}})-
            A_{t_{k-1}}^-\widetilde{\mathcal{L}}^{j_1} G_{j_2}(t_{k-1}, x_{k-1}) \big)I_{j_1,j_2}^{t_{k-1},t_k}
            +\mathcal{R}_k.
      \end{align*}
      All coefficient differences in the preceding recursion are $\mathcal F_{t_{k-1}}$-measurable and thus can be taken outside conditional expectations. Moreover, the Brownian increments over $[t_{k-1},t_k]$ are independent of $\mathcal F_{t_{k-1}}$. Hence, for $i,j,\ell,r\in\{1,\cdots,m\}$, the following conditional moment identities hold:
      \begin{gather}\label{eq:conditional-moment-formulas}
            \mathbb E\left[
            \Delta W_{k-1}^{i}\mid\mathcal F_{t_{k-1}}
            \right]=0,
            \quad
            \mathbb E\left[
            \Delta W_{k-1}^{i}\Delta W_{k-1}^{j}
            \mid\mathcal F_{t_{k-1}}
            \right]
            =\delta_{ij}\Delta,
            \quad
            \mathbb E\big[
            I_{ij}^{t_{k-1},t_k}\mid\mathcal F_{t_{k-1}}
            \big]=0 \notag
            \\
            \mathbb E\big[
            \Delta W_{k-1}^{i}I_{j\ell}^{t_{k-1},t_k}
            \mid\mathcal F_{t_{k-1}}
            \big]=0,
            \quad
            \mathbb E\big[
            I_{ij}^{t_{k-1},t_k}I_{\ell r}^{t_{k-1},t_k}
            \mid\mathcal F_{t_{k-1}}
            \big]=
            \frac{\Delta^2}{2}\delta_{i\ell}\delta_{jr}.
      \end{gather}
      Here $\delta_{ij}$ denotes the Kronecker delta. Recalling that
      $I_{ij}^{t_{k-1},t_k} = \int_{t_{k-1}}^{t_k} \bigl(W_s^i-W_{t_{k-1}}^i\bigr)\,\mathrm dW_s^j$, the identities in \eqref{eq:conditional-moment-formulas} follow from the martingale property of It\^o integrals and the conditional cross-It\^{o} isometry. Using these identities, we obtain
      \begin{align*}
            &\mathbb E\bigg[\bigg\langle
            e_{k-1}+(1-\theta)
            \big(f(t_{k-1},U_{t_{k-1}})-f(t_{k-1},u_{k-1})\big)\Delta,
            \\
            &\hspace{45mm}
            \big(g(t_{k-1},U_{t_{k-1}})-g(t_{k-1},u_{k-1})\big)
            \Delta W_{k-1}\bigg\rangle\bigg]=0,
            \\
            &\mathbb E\bigg[\bigg\langle
            e_{k-1}+(1-\theta)
            \big(f(t_{k-1},U_{t_{k-1}})-f(t_{k-1},u_{k-1})\big)\Delta,
            \\
            &\hspace{15mm}
            \sum_{j_1,j_2=1}^m
            \big(A_{t_{k-1}}^-\widetilde{\mathcal L}^{j_1}G_{j_2}
            (t_{k-1},X_{t_{k-1}})
            -A_{t_{k-1}}^-\widetilde{\mathcal L}^{j_1}G_{j_2}
            (t_{k-1},x_{k-1})\big)
            I_{j_1,j_2}^{t_{k-1},t_k}
            \bigg\rangle\bigg]=0,
            \\
            &\mathbb E\bigg[\bigg\langle
            \big(g(t_{k-1},U_{t_{k-1}})-g(t_{k-1},u_{k-1})\big)
            \Delta W_{k-1},
            \\
            &\hspace{15mm}
            \sum_{j_1,j_2=1}^m
            \big(A_{t_{k-1}}^-\widetilde{\mathcal L}^{j_1}G_{j_2}
            (t_{k-1},X_{t_{k-1}})
            -A_{t_{k-1}}^-\widetilde{\mathcal L}^{j_1}G_{j_2}
            (t_{k-1},x_{k-1})\big)
            I_{j_1,j_2}^{t_{k-1},t_k}
            \bigg\rangle\bigg]=0.
      \end{align*}
      Moreover,
      \begin{align*}
            &~\mathbb E\bigg[
            \bigg|
            \sum_{j_1,j_2=1}^m
            \big(A_{t_{k-1}}^-\widetilde{\mathcal L}^{j_1}G_{j_2}
            (t_{k-1},X_{t_{k-1}})
            -A_{t_{k-1}}^-\widetilde{\mathcal L}^{j_1}G_{j_2}
            (t_{k-1},x_{k-1})\big)
            I_{j_1,j_2}^{t_{k-1},t_k}
            \bigg|^2\bigg]
            \\=&~
            \frac{\Delta^2}{2}
            \sum_{j_1,j_2=1}^m
            \mathbb E\bigg[
            \big|A_{t_{k-1}}^-\widetilde{\mathcal L}^{j_1}G_{j_2}
            (t_{k-1},X_{t_{k-1}})
            -A_{t_{k-1}}^-\widetilde{\mathcal L}^{j_1}G_{j_2}
            (t_{k-1},x_{k-1})\big|^2\bigg].
      \end{align*}
      The mixed terms involving $\mathcal R_k$ are retained and estimated below. Consequently,
      \begin{align}\label{FG-absG}
            &~\mathbb{E} \left[ |e_k - \theta (f(t_k, U_{t_k}) - f(t_k, u_k)) \Delta|^2 \right] \notag \\
            =&~ \mathbb{E} \left[ |e_{k-1}+(1 - \theta) (f(t_{k-1}, U_{t_{k-1}}) - f(t_{k-1}, u_{k-1})) \Delta|^2 \right] \notag\\
            &~+ \Delta \mathbb{E} \left[ |g(t_{k-1}, U_{t_{k-1}}) - g(t_{k-1}, u_{k-1})|^2 \right]+\mathbb{E} \left[ |\mathcal{R}_k|^2 \right] \notag\\
            &~+\frac{\Delta^2}{2} \sum_{j_1, j_2=1}^m \mathbb{E} [ |A_{t_{k-1}}^-\widetilde{\mathcal{L}}^{j_1} G_{j_2}(t_{k-1}, X_{t_{k-1}})-A_{t_{k-1}}^-\widetilde{\mathcal{L}}^{j_1} G_{j_2}(t_{k-1}, x_{k-1}) |^2 ] \notag\\
            &~+ 2\mathbb{E} \left[ \langle e_{k-1}+(1 - \theta) (f(t_{k-1}, U_{t_{k-1}}) - f(t_{k-1}, u_{k-1})) \Delta, \mathcal{R}_k \rangle \right]  \notag\\
            &~+ 2\mathbb{E} \left[ \langle (g(t_{k-1}, U_{t_{k-1}}) - g(t_{k-1}, u_{k-1})) \Delta W_{k-1}, \mathcal{R}_k \rangle \right]  \notag\\
            &~+2\mathbb{E} \left[ \left\langle \sum_{j_1,j_2=1}^m  \big(A_{t_{k-1}}^-\widetilde{\mathcal{L}}^{j_1} G_{j_2}(t_{k-1}, X_{t_{k-1}})-A_{t_{k-1}}^-\widetilde{\mathcal{L}}^{j_1} G_{j_2}(t_{k-1}, x_{k-1}) \big)I_{j_1,j_2}^{t_{k-1},t_k}, \mathcal{R}_k \right\rangle \right].
      \end{align}
      Rearranging \eqref{FG-absG}, we obtain
      \begin{align}\label{FG-absH}
			 &~\mathbb{E} \left[
            \left| e_k - \theta \left( f(t_k, U_{t_k})
            - f(t_k, u_k) \right) \Delta \right|^2
            \right]
            \notag\\
            =&~ \mathbb{E} \left[ |e_{k-1} - \theta (f(t_{k-1}, U_{t_{k-1}}) 
            - f(t_{k-1}, u_{k-1})) \Delta|^2 \right] 
			+ 2\Delta \mathbb{E} \left[ \langle e_{k-1}, f(t_{k-1}, U_{t_{k-1}}) 
            - f(t_{k-1}, u_{k-1}) \rangle \right] \notag\\
			&~+ (1 - 2\theta) \Delta^2 \mathbb{E} \left[ |f(t_{k-1}, U_{t_{k-1}}) 
            - f(t_{k-1}, u_{k-1})|^2 \right] 
			+ \Delta \mathbb{E} \left[ |g(t_{k-1}, U_{t_{k-1}}) 
            - g(t_{k-1}, u_{k-1})|^2 \right] \notag\\
			&~+\mathbb{E} \left[ |\mathcal{R}_k|^2 \right]
            +\frac{\Delta^2}{2} \sum_{j_1, j_2=1}^m \mathbb{E} [ 
            |A_{t_{k-1}}^-\widetilde{\mathcal{L}}^{j_1} G_{j_2}(t_{k-1}, X_{t_{k-1}})
            - A_{t_{k-1}}^-\widetilde{\mathcal{L}}^{j_1} G_{j_2}(t_{k-1}, x_{k-1}) |^2 ] \notag\\
			&~+ 2\mathbb{E} \left[ \langle e_{k-1}+(1 - \theta) (f(t_{k-1}, U_{t_{k-1}}) 
            - f(t_{k-1}, u_{k-1})) \Delta, \mathcal{R}_k \rangle \right] \notag\\
			&~+ 2\mathbb{E} \left[ \langle (g(t_{k-1}, U_{t_{k-1}}) 
            - g(t_{k-1}, u_{k-1})) \Delta W_{k-1}, \mathcal{R}_k \rangle \right]\notag\\
			&~ +2\mathbb{E} \left[ \left\langle \sum_{j_1,j_2=1}^m  
            \big(A_{t_{k-1}}^-\widetilde{\mathcal{L}}^{j_1} G_{j_2}(t_{k-1},
            X_{t_{k-1}})-A_{t_{k-1}}^-\widetilde{\mathcal{L}}^{j_1} G_{j_2}(t_{k-1}, 
            x_{k-1}) \big)I_{j_1,j_2}^{t_{k-1},t_k}, \mathcal{R}_k \right\rangle \right] .
      \end{align}
      Since $e_{k-1}$, $f(t_{k-1},U_{t_{k-1}})$, and $f(t_{k-1},u_{k-1})$ are $\mathcal F_{t_{k-1}}$-measurable, Young's inequality and $\frac{1-\theta}{\theta}\leq1$ give
      \begin{align}\label{FG-absI}
            &2\mathbb{E}\left[ \left\langle e_{k-1}+(1 - \theta)(f(t_{k-1}, U_{t_{k-1}}) - f(t_{k-1}, u_{k-1}))\Delta, \mathcal{R}_k \right\rangle \right] \notag\\
            =&~ 2\mathbb{E}\left[ \left\langle e_{k-1}+(1 - \theta)(f(t_{k-1}, U_{t_{k-1}}) - f(t_{k-1}, u_{k-1}))\Delta, \mathbb{E}\left[\mathcal{R}_k | \mathcal{F}_{t_{k-1}}\right] \right\rangle \right] \notag\\
            =&~ \frac{2\theta - 2}{\theta} \mathbb{E}\left[ \left\langle e_{k-1} - \theta(f(t_{k-1}, U_{t_{k-1}}) - f(t_{k-1}, u_{k-1}))\Delta, \mathbb{E}\left[\mathcal{R}_k | \mathcal{F}_{t_{k-1}}\right] \right\rangle \right] \notag\\
            &~+ \frac{2}{\theta} \mathbb{E}\left[ \left\langle e_{k-1}, \mathbb{E}\left[\mathcal{R}_k | \mathcal{F}_{t_{k-1}}\right] \right\rangle \right] \notag\\
            \leq&~ \frac{1 - \theta}{\theta} \left( \Delta \mathbb{E}[|e_{k-1} - \theta(f(t_{k-1}, U_{t_{k-1}}) - f(t_{k-1}, u_{k-1}))\Delta|^2] \right)
            \notag\\
            &~+ \Delta^{-1} \mathbb{E}[|\mathbb{E}\left[\mathcal{R}_k | \mathcal{F}_{t_{k-1}}\right]|^2]+\Delta \mathbb{E}[|e_{k-1}|^2]  
           +\frac{1}{\theta^2} \Delta^{-1} \mathbb{E}[|\mathbb{E}\left[\mathcal{R}_k | \mathcal{F}_{t_{k-1}}\right]|^2] \notag\\
            \leq&~ \Delta \mathbb{E}[|e_{k-1} - \theta (f(t_{k-1}, U_{t_{k-1}}) - f(t_{k-1}, u_{k-1}))\Delta|^2]
            \notag\\
            &~+ \Delta \mathbb{E}[|e_{k-1}|^2]+\frac{\theta^2+1}{\theta^2} \Delta^{-1} \mathbb{E}[|\mathbb{E}\left[\mathcal{R}_k | \mathcal{F}_{t_{k-1}}\right]|^2].
      \end{align}
      Applying Young's inequality again, we have
      \begin{align}\label{FG-absJ1}
            &~2\mathbb{E}[\langle (g(t_{k-1}, U_{t_{k-1}}) - g(t_{k-1}, u_{k-1}))\Delta W_{k-1}, \mathcal{R}_k \rangle]  \notag\\
            \leq&~ (p_1 - 2)\Delta \mathbb{E}[|g(t_{k-1}, U_{t_{k-1}}) - g(t_{k-1}, u_{k-1})|^2]+\frac{1}{p_1 - 2}\mathbb{E}[|\mathcal{R}_k|^2],
      \end{align}
      and
      \begin{align}\label{FG-absJ2}
            &~2\mathbb{E} \left[ \big\langle \sum_{j_1,j_2=1}^m  \big(A_{t_{k-1}}^-\widetilde{\mathcal{L}}^{j_1} G_{j_2}(t_{k-1}, X_{t_{k-1}})-A_{t_{k-1}}^-\widetilde{\mathcal{L}}^{j_1} G_{j_2}(t_{k-1}, x_{k-1}) \big)I_{j_1,j_2}^{t_{k-1},t_k}, \mathcal{R}_k \big\rangle \right]  \notag\\
            \leq&~\frac{(q-1)\Delta^2}{2}\sum_{j_1, j_2=1}^m \mathbb{E} \big[ |A_{t_{k-1}}^-\widetilde{\mathcal{L}}^{j_1} G_{j_2}(t_{k-1}, X_{t_{k-1}})-A_{t_{k-1}}^-\widetilde{\mathcal{L}}^{j_1} G_{j_2}(t_{k-1}, x_{k-1}) |^2\big] +\frac{1}{q - 1}\mathbb{E}[|\mathcal{R}_k|^2].
      \end{align}
      Inserting \eqref{FG-absI}, \eqref{FG-absJ1} and \eqref{FG-absJ2} into \eqref{FG-absH}, using Assumption \ref{asm:FG},
      and noting that $\frac{1}{2} \leq \theta \leq 1$, we obtain
      \begin{align}\label{FG-absK}
            &\mathbb{E} \left[ \left| e_k - \theta \left( f(t_k, U_{t_k}) 
            - f(t_k, u_k) \right) \Delta \right|^2 \right] \notag\\
            \leq&~ \mathbb{E} \left[ \left| e_{k-1} 
            - \theta \left( f(t_{k-1}, U_{t_{k-1}}) 
            - f(t_{k-1}, u_{k-1}) \right) \Delta \right|^2 \right] \notag\\
            &~+ L_1 \Delta \mathbb{E} \left[ |X_{t_{k-1}}-x_{k-1}|^2 \right] 
            - (p_1-1) \Delta \mathbb{E} \left[ |g(t_{k-1}, U_{t_{k-1}}) 
            - g(t_{k-1}, u_{k-1})|^2 \right]  \notag\\
            &~-\frac{q\Delta_0^2}{2}\sum_{j_1, j_2=1}^m \mathbb{E} 
            [ |A_{t_{k-1}}^-\widetilde{\mathcal{L}}^{j_1} G_{j_2}(t_{k-1}, 
            X_{t_{k-1}})-A_{t_{k-1}}^-\widetilde{\mathcal{L}}^{j_1} 
            G_{j_2}(t_{k-1}, x_{k-1}) |^2]  \notag\\
            &~+ \Delta \mathbb{E} \left[ |g(t_{k-1}, U_{t_{k-1}}) 
            - g(t_{k-1}, u_{k-1})|^2 \right]
            + \mathbb{E} \left[ |\mathcal{R}_k|^2 \right]
            \notag\\
            &~+\frac{\Delta^2}{2} \sum_{j_1, j_2=1}^m \mathbb{E} 
            [ |A_{t_{k-1}}^-\widetilde{\mathcal{L}}^{j_1} G_{j_2}(t_{k-1}, 
            X_{t_{k-1}})-A_{t_{k-1}}^-\widetilde{\mathcal{L}}^{j_1} G_{j_2}(t_{k-1}, x_{k-1}) |^2 ]
            \notag\\
            &~+ \Delta \mathbb{E} \left[ \left| e_{k-1} 
            - \theta \left( f(t_{k-1}, U_{t_{k-1}}) 
            - f(t_{k-1}, u_{k-1}) \right) \Delta \right|^2 \right]
            + \Delta \mathbb{E} \left[ |e_{k-1}|^2 \right]
            \notag\\
            &~+ \frac{\theta^2+1}{\theta^2} \Delta^{-1} 
            \mathbb{E} \left[ \left| \mathbb{E}\left[\mathcal{R}_k 
            | \mathcal{F}_{t_{k-1}}\right] \right|^2 \right] 
            +(p_1 - 2) \Delta \mathbb{E} \left[ |g(t_{k-1}, 
            U_{t_{k-1}}) - g(t_{k-1}, u_{k-1})|^2 \right] 
            \notag\\
            &~+ \frac{1}{p_1 - 2} \mathbb{E} \left[ |\mathcal{R}_k|^2 \right]
            +\frac{(q-1)\Delta^2}{2}\sum_{j_1, j_2=1}^m 
            \mathbb{E} [ |A_{t_{k-1}}^-\widetilde{\mathcal{L}}^{j_1} G_{j_2}(t_{k-1}, X_{t_{k-1}}) \notag
            \\&~- A_{t_{k-1}}^-\widetilde{\mathcal{L}}^{j_1} G_{j_2}(t_{k-1}, x_{k-1}) |^2] 
            +\frac{1}{q - 1}\mathbb{E}[|\mathcal{R}_k|^2]. 
      \end{align}
      By Lemma \ref{lem:def}, $x_k\in\mathcal M_{t_k}$. Hence \eqref{def:Qx=v} gives $x_k = Px_k + \hat{V}(t_k,Px_k) = u_k+\hat{V}(t_k,u_k)$. Combining this representation with \eqref{lem-v-rs0}, we obtain, for any $k=0,1,\cdots,K$,
      \begin{align}
            \mathbb{E}\left[|X_{t_k}-x_k|^2\right]
            =&~
            \mathbb{E}\left[
            \left|
            \big(PX_{t_k}+\hat{V}(t_k,PX_{t_k})\big)
            -
            \big(Px_k+\hat{V}(t_k,Px_k)\big)
            \right|^2
            \right] \notag\\
            \leq&~
            (1+\hat L)^2
            \mathbb{E}\left[|PX_{t_k}-Px_k|^2\right]
            =
            (1+\hat L)^2
            \mathbb{E}\left[|e_k|^2\right].
            \label{FG-absAA}
      \end{align}
      By collecting like terms in \eqref{FG-absK} and combining \eqref{FG-absAA} and $0< \Delta \leq \bar{\Delta} \leq \Delta_0$, we have
      \begin{align*}
            &~\mathbb{E} \left[ \left| e_k - \theta \left( f(t_k, U_{t_k}) 
            - f(t_k, u_k) \right) \Delta \right|^2 \right] \notag\\
            \leq&~ (1+\Delta) \mathbb{E} \left[ \left| e_{k-1} 
            - \theta \left( f(t_{k-1}, U_{t_{k-1}}) 
            - f(t_{k-1}, u_{k-1}) \right) \Delta \right|^2 \right] 
            + \bigl(L_1(1+\hat{L})^2 +1\bigr) \Delta \mathbb{E} 
            \left[ |e_{k-1}|^2 \right]
            \\&~+
            \left(\frac{p_1 -1}{p_1 - 2}+\frac{1}{q - 1}\right) 
            \mathbb{E} \left[ |\mathcal{R}_k|^2 \right] 
            + 
            \frac{\theta^2+1}{\theta^2} \Delta^{-1} 
            \mathbb{E} \left[ \left| \mathbb{E}\left[\mathcal{R}_k 
            | \mathcal{F}_{t_{k-1}}\right] \right|^2 \right].
      \end{align*}
      
      Since $e_0=0$, $f(t_0,U_{t_0})-f(t_0,u_0)=0$, and $(1+\Delta)^k\leq e^T$ for $k=0,1,\cdots,K$, iteration gives
      \begin{align*}
            &\mathbb{E}[|e_k - \theta(f(t_k, U_{t_k}) - f(t_k, u_k))\Delta|^2]\\
            \leq&~ (1+\Delta)^k \mathbb{E} \left[ |e_0 - \theta(f(t_0, U_{t_0}) - f(t_0, u_0))\Delta|^2 \right]
           +(L_1(1+\hat{L})^2 +1) \Delta \sum_{i=0}^{k-1} (1+\Delta)^{k-1-i} \mathbb{E} \left[ |e_i|^2 \right]\\
            &~+\left( \frac{p_1 -1}{p_1 - 2}+ \frac{1}{q - 1}\right) \sum_{i=1}^k (1+\Delta)^{k-i} \mathbb{E} \left[ |\mathcal{R}_i|^2 \right]+\frac{\theta^2+1}{\theta^2} \Delta^{-1} \sum_{i=1}^k (1+\Delta)^{k-i} \mathbb{E} \left[ |\mathbb{E}\left[\mathcal{R}_i | \mathcal{F}_{t_{i-1}}\right]|^2 \right]\\
            \leq&~ (L_1(1+\hat{L})^2 +1) e^T \Delta \sum_{i=0}^{k-1} \mathbb{E} \left[ |e_i|^2 \right]+\left( \frac{p_1 -1}{p_1 - 2}+ \frac{1}{q - 1}\right)e^T \sum_{i=1}^k \mathbb{E} \left[ |\mathcal{R}_i|^2 \right]\\
            &~+ \frac{\theta^2+1}{\theta^2} e^T \Delta^{-1} \sum_{i=1}^k \mathbb{E} \left[ |\mathbb{E}\left[\mathcal{R}_i | \mathcal{F}_{t_{i-1}}\right]|^2 \right].
      \end{align*} 
      Lemma \ref{lem-v-3} gives
      \begin{align*}
            |e_k-\theta(f(t_k,U_{t_k})-f(t_k,u_k))\Delta|^2
            \geq&~
            |e_k|^2-2\theta\Delta\langle e_k,f(t_k,U_{t_k})-f(t_k,u_k)\rangle
            \\\geq&~
            (1-2L_1(1+\hat L^2)\theta\Delta)|e_k|^2,
      \end{align*}
      and consequently
      \begin{align*}
            (1 - 2L_{1}\big(1+\hat{L}^2\big)\theta \Delta) \mathbb{E}[|e_k|^2]
            \leq&~ (L_1(1+\hat{L})^2 +1) e^T \Delta \sum_{i=0}^{k-1} \mathbb{E}[|e_i|^2] 
            +
            \left( \frac{p_1-1}{p_1 - 2} + \frac{1}{q - 1}\right)
            \\&~\times 
            e^T \sum_{i=1}^K \mathbb{E}[|\mathcal{R}_i|^2] 
            + 
            \frac{\theta^2+1}{\theta^2} e^T \Delta^{-1} \sum_{i=1}^K 
            \mathbb{E}[|\mathbb{E}\left[\mathcal{R}_i | \mathcal{F}_{t_{i-1}}\right]|^2].
      \end{align*}
      Note that  $\Delta \leq \bar{\Delta} \leq \frac{1}{4\theta L_1(1+\hat L^2)}$, we have
      $1-2L_1(1+\hat L^2)\theta\Delta\geq\frac12$. The discrete Gronwall inequality therefore yields a constant $C>0$, independent of $\Delta$, such that for any $k=0,1,\cdots,K$,
      \begin{align*}
            \mathbb{E}[|e_k|^2] \leq C \left( \sum_{i=1}^K \mathbb{E}[|\mathcal{R}_i|^2]+\Delta^{-1} \sum_{i=1}^k \mathbb{E}[|\mathbb{E}\left[\mathcal{R}_i | \mathcal{F}_{t_{i-1}}\right]|^2]\right),
      \end{align*}
      which in combination with \eqref{FG-absAA} gives the desired result.
\end{proof}

\subsection{Global convergence rate}
The projection-invariant regularity estimates established in Lemmas \ref{lem:reduced-growth-estimates} and \ref{lem:reduced-derivative-difference-estimates} now provide the local bounds needed for the convergence theorem.

\begin{theorem}\label{thm:convAt}
Suppose that Assumptions \ref{asm:At}, \ref{ass:index1}, \ref{asm:FG}, and \ref{ass:time-regularity} hold. Then, for every $0<\Delta\leq\bar\Delta$, there exists a constant $C>0$, independent of $\Delta$, such that
      \begin{equation*}
      \max_{1\leq k\leq K} \mathbb{E}\left[|X_{t_k}-x_k|^2\right] \leq C\Delta^2 .
      \end{equation*}
\end{theorem}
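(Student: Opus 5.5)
By Lemma \ref{lem:error-reduction}, it suffices to prove the two local estimates
\begin{equation*}
\mathbb E\big[|\mathcal R_k|^2\big]\leq C\Delta^3,\qquad \mathbb E\big[|\mathbb E[\mathcal R_k\mid\mathcal F_{t_{k-1}}]|^2\big]\leq C\Delta^4,\qquad k=1,\dots,K .
\end{equation*}
Summing over the $K=T/\Delta$ indices then gives $C\Delta^2+\Delta^{-1}C\Delta^3=C\Delta^2$. I would first rewrite the Milstein term in $\mathcal R_k$ using \eqref{eq:reduced-milstein-coefficient} together with $X_{t_{k-1}}=\Psi_{t_{k-1}}(U_{t_{k-1}})$. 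This gives $A_{t_{k-1}}^-\widetilde{\mathcal L}^{j_1}G_{j_2}(t_{k-1},X_{t_{k-1}})=\partial_ug_{j_2}(t_{k-1},U_{t_{k-1}})g_{j_1}(t_{k-1},U_{t_{k-1}})$, so $\mathcal R_k$ becomes the standard Milstein residual of the reduced SDE \eqref{eq:inherent_U}. Write $\mathcal R_k=\mathcal R_k^{f,\theta}+\mathcal R_k^{f,1-\theta}+\mathcal R_k^{g}$, where $\mathcal R_k^g$ collects the stochastic integral and the Milstein correction.

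For the drift parts, apply It\^o's formula to $f(s,U_s)$. This is only formally justified under $C^{1,2}$ regularity. If a second spatial derivative of $f$ is unavailable, I would instead use a first-order Taylor expansion with integral remainder:
\begin{equation*}
f(s,U_s)-f(t_{k-1},U_{t_{k-1}})=\partial_tf\,(s-t_{k-1})+\partial_uf(t_{k-1},U_{t_{k-1}})(U_s-U_{t_{k-1}})+\text{remainder}.
\end{equation*}
The remainder is controlled by \eqref{eq:red-diff-fu} and \eqref{eq:red-diff-ft}. Then substitute $U_s-U_{t_{k-1}}=\int f\,\diff r+\int g\,\diff W$.

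\begin{itemize}
\item The unconditional second moment of $\int_{t_{k-1}}^{t_k}(f(s,U_s)-f(t_{k-1},U_{t_{k-1}}))\diff s$ is at most $\Delta\int\mathbb E|\cdot|^2\diff s\leq C\Delta^3$, by Lemma \ref{lem:reduced-ms-regularity} and Cauchy--Schwarz. The $f(t_k,U_{t_k})$ term is handled the same way.
\item For the conditional expectation, the martingale part $\partial_uf\int g\,\diff W$ has zero conditional mean. What remains are terms of size $\Delta$ inside an integral of length $\Delta$, plus a remainder. The remainder is bounded by $(1+|X|)^{2\gamma}|X_s-X_{t_{k-1}}|^2$-type quantities, which are $O(\Delta)$ in $L^1$ by Corollary \ref{cor:mixed-moment}; this needs $L^2$, and hence the exponent bookkeeping described below.
\item The $\theta$-term centred at $t_k$ is rewritten as its difference from $f(t_{k-1},U_{t_{k-1}})$, minus $\int_{t_{k-1}}^{t_k}\big(f(t_k,U_{t_k})-f(t_{k-1},U_{t_{k-1}})\big)\diff s$, and is handled by the same expansion.
\end{itemize}
Altogether these give $\mathbb E|\mathbb E[\cdot\mid\mathcal F_{t_{k-1}}]|^2\leq C\Delta^4$.

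For the diffusion part, $\mathbb E[\mathcal R_k^g\mid\mathcal F_{t_{k-1}}]=0$, since both the It\^o integral and $I_{j_1,j_2}$ have zero conditional mean. It therefore only remains to show $\mathbb E|\mathcal R_k^g|^2\leq C\Delta^3$. By It\^o's isometry this reduces to bounding, for $s\in[t_{k-1},t_k]$,
\begin{equation*}
\mathbb E\Big|g_{j_2}(s,U_s)-g_{j_2}(t_{k-1},U_{t_{k-1}})-\sum_{j_1}\partial_ug_{j_2}(t_{k-1},U_{t_{k-1}})g_{j_1}(t_{k-1},U_{t_{k-1}})(W^{j_1}_s-W^{j_1}_{t_{k-1}})\Big|^2\leq C\Delta^2 .
\end{equation*}
This is done by Taylor-expanding $g_{j_2}$ to first order in $(t,u)$. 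The remainder, involving $\partial_ug_{j_2}$ evaluated at intermediate points minus its value at $(t_{k-1},U_{t_{k-1}})$, is controlled by \eqref{eq:red-diff-gu}, which multiplies $|U_s-U_{t_{k-1}}|^2$ by $|X_\xi-X_{t_{k-1}}|^2$; this is $O(\Delta^2)$. The time-derivative term is controlled via \eqref{eq:red-growth-gu}. The linear term $\partial_ug_{j_2}(U_s-U_{t_{k-1}})$ minus the Milstein correction equals $\partial_ug_{j_2}\big(\int f\,\diff r+\int(g(r,U_r)-g(t_{k-1},U_{t_{k-1}}))\diff W_r\big)$, which is $O(\Delta^2)$ in mean square by Lemma \ref{lem:reduced-ms-regularity}.

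The main obstacle I expect is the moment bookkeeping rather than the structure. Each remainder is a product of a polynomial weight such as $(1+|X|)^{3\gamma-1}$ or $(1+|X|)^{4\gamma}$ with $|U_s-U_{t_{k-1}}|^4$ or $|X_\xi-X_{t_{k-1}}|^2|U_s-U_{t_{k-1}}|^2$. These must be bounded by Hölder's inequality, Lemma \ref{lem:bound}, and Lemma \ref{lem:UX}, with all exponents kept below $p_1$ and $\nu<p_1/\gamma$. I would check each term against the restriction $\gamma<(p_1+2)/10$, i.e.\ $10\gamma-2<p_1$, which is presumably calibrated exactly for the worst term. That worst term is likely the $\partial_tg$ or $\partial_tf$ remainder with weight $(1+|x|)^{7\gamma-3}$ or $(1+|x|)^{8\gamma-2}$ times squared increments, and I would handle it first. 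The intermediate points $\xi$ in the Taylor remainders are handled by bounding $|\Psi_\xi|$ through \eqref{lem-v-rs1} in terms of $|U_s|+|U_{t_{k-1}}|$.
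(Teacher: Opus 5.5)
Your proposal is correct and follows the paper's proof essentially step for step. The shared steps are the reduction via Lemma \ref{lem:error-reduction}, the identification \eqref{eq:reduced-milstein-coefficient}, first-order Taylor expansions whose remainders are controlled by Lemma \ref{lem:reduced-derivative-difference-estimates}, It\^o's isometry for the diffusion--Milstein residual, and the zero conditional mean of the martingale parts. Your re-centring of the $\theta$-term at $t_{k-1}$ is a harmless variant that avoids the paper's Fubini/tower argument for $\partial_uf(s,U_s)\int_s^{t_k}g\,\diff W$.

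One correction: for $\partial_ug_{j_2}(t_{k-1},U_{t_{k-1}})\int_{t_{k-1}}^s\big(g(r,U_r)-g(t_{k-1},U_{t_{k-1}})\big)\,\diff W_r$, Lemma \ref{lem:reduced-ms-regularity} alone does not suffice, because $\partial_ug$ is unbounded. You need the weighted bound from \eqref{asm:FG-absG} together with Corollary \ref{cor:mixed-moment} with $a=2\gamma$, $b=2$, exactly as the paper does.
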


\begin{proof}
We first decompose the local residual into the drift residual and the diffusion-Milstein residual. Write $\mathcal R_k=\mathcal R_k^f+\mathcal R_k^g$, where
      \begin{align*}
            \mathcal R_k^f
            :=&~
            \int_{t_{k-1}}^{t_k}
            \left(f(s,U_s) - \theta f(t_k,U_{t_k})
            -
            (1-\theta)f(t_{k-1},U_{t_{k-1}})\right)ds,
            \\
            \mathcal R_k^g
            :=&~
            \int_{t_{k-1}}^{t_k}
            \left( g(s,U_s)-g(t_{k-1},U_{t_{k-1}}) \right)dW_s
            -
            \sum_{j_1,j_2=1}^{m}
            A_{t_{k-1}}^-\widetilde{\mathcal L}^{j_1}G_{j_2}
            (t_{k-1},X_{t_{k-1}})I_{j_1,j_2}^{t_{k-1},t_k}.
      \end{align*}
      Hence
      \begin{equation}
            \mathbb E\left[|\mathcal R_k|^2\right] 
            \leq 
            C\mathbb E\left[|\mathcal R_k^f|^2\right]
            +
            C\mathbb E\left[|\mathcal R_k^g|^2\right]. \label{eq:Rk-ms-split}
      \end{equation}
      For the drift residual, by Jensen's inequality and \eqref{eq:red-ms-fg},
      \begin{align}
            \mathbb E\left[|\mathcal R_k^f|^2\right]
            \leq&~
            C\Delta\int_{t_{k-1}}^{t_k}
            \mathbb E\left[|f(s,U_s)-f(t_k,U_{t_k})|^2 \right]ds \notag
            +
            C\Delta
            \int_{t_{k-1}}^{t_k}\mathbb E\left[
            |f(s,U_s)-f(t_{k-1},U_{t_{k-1}})|^2 \right]ds \notag\\
            \leq&~
            C\Delta\int_{t_{k-1}}^{t_k}
            \left(|t_k-s|+|s-t_{k-1}|\right)ds
            \leq
            C\Delta^3 .
            \label{eq:Rk-f-est}
      \end{align}
      For a differentiable function $\phi(t,u):[0,T]\times\mathbb R^d\to\mathbb R^d$, we use the expansion
      \begin{align*}
            \phi(t,U_t)-\phi(s,U_s)
            =&~
            \partial_u\phi(s,U_s)(U_t-U_s)
            +
            \partial_t\phi(s,U_s)(t-s)
            +
            R_\phi(s,U_s,t,U_t),
            \quad 0\leq s<t\leq T,
      \end{align*}
where
      \begin{align*}
            R_\phi(s,U_s,t,U_t)
            :=&~
            \int_0^1
            \left(
            \partial_u\phi
            (s+r(t-s),U_s+r(U_t-U_s))
            -
            \partial_u\phi(s,U_s)
            \right)
            (U_t-U_s)\,dr
            \\
            &~+
            \int_0^1
            \left(
            \partial_t\phi
            (s+r(t-s),U_s+r(U_t-U_s))
            -
            \partial_t\phi(s,U_s)
            \right)
            (t-s)\,dr .
      \end{align*}
      Applying the above expansion with $\phi=g_j$, $s=t_{k-1}$ and $t\in[t_{k-1},t_k]$, we have
      \begin{align*}
            &~g_j(t,U_t)-g_j(t_{k-1},U_{t_{k-1}})
            \\=&~
            \partial_ug_j(t_{k-1},U_{t_{k-1}})
            (U_t-U_{t_{k-1}})
            +
            \partial_tg_j(t_{k-1},U_{t_{k-1}})(t-t_{k-1})
            +
            R_{g_j}(t_{k-1},U_{t_{k-1}},t,U_t).
      \end{align*}
      By \eqref{eq:inherent_U},
      \begin{equation*}
            U_t-U_{t_{k-1}} 
            = 
            \int_{t_{k-1}}^t f(\xi,U_\xi)\,d\xi
            +
            \int_{t_{k-1}}^t g(\xi,U_\xi)\,dW_\xi,
      \end{equation*}
      Substituting this formula into the expansion above and using \eqref{eq:reduced-milstein-coefficient}, we obtain \eqref{eq:diffusion-remainder-decomposition} for $t\in[t_{k-1},t_k]$:
      \begin{align}\label{eq:diffusion-remainder-decomposition}
            &~g_j(t,U_t)-g_j(t_{k-1},U_{t_{k-1}})
            -
            \sum_{j_1=1}^m
            \partial_ug_j
            (t_{k-1},U_{t_{k-1}})
            g_{j_1}(t_{k-1},U_{t_{k-1}})
            (W_t^{j_1}-W_{t_{k-1}}^{j_1})
            \notag\\
            =&~
            \partial_ug_j(t_{k-1},U_{t_{k-1}})
            \int_{t_{k-1}}^t f(\xi,U_\xi)\,d\xi
            +
            \partial_ug_j(t_{k-1},U_{t_{k-1}})
            \int_{t_{k-1}}^t
            \big(g(\xi,U_\xi)-g(t_{k-1},U_{t_{k-1}})\big)\,dW_\xi
            \notag\\
            &~+
            \partial_tg_j(t_{k-1},U_{t_{k-1}})
            (t-t_{k-1})
            +
            R_{g_j}(t_{k-1},U_{t_{k-1}},t,U_t).
      \end{align}
      Integrating \eqref{eq:diffusion-remainder-decomposition} with respect to $W^j$ over $[t_{k-1},t_k]$ and summing over $j=1,\cdots,m$ yields the desired decomposition of the diffusion part of $\mathcal R_k$. For the first term, by \eqref{eq:red-growth-gu}, \eqref{eq:red-growth-fg}, Lemma \ref{lem:bound}, H\"older's inequality and the fact $3\gamma-1<10\gamma-2<p_1$,
      \begin{align*}
            &~\mathbb{E}\left[
            \left|
            \partial_ug_j(t_{k-1},U_{t_{k-1}})
            \int_{t_{k-1}}^t f(\xi,U_\xi)\,d\xi
            \right|^2
            \right]
            \\
            \leq&~
            C(t-t_{k-1})
            \int_{t_{k-1}}^t
            \mathbb{E}\left[
            \left|
            \partial_ug_j(t_{k-1},U_{t_{k-1}})
            f(\xi,U_\xi)
            \right|^2
            \right]\,d\xi
                        \\
            \leq&~
            C(t-t_{k-1})
            \int_{t_{k-1}}^t
            \mathbb{E}\left[
            \left(
            1+|U_{t_{k-1}}|+|U_\xi|
            \right)^{3\gamma-1}
            \right]\,d\xi
            \\
            \leq&~
            C(t-t_{k-1})^2 .
      \end{align*}
      For the second term, by the It\^o isometry and \eqref{eq:red-growth-gu}, we have
      \begin{align*}
            &~\mathbb{E}\left[
            \left|
            \partial_ug_j(t_{k-1},U_{t_{k-1}})
            \int_{t_{k-1}}^{t}
            \left(
            g(\xi,U_\xi)-g(t_{k-1},U_{t_{k-1}})
            \right)dW_\xi
            \right|^2
            \right]
            \\
            =&~
            \int_{t_{k-1}}^{t}
            \mathbb{E}\left[
            \left|
            \partial_ug_j(t_{k-1},U_{t_{k-1}})
            \left(
            g(\xi,U_\xi)-g(t_{k-1},U_{t_{k-1}})
            \right)
            \right|^2
            \right]d\xi
            \\
            \leq&~
            C\int_{t_{k-1}}^{t}
            \mathbb{E}\left[
            (1+|X_{t_{k-1}}|)^{\gamma-1}
            \left|
            g(\xi,U_\xi)-g(t_{k-1},U_{t_{k-1}})
            \right|^2
            \right]d\xi .
      \end{align*}
      Furthermore,
      \begin{align*}
            g(\xi,U_\xi)-g(t_{k-1},U_{t_{k-1}})
            =&~(A_\xi^--A_{t_{k-1}}^-)G(\xi,X_\xi)
            +A_{t_{k-1}}^-[G(\xi,X_\xi)-G(t_{k-1},X_{t_{k-1}})].
      \end{align*}
      Since $2\gamma+2\gamma=4\gamma<p_1$, Corollary \ref{cor:mixed-moment}, with $a=2\gamma$ and $b=2$, and Lemmas \ref{lem:prop:A} and \ref{lem:bound}, together with \eqref{asm:FG-absG}, give
      \begin{align*}
            &~\mathbb{E}\left[
            (1+|X_{t_{k-1}}|)^{\gamma-1}
            \left|
            g(\xi,U_\xi)-g(t_{k-1},U_{t_{k-1}})
            \right|^2
            \right]
            \\
            \leq&~
            C\mathbb{E}\left[
            (1+|X_\xi|+|X_{t_{k-1}}|)^{2\gamma}
            |X_\xi-X_{t_{k-1}}|^2
            \right]
            +
            C|\xi-t_{k-1}|^2
            \mathbb{E}\left[
            (1+|X_\xi|+|X_{t_{k-1}}|)^{2\gamma}
            \right]
            \\
            \leq&~
            C|\xi-t_{k-1}|.
      \end{align*}
      Therefore,
      \begin{align*}
            &~\mathbb{E}\left[\left|\partial_ug_j(t_{k-1},U_{t_{k-1}})
            \int_{t_{k-1}}^{t}\left(g(\xi,U_\xi)-g(t_{k-1},U_{t_{k-1}})
            \right)dW_\xi\right|^2\right]
            \leq
            C\int_{t_{k-1}}^{t}|\xi-t_{k-1}|\,d\xi
            \leq
            C(t-t_{k-1})^2 .
      \end{align*}
      For the third term, by \eqref{eq:red-growth-gu}, Lemma \ref{lem:bound} and the fact $2\gamma-2<p_1$,
      \begin{align*}
            \mathbb{E}\left[\left|\partial_tg_j(t_{k-1},U_{t_{k-1}})
            (t-t_{k-1})\right|^2\right]
            \leq
            C(t-t_{k-1})^2 \mathbb{E}\left[
            \left( 1+|U_{t_{k-1}}| \right)^{2\gamma-2}\right]
            \leq
            C(t-t_{k-1})^2 .
      \end{align*}
      Note that for any $s,\bar{s}\in[t_{k-1},t_k]$, along the line segment joining $(s,U_s)$ and $(\bar{s},U_{\bar{s}})$, the estimates \eqref{eq:red-diff-gu}--\eqref{eq:red-diff-gt}, together with $X_s=\Psi_{s}(U_s)$, $U_s=PX_s$ and Lemma \ref{lem-v}, yield, uniformly in $r\in[0,1]$,
       \begin{align}
            \big|\Psi_{s+r(\bar{s}-s)}\big(U_s+r(U_{\bar{s}}-U_s)\big)\big|
            =&~
            \big| \big(U_s+r(U_{\bar{s}}-U_s)\big) 
            +
            \hat{V} \left( s+r(\bar{s}-s), \big(U_s+r(U_{\bar{s}}-U_s)\big) \right) \big| \notag 
            \\\leq&~
            C\big(1+\left| P \big(U_s+r(U_{\bar{s}}-U_s)\big) \right|\big)
            \leq
            C\bigl(1+|PX_s|+|PX_{\bar{s}}|\bigr) \notag 
            \\\leq&~
            C\bigl(1+|X_s|+|X_{\bar{s}}|\bigr),
            \label{eq:Psi:1}
      \end{align}
      and
      \begin{align}
            \left|\Psi_{s+r(\bar{s}-s)}
            \big(U_s+r(U_{\bar{s}}-U_s)\big)
            -X_s\right|
            =&~
            \left|\Psi_{s+r(\bar{s}-s)}
            \big(U_s+r(U_{\bar{s}}-U_s)\big)
            - \Psi_s (U_s) \right| \notag \\
            \leq &~
            (1+\hat{L})r|P(U_{\bar{s}}-U_s)|
            +C\bigl(1+|PU_s|+|PU_{\bar{s}}|\bigr)^\gamma |\bar{s}-s| \notag \\
            \leq &~
            C|X_s-X_{\bar{s}}|
            +C\bigl(1+|X_s|+|X_{\bar{s}}|\bigr)^\gamma \Delta.
            \label{eq:Psi:2}
      \end{align}
      These together with \eqref{eq:red-diff-gu}--\eqref{eq:red-diff-gt} yields
      \begin{align}
            &~ \left| \partial_u g_j
            \big(s+r(\bar{s}-s),U_s+r(U_{\bar{s}}-U_s)\big)
            -
            \partial_u g_j (s,U_s) \right|^2 \notag \\
            \leq &~
             C \big(1+\left|\Psi_{s+r(\bar{s}-s)}\big(U_s+r(U_{\bar{s}}-U_s)\big)\right|+|X_s|\big)^{3\gamma-1}
             \big( \left| \Psi_{s+r(\bar{s}-s)}\big(U_s+r(U_{\bar{s}}-U_s)\big) -X_s \right|^2 +\Delta^2 \big) \notag \\
            \leq &~
            C \big(1+|X_s|+|X_{\bar{s}}| \big)^{3\gamma-1}  
            \big( |X_s-X_{\bar{s}}|^2+\big(1+|X_s|+|X_{\bar{s}}|\big)^{2\gamma} \Delta^2 +\Delta^2 \big)\notag \\
            \leq &~
            C \big(1+|X_s|+|X_{\bar{s}}| \big)^{3\gamma-1} |X_s-X_{\bar{s}}|^2
            +C \Delta^2 \big(1+|X_s|+|X_{\bar{s}}| \big)^{5\gamma-1},
            \label{eq:Psi:gu}
      \end{align}      
      and
            \begin{align}
            &~ \left| \partial_t g_j
            \big(s+r(\bar{s}-s),U_s+r(U_{\bar{s}}-U_s)\big)
            -
            \partial_t g_j (s,U_s)\right|^2 \notag \\
            \leq &~
             C \big(1+\left|\Psi_{s+r(\bar{s}-s)}\big(U_s+r(U_{\bar{s}}-U_s)\big)\right|+|X_s|\big)^{7\gamma-3}
             \big( \left| \Psi_{s+r(\bar{s}-s)}\big(U_s+r(U_{\bar{s}}-U_s)\big) -X_s \right|^2 +\Delta^2 \big) \notag \\
            \leq &~
            C \big(1+|X_s|+|X_{\bar{s}}| \big)^{7\gamma-3}  
            \big( |X_s-X_{\bar{s}}|^2+\bigl(1+|X_s|+|X_{\bar{s}}|\bigr)^{2\gamma} \Delta^2 +\Delta^2 \big)\notag \\
            \leq &~
            C \big(1+|X_s|+|X_{\bar{s}}| \big)^{7\gamma-3} |X_s-X_{\bar{s}}|^2
            +C \Delta^2 \big(1+|X_s|+|X_{\bar{s}}| \big)^{9\gamma-3}.
            \label{eq:Psi:gt}
      \end{align}
      Similarly, using \eqref{eq:red-diff-fu} and \eqref{eq:red-diff-ft} yields
            \begin{align}
            &~ \left| \partial_u f
            \big(s+r(\bar{s}-s),U_s+r(U_{\bar{s}}-U_s)\big)
            -
            \partial_u f (s,U_s) \right|^2 \notag \\
            \leq &~
            C \big(1+|X_s|+|X_{\bar{s}}| \big)^{4\gamma} |X_s-X_{\bar{s}}|^2
            +C \Delta^2 \big(1+|X_s|+|X_{\bar{s}}| \big)^{6\gamma},
            \label{eq:Psi:fu}
      \end{align}      
      and
            \begin{align}
            &~ \left| \partial_t f
            \big(s+r(\bar{s}-s),U_s+r(U_{\bar{s}}-U_s)\big)
            -
            \partial_t f (s,U_s)\right|^2 \notag \\
            \leq &~
            C\big(1+|X_s|+|X_{\bar{s}}| \big)^{8\gamma-2} |X_s-X_{\bar{s}}|^2
            +
            C\Delta^2 \big(1+|X_s|+|X_{\bar{s}}| \big)^{10\gamma-2}.
            \label{eq:Psi:ft}
      \end{align}
      By setting $s=t_{k-1} $ and $\bar{s}=t$ in \eqref{eq:Psi:gu}--\eqref{eq:Psi:gt}, we obtain the following estimate:
      \begin{align*}
            &~ |R_{g_j}(t_{k-1},U_{t_{k-1}},t,U_t)|^2
            \\\leq&~
            2 \int_0^1
            \big| \left(
            \partial_u g_j
            \big( t_{k-1}+r(t-t_{k-1}),U_{t_{k-1}}+r(U_t-U_{t_{k-1}}) \big)
            -
            \partial_u g_j(t_{k-1},U_{t_{k-1}})
            \right)(U_t-U_{t_{k-1}})\big|^2 \,dr
            \\
            &~+
            2 \int_0^1
            \big| \left(
            \partial_t g_j
            (t_{k-1}+r(t-t_{k-1}),U_{t_{k-1}}+r(U_t-U_{t_{k-1}}))
            -
            \partial_t g_j(t_{k-1},U_{t_{k-1}})
            \right)(t-t_{k-1})\big|^2 \,dr 
            \\\leq&~
            C (1+|X_t|+|X_{t_{k-1}}|)^{3\gamma-1}|X_t-X_{t_{k-1}}|^4
            +
            C\Delta^2 (1+|X_t|+|X_{t_{k-1}}|)^{5\gamma-1} |X_t-X_{t_{k-1}}|^2 
            \\&~+
            C\Delta^2 (1+|X_t|+|X_{t_{k-1}}|)^{7\gamma-3}|X_t-X_{t_{k-1}}|^2
            +
            C\Delta^4 (1+|X_t|+|X_{t_{k-1}}|)^{9\gamma-3}.
      \end{align*}
      Since $7\gamma-1\leq 9\gamma-3<p_1$, Corollary \ref{cor:mixed-moment} and Lemma \ref{lem:bound} yield
      \begin{align*}
            \mathbb{E}\left[|R_{g_j}(t_{k-1},U_{t_{k-1}},t,U_t)|^2\right]
            \leq~
            C\Delta^2 + C\Delta^3 + C\Delta^3 + C\Delta^4
            \leq~
            C\Delta^2 .
      \end{align*}

      Hence, by \eqref{eq:diffusion-remainder-decomposition},
      \begin{align}
            \mathbb{E}[|\mathcal R_k^g|^2]
            \leq&~
            C\int_{t_{k-1}}^{t_k}
            \Delta^2\,dt
            \leq
            C\Delta^3 .
            \label{eq:Rk-g-est}
      \end{align}
      Combining \eqref{eq:Rk-ms-split}, \eqref{eq:Rk-f-est}, and \eqref{eq:Rk-g-est}, we obtain
      \begin{equation}\label{eq:local-Rk-L2}
            \mathbb{E}\left[|\mathcal R_k|^2\right]
            \leq
            C\Delta^3 .
      \end{equation}
      The stochastic-integral and Milstein iterated-integral parts both have zero conditional expectation with respect to $\mathcal F_{t_{k-1}}$. Hence only the two drift residuals remain. Since $\frac12\leq\theta\leq1$,
      \begin{align*}
            &~\mathbb{E}\left[\left|\mathbb{E}\left[\mathcal R_k|\mathcal F_{t_{k-1}}\right]\right|^2\right]
            \\=&~
            \mathbb{E}\left[\left|\mathbb{E}\left[
            \theta\int_{t_{k-1}}^{t_k} \big(f(s,U_s)-f(t_k,U_{t_k})\big)\,ds
            +(1-\theta) \int_{t_{k-1}}^{t_k}\big(f(s,U_s)-f(t_{k-1},U_{t_{k-1}})\big)\,ds
            \,\big|\,\mathcal F_{t_{k-1}}\right]\right|^2\right]
            \\\leq&~
            2\mathbb{E}\left[\left|\mathbb{E}\left[\int_{t_{k-1}}^{t_k}
            \big(f(s,U_s)-f(t_k,U_{t_k})\big)\,ds\,\big|\,\mathcal F_{t_{k-1}}
            \right]\right|^2\right]
            \\&~+
            2\mathbb{E}\left[\left|\mathbb{E}\left[\int_{t_{k-1}}^{t_k}
            \big(f(s,U_s)-f(t_{k-1},U_{t_{k-1}})\big)\,ds
            \,\big|\,\mathcal F_{t_{k-1}}\right]\right|^2\right].
      \end{align*}
      Applying the above expansion with $\phi=f$, for $s\in[t_{k-1},t_k]$ we get
      \begin{align*}
            f(t_k,U_{t_k})-f(s,U_s)
            =&~
            \partial_uf(s,U_s)(U_{t_k}-U_s)
            +
            \partial_tf(s,U_s)(t_k-s)
            +
            R_f(s,U_s,t_k,U_{t_k}).
      \end{align*}
      The required integrability follows from \eqref{eq:red-growth-fu}, \eqref{eq:red-growth-fg}, and Lemma \ref{lem:bound}. More precisely,
      \begin{equation*}
            \int_{t_{k-1}}^{t_k}\int_s^{t_k} 
            \mathbb E\left[ \left| \partial_uf(s,U_s) g(\xi,U_\xi) \right|^2 \right]
            \,\mathrm d\xi\,\mathrm ds < \infty.
      \end{equation*}
      Hence Fubini's theorem for conditional expectations, the tower property, and the martingale property of the It\^o integral give
      \begin{align*}
            &~\mathbb E\left[
            \int_{t_{k-1}}^{t_k}\partial_uf(s,U_s)
            \int_s^{t_k}g(\xi,U_\xi)\,\mathrm dW_\xi\,\mathrm ds
            \,\middle|\,\mathcal F_{t_{k-1}}\right]\\
            =&~\int_{t_{k-1}}^{t_k}\mathbb E\left[
            \int_s^{t_k}\partial_uf(s,U_s)g(\xi,U_\xi)\,\mathrm dW_\xi
            \,\middle|\,\mathcal F_{t_{k-1}}\right]\mathrm ds\\
            =&~\int_{t_{k-1}}^{t_k}\mathbb E\left[
            \mathbb E\left[
            \int_s^{t_k}\partial_uf(s,U_s)g(\xi,U_\xi)\,\mathrm dW_\xi
            \,\middle|\,\mathcal F_s\right]
            \,\middle|\,\mathcal F_{t_{k-1}}\right]\mathrm ds\\
            =&~0.
      \end{align*}
      Therefore, using the expansion above, the $L^2$-contraction property of conditional expectation, and Minkowski's inequality,
      \begin{align}
            &~\Bigg(\mathbb{E}\left[
            \left|
            \mathbb{E}\left[
            \int_{t_{k-1}}^{t_k}
            \big(f(s,U_s)-f(t_k,U_{t_k})\big)\,ds
            \,\big|\,\mathcal F_{t_{k-1}}
            \right]
            \right|^2
            \right]\Bigg)^{1/2}
            \notag\\
            \leq&~
            \int_{t_{k-1}}^{t_k}
            \int_s^{t_k}
            \bigg(\mathbb{E}\left[
            \left|
            \partial_uf(s,U_s)
            f(\xi,U_\xi)
            \right|^2
            \right]\bigg)^{1/2}
            d\xi ds
            +
            \int_{t_{k-1}}^{t_k}
            \bigg(\mathbb{E}\left[
            \left|
            \partial_tf(s,U_s)(t_k-s)
            \right|^2
            \right]\bigg)^{1/2}
            ds
            \notag\\
            &~+
            \int_{t_{k-1}}^{t_k}
            \bigg(\mathbb{E}\left[
            |R_f(s,U_s,t_k,U_{t_k})|^2
            \right]\bigg)^{1/2}
            ds .
            \label{eq:first-drift-cond-root}
      \end{align}
      By \eqref{eq:red-growth-fu}, \eqref{eq:red-growth-fg}, and Lemma \ref{lem:bound}, the first two terms on the right-hand side satisfy
      \begin{gather*}
            \int_{t_{k-1}}^{t_k}\int_s^{t_k}\Bigg(\mathbb{E}\left[
            \left|\partial_uf(s,U_s)f(\xi,U_\xi)\right|^2
            \right]\Bigg)^{1/2}d\xi ds
            \leq
            C\int_{t_{k-1}}^{t_k}\int_s^{t_k}d\xi ds
            \leq C\Delta^2,
            \\
            \int_{t_{k-1}}^{t_k}\Bigg(\mathbb{E}\left[
            \left|\partial_tf(s,U_s)(t_k-s)
            \right|^2\right]\Bigg)^{1/2}ds
            \leq
            C\int_{t_{k-1}}^{t_k}(t_k-s)ds
            \leq 
            C\Delta^2 .
      \end{gather*}
      By setting $\bar{s}=t_k$ in \eqref{eq:Psi:fu}--\eqref{eq:Psi:ft}, we obtain the following estimate:
      \begin{align*}
            &~ |R_f(s,U_s,t_k,U_{t_k})|^2
            \\\leq&~
            2 \int_0^1 \big| \left( \partial_u f
            \big( s+r({t_k}-s),U_s+r(U_{t_k}-U_s) \big)
            -
            \partial_u f (s,U_s)\right) (U_{t_k}-U_s)\Big|^2 \,dr
            \\&~+
            2 \int_0^1 \big| \left( \partial_t f
            (s+r({t_k}-s),U_s+r(U_{t_k}-U_s))
            -
            \partial_t f (s,U_s)\right) ({t_k}-s)\big|^2 \,dr
            \\\leq&~
            C (1+|X_t|+|X_{t_{k-1}}|)^{4\gamma} |X_t-X_{t_{k-1}}|^4 
            +
            C\Delta^2 (1+|X_t|+|X_{t_{k-1}}|)^{6\gamma} |X_t-X_{t_{k-1}}|^2
            \\&~+
            C\Delta^2 (1+|X_t|+|X_{t_{k-1}}|)^{8\gamma-2} |X_t-X_{t_{k-1}}|^2
            +
            C\Delta^4 (1+|X_t|+|X_{t_{k-1}}|)^{10\gamma-2}.
      \end{align*}
      Since $8\gamma \leq 10\gamma-2<p_1$, Corollary \ref{cor:mixed-moment} and Lemma \ref{lem:bound} yield
      \begin{align*}
            &~\mathbb{E}\left[
            |R_f(s,U_s,t_k,U_{t_k})|^2
            \right]
            \leq
            C\Delta^2+C\Delta^3
            +C\Delta^3+C\Delta^4
            \leq
            C\Delta^2 .
      \end{align*}
      Here, the last inequality follows from $0<\Delta\leq\bar\Delta$. Combining this bound with \eqref{eq:first-drift-cond-root} and the two preceding estimates gives
      \begin{equation*}
      \mathbb{E}\left[ \left| \mathbb{E}\left[ \int_{t_{k-1}}^{t_k} \big(f(s,U_s)-f(t_k,U_{t_k})\big)\,ds \,\big|\,\mathcal F_{t_{k-1}} \right] \right|^2 \right] \leq C\Delta^4 .
      \end{equation*}
      The drift residual frozen at $t_{k-1}$ is treated in the same way. For $s\in[t_{k-1},t_k]$,
      \begin{align*}
            f(s,U_s)-f(t_{k-1},U_{t_{k-1}})
            =&~
            \partial_uf(t_{k-1},U_{t_{k-1}})
            (U_s-U_{t_{k-1}})
            +
            \partial_tf(t_{k-1},U_{t_{k-1}})
            (s-t_{k-1})
            \\
            &~+
            R_f(t_{k-1},U_{t_{k-1}},s,U_s).
      \end{align*}
      Repeating the estimates above, with $t_k-s$ replaced by $s-t_{k-1}$, gives
      $\mathbb{E}\left[ |R_f(t_{k-1},U_{t_{k-1}},s,U_s)|^2 \right] \leq C(s-t_{k-1})^2$
      and hence
      \begin{align*}
            &~\mathbb{E}\left[
            \left|
            \mathbb{E}\left[
            \int_{t_{k-1}}^{t_k}
            \big(f(s,U_s)-f(t_{k-1},U_{t_{k-1}})\big)\,ds
            \,\big|\,\mathcal F_{t_{k-1}}
            \right]
            \right|^2
            \right]
            \leq C\Delta^4 .
      \end{align*}
      Combining the two drift conditional estimates and using the zero conditional expectation of the stochastic-integral and iterated-integral parts of \eqref{eq:local-residual}, we obtain
      \begin{equation}\label{eq:local-Rk-cond}
            \mathbb{E}\left[
            \left|
            \mathbb{E}\left[\mathcal R_k|\mathcal F_{t_{k-1}}\right]
            \right|^2
            \right]
            \leq C\Delta^4 .
      \end{equation}
      Combining \eqref{eq:local-Rk-L2}, \eqref{eq:local-Rk-cond}, and Lemma \ref{lem:error-reduction}, we obtain
      \begin{align*}
            \max_{1\leq k\leq K}
            \mathbb{E}\left[|X_{t_k}-x_k|^2\right]
            \leq&~
            C\left(
            \sum_{k=1}^K \Delta^3
            +
            \Delta^{-1}\sum_{k=1}^K \Delta^4
            \right)
            \leq
            C\Delta^2.
      \end{align*}
      This completes the proof.
\end{proof}

\section{Numerical experiments}\label{sec:experiments}
This section presents numerical experiments illustrating the theoretical results. For the convergence experiment, the nonlinear equation at each time step is solved by the Newton--Raphson method with tolerance $10^{-5}$. Since the exact solution is unavailable, the reference solution is generated by \eqref{eq:ABEM} with the fine stepsize $\Delta=2^{-13}$, while the numerical approximations use $\Delta=2^{-i}$ for $i=7,8,9,10,11$. The expectations are estimated from $1000$ Brownian sample paths. To illustrate constraint preservation, we report the Monte Carlo root mean square constraint residual at each time level:
\begin{equation*}
      \operatorname{Res}(t_k) := \left(\frac{1}{M}\sum_{m=1}^{M} \big|RF(t_k,x_k^{(\Delta,m)})\big|^2\right)^{1/2}, \quad k = 0, 1, \cdots, K.
\end{equation*}

Consider the following $\mathbb R^3$-valued index-$1$ SDAE:
\begin{equation}\label{eq:example}
      A_t\diff X_t=F(t,X_t)\diff t+G(t,X_t)\diff W_t, \quad t\in[0,T],
\end{equation}
where $\{W_t\}_{t\in[0,T]}$ is a standard $\mathbb R^3$-valued Brownian motion. Let
\begin{align*}
      &A_t=
      \begin{pmatrix}
            1+t & 0 & 0 \\
            0 & 2+t & 0 \\
            0 &  0& 0
      \end{pmatrix}.
\end{align*}
This matrix is singular, and Assumption \ref{asm:At} holds with $r=2$, $\underline{\sigma}=1$, and $\overline{\sigma}=2+T$. Direct calculation gives
\begin{align*}
      A_t^- &=
      \begin{pmatrix}
            \frac{1}{1+t} & 0 & 0 \\
            0 & \frac{1}{2+t} & 0 \\
            0 & 0 & 0
      \end{pmatrix},
      \quad
      P =
      \begin{pmatrix}
            1 & 0 & 0 \\
            0 & 1 & 0 \\
            0 & 0 & 0
      \end{pmatrix},
      \quad
      R =
      \begin{pmatrix}
            0 & 0 & 0 \\
            0 & 0 & 0 \\
            0 & 0 & 1
      \end{pmatrix}.
\end{align*}

For $x\in\mathbb R^3$ and $t\in[0,T]$, let
\begin{equation*}
      F(t,x)=\left((1+t)(-x_1-x_1^3-x_1^5), (2+t)(-x_2-x_2^3-x_2^5),x_1+x_2+x_3\right)^\top,
\end{equation*}
\begin{align*}
      &G(t,x)=
      \begin{pmatrix}
            0.06(1+t)x_1^2 & 0 & 0 \\
            0 & 0.05(2+t)x_2^2 & 0 \\
            0 & 0 & 0
      \end{pmatrix},
\end{align*}
with columns
\begin{equation*}
      G_1(t,x)=(0.06(1+t)x_1^2,0,0)^\top, \quad G_2(t,x)=(0,0.05(2+t)x_2^2,0)^\top, \quad G_3(t,x)=(0,0,0)^\top.
\end{equation*}
The required derivatives are
\begin{align*}
      &\partial_xF(t,x) =
      \begin{pmatrix}
            (1+t)(-1-3x_1^2-5x_1^4) & 0 & 0 \\
            0 & (2+t)(-1-3x_2^2-5x_2^4) & 0 \\
            1 & 1 & 1
      \end{pmatrix} ,  
\end{align*}
\begin{align*}
      &\partial_tF(t,x)=(-x_1-x_1^3-x_1^5,-x_2-x_2^3-x_2^5,0)^\top,
\end{align*}
\begin{align*}
      &\partial_xG_1(t,x) = 
      \begin{pmatrix}
            0.12(1+t)x_1 & 0 & 0 \\
            0 & 0 & 0 \\
            0 & 0 & 0
      \end{pmatrix},
      \partial_xG_2(t,x) = 
      \begin{pmatrix}
            0 & 0 & 0 \\
            0 & 0.1(2+t)x_2 & 0 \\
            0 & 0 & 0
      \end{pmatrix},
      \partial_xG_3(t,x) = 
      \begin{pmatrix}
            0 & 0 & 0 \\
            0 & 0 & 0 \\
            0 & 0 & 0
      \end{pmatrix},
\end{align*}
and $\partial_tG_1(t,x)=(0.06x_1^2,0,0)^\top, \partial_tG_2(t,x)=(0,0.05x_2^2,0)^\top, \partial_tG_3(t,x)=(0,0,0)^\top$.

Noting that
\begin{align*}
      J(t, x) = A_t+R\partial_xF(t, x) = 
     \begin{pmatrix}
            1+t & 0 & 0 \\
            0 & 2+t & 0 \\
            1 & 1 & 1
      \end{pmatrix}, 
      \quad x \in \mathbb{R}^3, \, t \in [0, T],
\end{align*}
we have
$J(t, x)^{-1} =
      \begin{pmatrix}
            \frac{1}{1+t} & 0 & 0 \\
            0 & \frac{1}{2+t} & 0 \\
            -\frac{1}{1+t} & -\frac{1}{2+t} & 1
      \end{pmatrix}$.
Consequently, the only nonzero Milstein coefficients are
\begin{align*}
      \widetilde{\mathcal{L}}^{1}G_{1}(t,x)
      =
      \begin{pmatrix}
            2(0.06)^2(1+t)x_1^3\\0\\0
      \end{pmatrix},
      \quad
      \widetilde{\mathcal{L}}^{2}G_{2}(t,x)
      =
      \begin{pmatrix}
            0\\2(0.05)^2(2+t)x_2^3\\0
      \end{pmatrix}.
\end{align*}
All cross terms $\widetilde{\mathcal{L}}^{j_1}G_{j_2}$ with $j_1\neq j_2$, as well as all terms involving $j_1=3$ or $j_2=3$, vanish. In particular,
\begin{align*}
      A_t^-\widetilde{\mathcal{L}}^{1}G_{1}(t,x)
      =\bigl(2(0.06)^2x_1^3,0,0\bigr)^\top,
      \quad
      A_t^-\widetilde{\mathcal{L}}^{2}G_{2}(t,x)
      =\bigl(0,2(0.05)^2x_2^3,0\bigr)^\top.
\end{align*}
Assume that $0.06^2(p_1-1)-\frac32\leq0$. Using $m^2+mn+n^2\geq\frac34(m+n)^2$, $mn\leq m^2+n^2$, and $m^4+m^3n+m^2n^2+mn^3+n^4\geq\frac12(m^4+n^4)$ for $m,n\in\mathbb R$, we verify the coupled monotonicity condition for $x,y\in\mathbb R^3$, $t\in[0,T]$, $0<\Delta\leq\Delta_0$ with $\Delta_0=1$, and $q=2$. First, we have
\begin{align*}
      &2\big\langle Px-Py, 
      A_t^- F(t,x) - A_t^- F(t,y) \big\rangle \\
      =&~
      -2\big( x_1-y_1 \big)^2-\ 2 \big( x_1-y_1 \big) \big( x_1^5-y_1^5 \big)-\ 2 \big( x_1-y_1 \big) \big( x_1^3-y_1^3 \big)-2\big( x_2-y_2 \big)^2\\
      &~-\ 2\big( x_2-y_2 \big) \big( x_2^5-y_2^5 \big)  -\ 2\big( x_2-y_2 \big) \big( x_2^3-y_2^3 \big)\\
      \leq&~ - 2 \big( x_1-y_1 \big)^2 \big( x_1^4 +x_1^3 y_1+x_1^2 y_1^2 +x_1 y_1^3+ y_1^4 \big)
      -\ 2 \big( x_2-y_2 \big)^2 \big( x_2^4 +x_2^3 y_2+x_2^2 y_2^2 +x_2 y_2^3+ y_2^4 \big)
      \\
      &~-2 \big( x_1-y_1 \big)^2\big(x_1^2+x_1 y_1 +y_1^2\big)-2 \big( x_2-y_2 \big)^2\big(x_2^2+x_2 y_2 +y_2^2\big)
      +2\big( x_1-y_1 \big)^2+2\big( x_2-y_2 \big)^2\\ 
      \leq&~\ -  \big( x_1-y_1 \big)^2 \big( x_1^4+y_1^4 \big)
      -\  \big( x_2-y_2 \big)^2 \big( x_2^4+y_2^4 \big)-\frac{3}{2}\big( x_1-y_1 \big)^2\big( x_1+y_1 \big)^2 
      \\
      &~-\frac{3}{2}\big( x_2-y_2 \big)^2\big( x_2+y_2 \big)^2+2\big( x_1-y_1 \big)^2+2\big( x_2-y_2 \big)^2,
\end{align*}
and
\begin{align*}
      &(p_1-1)|A_t^- G(t,x) - A_t^- G(t,y)|^2+\frac{q}{2}\Delta\sum_{j_1,j_2=1}^3 \big|A_t^-\widetilde{\mathcal{L}}^{j_1}G_{j_2}(t,x) - A_t^-\widetilde{\mathcal{L}}^{j_1}G_{j_2}(t,y)\big|^2\\
      \leq&~
      (p_1-1) \left( 0.06^2\big( x_1^2-y_1^2 \big)^2+0.05^2\big( x_2^2-y_2^2 \big)^2  \right)+ 0.06^4 4\big( x_1^3-y_1^3 \big)^2+0.05^4 4\big( x_2^3-y_2^3 \big)^2\\
      =&~(p_1-1) \left( 0.06^2\big( x_1-y_1 \big)^2 \big( x_1+y_1 \big)^2+0.05^2\big( x_2-y_2 \big)^2 \big( x_2+y_2 \big)^2 \right) \\
      &~+ 0.06^4 4\big( x_1-y_1 \big)^2\big( x_1^2+x_1 y_1 +y_1^2 \big)^2+0.05^4 4\big( x_2-y_2 \big)^2\big( x_2^2 +x_2 y_2 +y_2^2 \big)^2\\
      \leq&~ 
      (p_1-1) \left( 0.06^2\big( x_1-y_1 \big)^2 \big( x_1+y_1 \big)^2+0.05^2\big( x_2-y_2 \big)^2 \big( x_2+y_2 \big)^2 \right) \\
      &~+ 0.06^4 32\big( x_1-y_1 \big)^2\big( x_1^4 +y_1^4 \big)+0.05^4 32\big( x_2-y_2 \big)^2\big( x_2^4  +y_2^4 \big).
\end{align*}

Combining the preceding estimates, we obtain
\begin{align*}
      &2\big\langle Px-Py, 
      A_t^- F(t,x) - A_t^- F(t,y) \big\rangle 
      +
      (p_1-1)|A_t^- G(t,x) - A_t^- G(t,y)|^2\notag
      \\
      &~+
      \frac{q}{2}\Delta\sum_{j_1,j_2=1}^3 \big|A_t^-\widetilde{\mathcal{L}}^{j_1}G_{j_2}(t,x) - A_t^-\widetilde{\mathcal{L}}^{j_1}G_{j_2}(t,y)\big|^2\notag
      \\
      \leq&~ (0.06^4 32 -1)\big( x_1-y_1 \big)^2\big( x_1^4 +y_1^4 \big)+(0.05^4 32 -1)\big( x_2-y_2 \big)^2\big( x_2^4  +y_2^4 \big)\\
      &~+\big(0.06^2(p_1-1)-\frac{3}{2}\big)\big( x_1-y_1 \big)^2\big( x_1+y_1 \big)^2
      +\big(0.05^2(p_1-1)-\frac{3}{2}\big)\big( x_2-y_2 \big)^2\big( x_2+y_2 \big)^2\\
      &~+2\big( x_1-y_1 \big)^2+2\big( x_2-y_2 \big)^2\\
      \leq&~\ 2 |x-y|^2.
\end{align*}

\begin{figure}[!htbp]
\begin{center}
      \subfigure[$\theta=0.5$]
      {\includegraphics[width=0.3\textwidth]{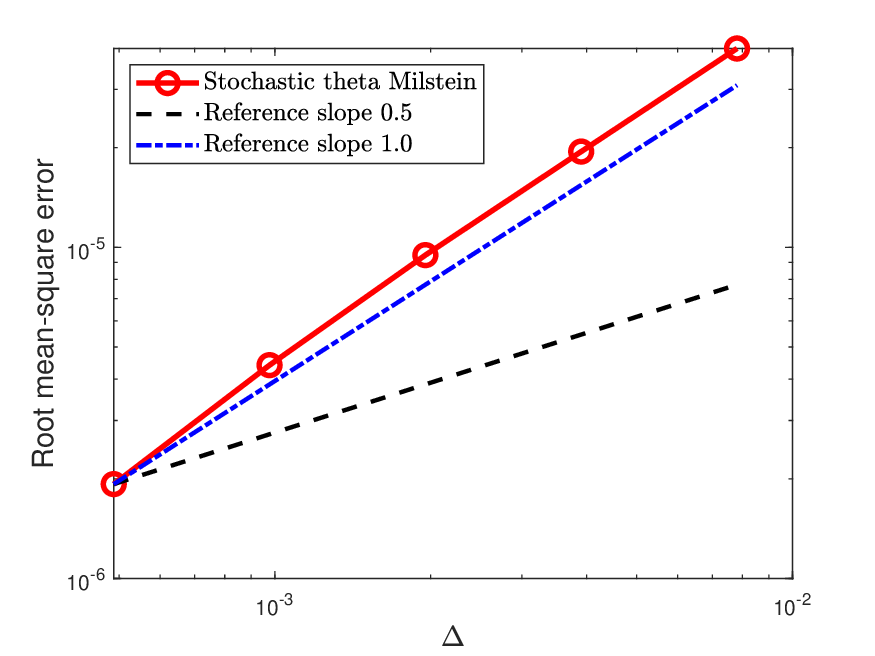}}
      \quad
      \subfigure[$\theta=0.75$]
      {\includegraphics[width=0.3\textwidth]{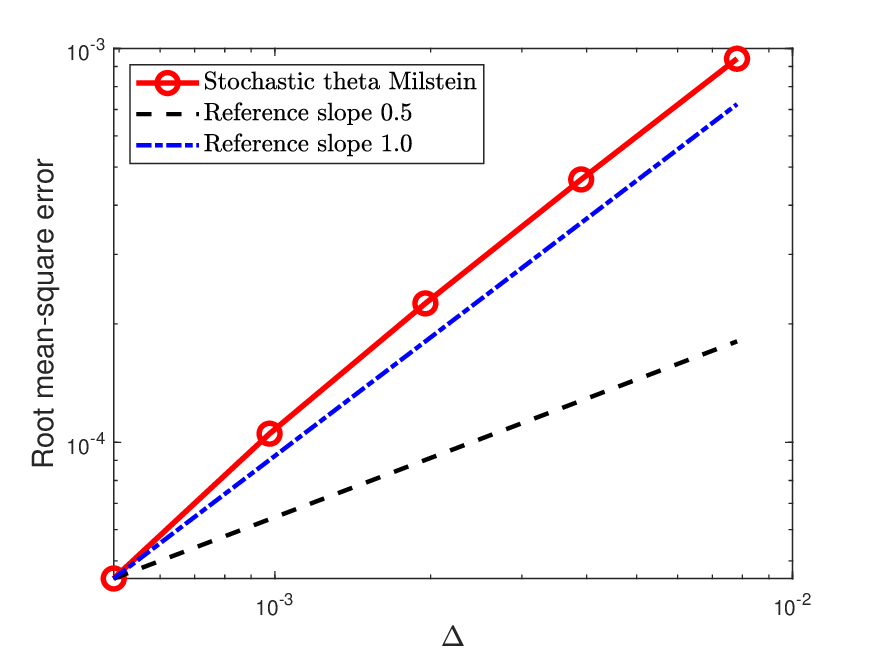}}
      \quad
      \subfigure[$\theta=1$]
      {\includegraphics[width=0.3\textwidth]{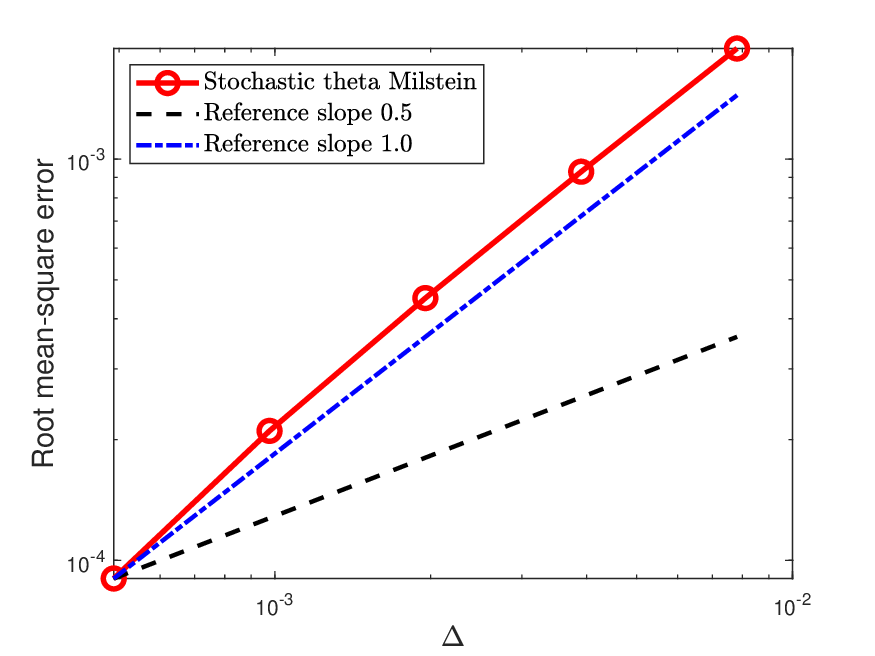}}
      \caption{Root mean square errors of the stochastic theta Milstein method applied to \eqref{eq:example}}
      \label{fig:three}
\end{center}
\end{figure}

For the derivatives of $F$, direct calculation gives
\begin{align*}
      \left| \left( \partial_xF(t,x) 
      - \partial_xF(s,y) \right)  \right|
      =&~\big(\big((1+t)(-1-3x_1^2-5x_1^4)-(1+s)(-1-3y_1^2-5y_1^4)\big)^2\\
      &~+\big((2+t)(-1-3x_2^2-5x_2^4)-(2+s)(-1-3y_2^2-5y_2^4)\big)^2\big)^\frac{1}{2}\\
      \leq&~ C\left((1+|x|+|y|)^3 |x-y|+(1+|x|+|y|)^5|t-s|\right) , 
\end{align*}
and
\begin{align*}
      &\left| \left( \partial_tF(t,x) 
      - \partial_tF(s,y) \right)  \right| \\
      =&~\big(\big(-x_1-x_1^3-x_1^5+y_1+y_1^3+y_1^5\big)^2+\big(-x_2-x_2^3-x_2^5+y_2+y_2^3+y_2^5\big)^2\big)^\frac{1}{2}\\
      \leq&~ C\left((1+|x|+|y|)^4 |x-y|+(1+|x|+|y|)^5|t-s|\right). 
\end{align*}
Similarly, for every $j=1,2,3$,
\begin{align*}
      \left| \left( \partial_xG_j(t,x) 
      - \partial_xG_j(s,y) \right)  \right|^2
      \leq C\left((1+|x|+|y|)^2 |x-y|^2+(1+|x|+|y|)^6|t-s|^2\right), 
\end{align*}
and
\begin{align*}
      \left| \left( \partial_tG_j(t,x) 
      - \partial_tG_j(s,y) \right)  \right|^2
      \leq C\left((1+|x|+|y|)^4 |x-y|^2+(1+|x|+|y|)^6|t-s|^2\right).
\end{align*}
Moreover,
\begin{align*}
      |J(t, x)^{-1}| = \left(\frac{2}{(1+t)^2}+\frac{2}{(2+t)^2} 
     + 1\right)^{1/2} \leq 4.
\end{align*}

The displayed formulas show that $F$ and $G_j$, $j=1,2,3$, are polynomials in the spatial variables with smooth time-dependent factors. Hence they belong to $C^{1,2}([0,T]\times\mathbb R^3)$, and the derivative estimates in Assumption \ref{ass:time-regularity} follow directly from the standard polynomial factorizations. Together with the preceding bounds, this verifies Assumptions \ref{asm:FG} and \ref{ass:time-regularity} with $\gamma=5$, $q=2$, and an initial choice $\Delta_0=1$. Letting $X_0=(0.5,0.4,-0.9)^\top$ gives $RF(0,X_0)=0$, and hence the initial-value condition in Assumption \ref{ass:index1} is satisfied. For the convergence experiment we take $T=1$ and $p_1=49$. Indeed,
\begin{equation*}
      p_1=49>10\gamma-2=48, \quad (0.06)^2(p_1-1)-\frac32 =0.0036\times48-1.5=-1.3272<0.
\end{equation*}

Thus all coefficient and regularity assumptions of Theorem \ref{thm:convAt} are satisfied. With $\bar\Delta$ defined by \eqref{eq:stepsize-threshold}, the theorem applies for every $0<\Delta\leq\bar\Delta$. This restriction is a sufficient theoretical condition and is not asserted to be sharp for the finite stepsizes used below. In Figure \ref{fig:three}, the root mean square error curves are nearly parallel to the reference line of slope one, numerically supporting strong order one in the root mean square norm.

\begin{figure}[!htbp]
\begin{center}
      \subfigure[$T=50$]{\includegraphics[width=0.4\textwidth]{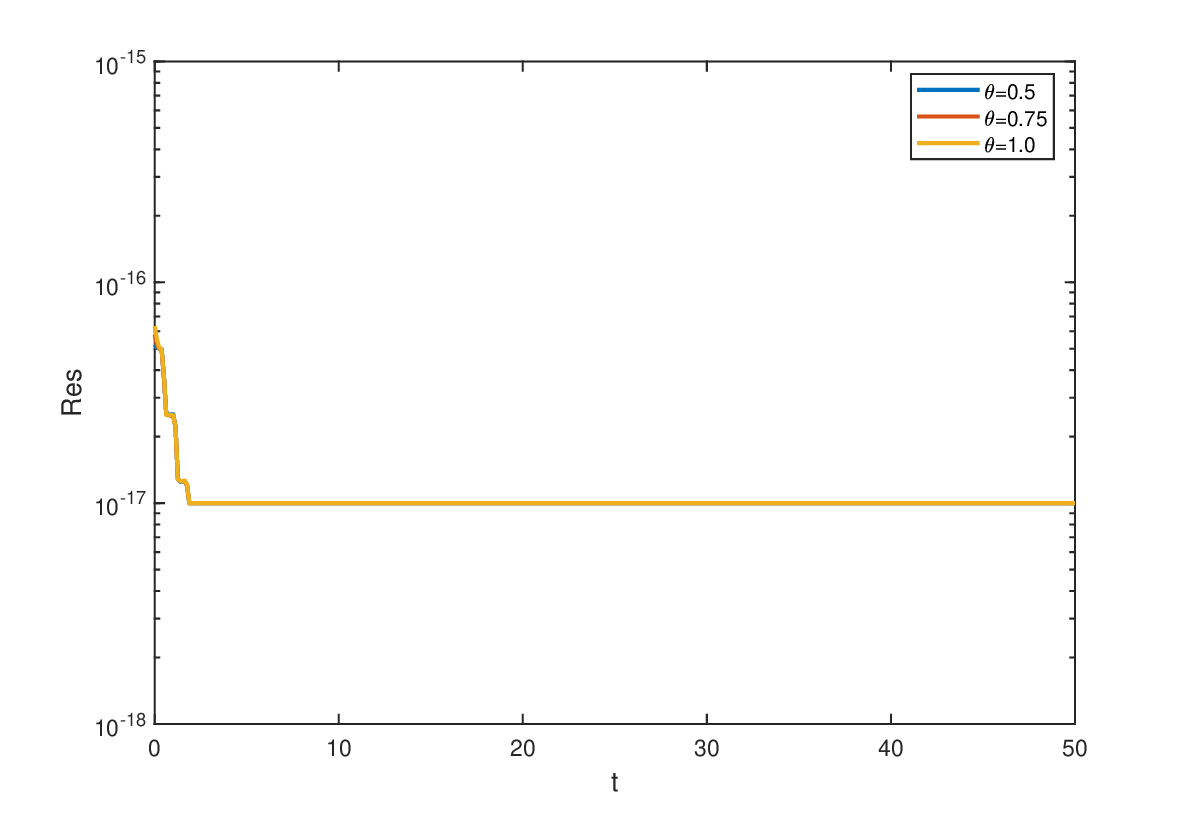}}
      \quad
      \subfigure[$T=100$]{\includegraphics[width=0.4\textwidth]{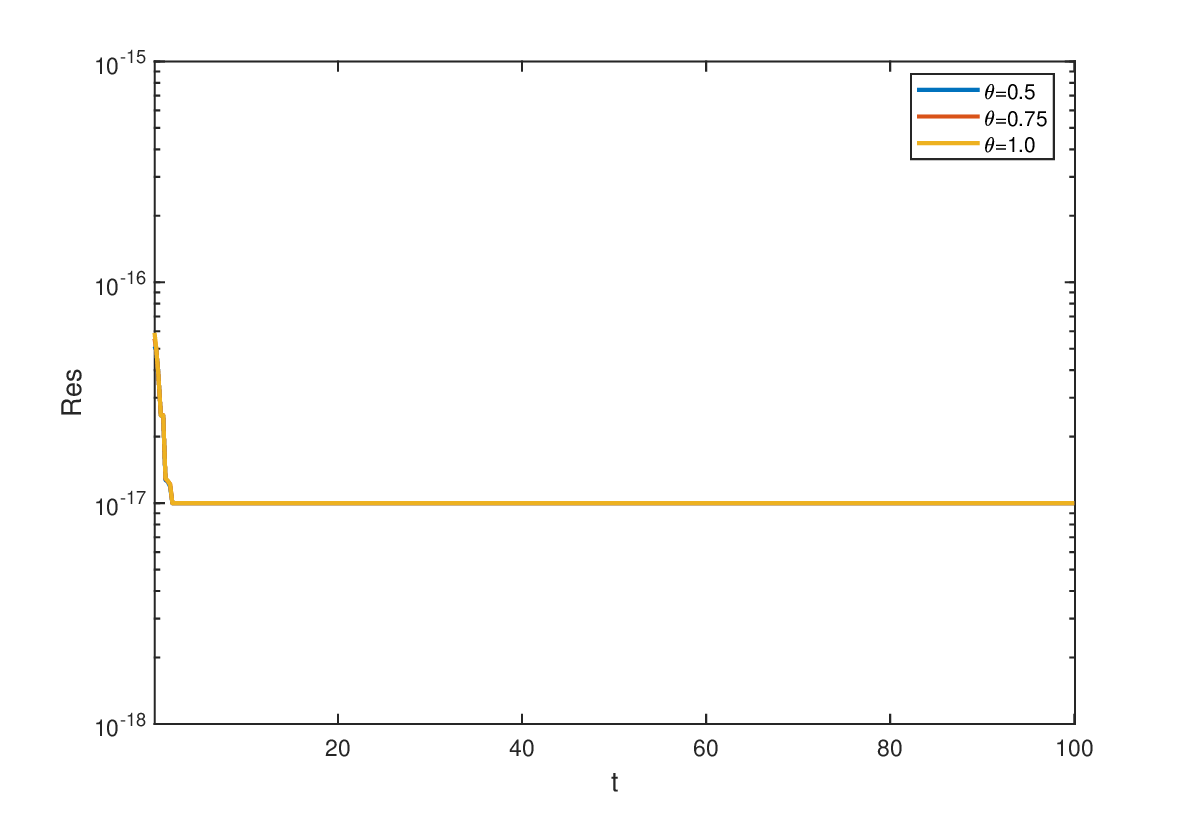}}
      \caption{Structure-preserving simulations for $T=50$ and $T=100$}
      \label{fig:constraint-residual-longtime}
\end{center}
\end{figure}

Figure \ref{fig:constraint-residual-longtime} illustrates the constraint preserving property of the stochastic theta Milstein method. Panels (a) and (b) show the Monte Carlo root mean square residuals of the algebraic constraint for $T=50$ and $T=100$, respectively. These simulations use $\Delta=2^{-8}$ and a Newton tolerance of $10^{-5}$. For every tested value of $\theta$, the residuals remain close to round-off level throughout the simulations, confirming constraint preservation over long time intervals.


\section{Conclusion and future work}

In this paper, we studied the strong convergence of structure-preserving stochastic theta Milstein methods for index-$1$ SDAEs with time-dependent singular matrices and non-globally Lipschitz coefficients. Under a fixed differential algebraic splitting, the algebraic-differential decomposition was used to formulate the method directly in the original SDAE variables and to identify its Milstein correction with that of the reduced SDE. We proved that, for $\theta\in[1/2,1]$ and sufficiently small stepsizes, the method is well posed and preserves the algebraic constraint at every time level. By combining an error-reduction argument with local Milstein expansions and polynomial moment estimates, we further established strong order one in the root mean square norm. The numerical experiments confirmed both the theoretical convergence order and the constraint preserving property, including over long time intervals.

Future work will focus on two directions. The first is to develop higher-order structure-preserving numerical methods for index-$1$ SDAEs with time-dependent singular matrices and non-globally Lipschitz coefficients, and to establish strong convergence orders higher than one. This will require identifying higher-order stochastic correction terms in the original SDAE variables and ensuring their compatibility with the algebraic constraints. The second is to study more general time-dependent singular matrices for which the differential and algebraic subspaces also vary with time. In this setting, the resulting time-dependent projectors and their derivatives introduce additional terms into the reduced dynamics, making both the construction of constraint preserving numerical methods and the corresponding weak convergence analysis substantially more challenging.

\bibliographystyle{plain}  
\bibliography{SDAEsMilsteinrefs}
\end{document}